\theoremstyle{definition}
\newtheorem{defin}{Definition}[section]
\newtheorem{lem}[defin]{Lemma}
\newtheorem{teo}[defin]{Theorem}
\newtheorem*{lemnumber}{Lemma}
\DeclareMathOperator{\ran}{ran}
\newtheoremstyle{TheoremNum}
{\topsep}{\topsep}              
{\itshape}                      
{}                              
{\bfseries}                     
{.}                             
{ }                             
{\thmname{#1}\thmnote{ \bfseries #3}}
\theoremstyle{TheoremNum}
\newtheorem{lemn}{Lemma}
\DeclareMathOperator{\supp}{ \text{supp }}
\newcommand{\void}{\varnothing}
\newcommand{\bs}{\backslash}
\newcommand{\comment}[1]{}
\begin{document}

	\title{On $p$-compact group topologies on direct sums of ${\mathbb Q}$}
	\author[M. K. Bellini]{Matheus Koveroff Bellini}
	\author[V. O. Rodrigues]{Vinicius de Oliveira Rodrigues}
	\author[A. H. Tomita]{Artur Hideyuki Tomita}
	\address{Depto de Matem\'atica, Instituto de Matem\'atica e Estat\'istica, Universidade de S\~ao Paulo, Rua do Mat\~ao, 1010 -- CEP 05508-090, S\~ao Paulo, SP - Brazil}
	\email{tomita@ime.usp.br, matheusb@ime.usp.br, vinior@ime.usp.br}
	\thanks{The first listed author has received financial support from FAPESP 2017/15709-6.}
	\thanks{The second listed author has received financial support from FAPESP 2017/15502-2.}
	\thanks{The third listed author has received financial support from FAPESP 2016/26216-8.}
	\subjclass[2010]{Primary 54D20, 54H11, 22A05; Secondary 54A35, 54G20.}
	\date{}
	\commby{}
	\keywords{Topological group, countable compactness, selective ultrafilter, ${\mathbb Q}$-vector spaces, Wallace's problem}

	\begin{abstract}
		We prove that if $p$ is a selective ultrafilter then ${\mathbb Q}^{(\kappa)}$ has a $p$-compact group topology without non-trivial convergent sequences, for each infinite cardinal $\kappa =\kappa^\omega$. In particular, this gives the first arbitrarily large examples of countably compact groups without non-trivial convergent sequences that are torsion-free.
	\end{abstract}
	
	\maketitle
	
	\section{Introduction}
	
	\subsection{Some history}
	
	Halmos proved that ${\mathbb R}$ can be endowed with a compact group topology, which in particular contains non-trivial convergent sequences \cite{halmos}. Tkachenko and Yashenko showed from Martin's Axiom that ${\mathbb R}$ can be endowed with a countably compact group topology without non-trivial convergent sequences \cite{tkachenko&yaschenko}. Madariaga-Garcia and Tomita assumed the existence of $2^{\mathfrak c}$ selective ultrafilters to obtain a countably compact group topology on the free Abelian group of cardinality $2^{\mathfrak c}$ without non-trivial convergent sequences. The construction does not yield any example of larger cardinality \cite{madariaga-garcia&tomita}.
	
	Fuchs showed that there are no compact free Abelian groups \cite{fuchs1970infinite} and Tomita showed that there are no free Abelian groups whose countable power is countably compact \cite{tomita2}. In particular, there are no $p$-compact free Abelian groups. Using what was called integer stacks, Tomita showed that there exists a group topology in the free Abelian group with ${\mathfrak c}$ generators whose every finite power is countably compact \cite{tomita2015}.
	
	Assuming the existence of a selective ultrafilter $p$ and a  cardinal arithmetic assumption weaker than $GCH$, Castro-Pereira and Tomita classified the torsion groups that admit a countably compact group topology without non-trivial convergent sequences \cite{castro-pereira&tomita2010}. These torsion Abelian groups coincide with
	the torsion Abelian groups that admit a $p$-compact group topology without non-trivial convergent sequences. In particular, it is shown that there exist large countably compact torsion groups without non-trivial convergent sequences.
	
	Comfort called the direct sums of ${\mathbb Q}$ the test space for pseudocompactness for non-torsion groups.	Our aim is to show the result in the abstract. This gives the first example of arbitrarily large torsion free countably compact groups without non-trivial convergent sequences. Also, it follows that ${\mathbb R}$ can be endowed with a $p$-compact group topology without non-trivial convergent sequences whenever $p$ is a selective ultrafilter. 
	
	For the construction in this work, we will adapt the idea of integer stacks in \cite{tomita2015}, used for direct sums of ${\mathbb Z}$'s, to work with direct sums of ${\mathbb Q}$'s. There are some key changes in how the stack is organized and we will give a complete description of rational stacks without assuming knowledge of the previous construction.
	
	We will also use some ideas in \cite{boero&castro-pereira&tomitaoneselective}
	to solve arc equations. 
	
	We notice that this construction uses the fact that the ultrapower of direct sums of ${\mathbb Q}$ is again a ${\mathbb Q}$-vector space. The ultrapower of a free Abelian group is no longer a free Abelian group and as mentioned above, there are no $p$-compact free Abelian groups.

	\subsection{Some basic notation} Fix an infinite cardinal $\kappa$ such that $\kappa=\kappa^\omega$. 
	
	Let $\mathbb T$ be the Abelian group $\mathbb R/\mathbb Z$.
	
	Let $G$ be the Abelian additive group $\mathbb Q^{(\kappa)}=\{g \in \mathbb Q^\kappa: |\supp g|<\omega\}$ and let $H$ be the Abelian additive group $\mathbb Z^{(\kappa)}=\{g \in \mathbb Z^\kappa: |\supp g|<\omega\}$. If $C\subseteq \kappa$, let $\mathbb Q^{(C)}=\{g \in G: \supp g\subseteq C\}$.

	\begin{defin}  
		Given a $\mu \in \kappa$, we denote by $\chi_\mu$  the element of $G$ such that $\supp \chi_\mu = \{\mu\}$ and $\chi_\mu(\mu)=1$. 
		
		Given $\mu \in \kappa$, we define $\vec {\mu}$ the sequence such that $\vec{\mu}(n)=\mu$ for every $n\in \omega$. 
		
		If $A\subseteq \omega$ and $\zeta:\, A \longrightarrow \kappa$ then we define $\chi_\zeta \in G^A$ such that $\chi_\zeta (n)= \chi _{ \zeta(n)}$ for each $n \in \text{dom } \zeta$.
	\end{defin}
	
	\begin{defin} Given $f \in G^A$ and $s=(s_n: n \in A)$ a sequence of rational numbers, we will we denote by $s.f$ or $sf$ the function from $A$ into $G$ such that $(s.f)(n)=s_n.f(n)$, for each $n\in A$.
	\end{defin}
	
	\begin{defin} Given ${\mathcal A} \subseteq G^A$ and $s=(s_n: n \in A)$ a sequence of rational numberss, we define $s\mathcal A=\{sf: f \in \mathcal A\}$. If $s:A\rightarrow \mathbb Q\setminus \{0\}$, we define $\frac{\mathcal A}{s}=\{\frac{1}{s}f: f \in \mathcal A\}$.
	\end{defin}
	
	\smallskip
	
	Given an ultrafilter $q$, we define a equivalence relation on $G^\omega$ by letting $f\simeq_q g$ iff $\{n \in \omega: f(n)=g(n)\} \in q$. We let $[f]_q$ be the equivalence class determined by $f$ and $G^\omega/q$ be $G^\omega/\simeq_q$. Notice that this set has a natural $\mathbb Q$-vector space structure.
	
	A rough idea of  the construction framework is the following:
	\begin{itemize}
		\item 
		
		Define an associated family of stacks (Lemma \ref{Lem.stack.1}).

		\item Solve arc equations in a level of a stack (Lemma \ref{Lem.stack.2}).
		
		\item Find a relation between the arc equations related to a finite sequence of functions and some arc equations in the associated stack (Lemma \ref{Lem.stack.3}).
		
		\item Find the levels of the stacks within an element of the ultrafilter $p$. Solve the arc equations in the stack and build the homomorphism.
		(Proof of Lemma \ref{main.lemma} in Section \ref{proof.of.Main.Lemma}).

	\end{itemize}
	
	\section{Homomorphisms, arc functions and arc equations}

	The way to construct the group topology is through homomorphisms that preserve $p$-compactness for the representatives of a base for the ultrapower $p$. The homomorphisms are constructed by defining smaller and smaller arcs that will give an approximation of the homomorphism. These approximations are arc equations that need to have a solution inside an element of the ultrafilter.
	
	\begin{defin}
		
		Let ${\mathbb B}=\{I+\mathbb Z:I\subset\mathbb R$ is an open interval$\}$ be the family of all the open arcs in ${\mathbb T}$, including ${\mathbb T}$ itself. 
		
		We will call an arc function a function $\phi:\, \kappa \longrightarrow {\mathbb B}$ such that $\{\xi \in \kappa:\, \phi(\xi)\neq {\mathbb T}\}$ is finite. This set will be called the support of the arc function. If the arcs $\phi(\xi)$ have length $\epsilon$ for each $\xi$ in the support of $\phi$, we will call it an $\epsilon$-arc function.
		
		Given two arc functions $\psi$ and $\phi$, we will say that $\psi \leq \phi$ if $\psi(\xi) = \phi(\xi)$ or $\overline {\psi(\xi)}\subseteq \phi(\xi)$, for each $\xi \in \kappa$.
		
		Given an arc function $\phi$ and a positive integer $S$, $S.\phi$ is the arc function with support $\supp \phi$ such that ($S.\phi)(\mu)=S.\phi(\mu)$ for every $\mu \in \kappa$.
	\end{defin}
	
	Below we give the definition of what we will call solutions of an arc equation.
	
	\begin{defin} 
		An arc equation is a quintuple $(\phi, A, \mathcal A, S, U)$ where $\phi$ is an arc function,  $A\subseteq \omega$, ${\mathcal A} \subseteq ({\mathbb Z}^{(\kappa)})^A$, $S$ is a positive integer and $U=(U_f: f \in \mathcal A)$ is a family of elements of ${\mathbb B}$.
		
		Given $n\in A$, an $n$-solution for the arc equation  $(\phi, A, \mathcal A, S, U)$ is an arc function $\psi$ with $S\psi \leq \phi$ such that $\sum_{\mu \in \supp f(n)} f(n)(\mu)\psi(\mu) \subseteq U_f$, for each $f\in {\mathcal A}$. 
	\end{defin}
	
	We will use arc functions as approximations of a homomorphism function in a countable subgroup. The solutions of these equations will depend on the notion of stacks that will be defined later in this work.
	
	The goal is to prove the following:
	\begin{lem}[Main Lemma] 
		Fix a selective ultrafilter $p$.
		Let $\mathcal F\subseteq G^\omega$ be a countable collection of distinct elements mod $p$ such that $\{[f]_p: f \in \mathcal F\} \dot\cup\{ [\chi_{\vec{\mu}}]_p:\, \mu \in \kappa \}$ is $\mathbb Q$-linearly independent in $G^\omega/p$. \label{main.lemma}

		Let $d, d_0,d_1  \in G\setminus \{0\}$ with $\supp d$, $\supp d_0$, $\supp d_1$ pairwise disjoint, and $C$ be a countably infinite subset of $\kappa$ such that $\omega\cup \supp d\cup \supp d_0 \cup \supp d_1\cup \bigcup_{f \in \mathcal F, n \in \omega}\supp f(n) \subseteq C$. For each $f \in \mathcal F$, choose $\xi_f \in C$.
		
		Then there exists a homomorphism $\phi:\, {\mathbb Q}^{(C)}\longrightarrow {\mathbb T}$
		such that
		\begin{enumerate}[label=\alph*)]
			\item $\phi(d)\neq 0$, $\phi(d_0)\neq \phi(d_1)$, and
			\item $p$-$\lim (\phi (\frac{1}{N}.f))=\phi(\frac{1}{N}.\chi_{\xi_f})$, for each $f \in \mathcal F$ and $N\in \omega$.
		\end{enumerate}

	\end{lem}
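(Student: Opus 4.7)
The plan is to construct the desired homomorphism $\phi$ as the limit of a descending chain of arc approximations refined by the stack machinery. Enumerate $\mathcal F=\{f_k:k\in\omega\}$, $C=\{\mu_k:k\in\omega\}$, and the pairs $(f,N)\in\mathcal F\times(\omega\setminus\{0\})$ whose convergence must be secured. I will build a decreasing sequence of arc functions $\phi_0\geq\phi_1\geq\cdots$, together with auxiliary arc functions encoding the choices of $N$-th divisions via $N.\psi\leq\phi_k$, whose supports cover $C$ in the limit and whose arcs at every coordinate shrink to a single point. Setting $\phi(\chi_\mu)$ to the unique element of $\bigcap_k\phi_k(\mu)$ with consistent choices for each $\phi(\frac{1}{N}\chi_\mu)$ then determines a well-defined homomorphism on $\mathbb Q^{(C)}$.

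For the base, I pick $\phi_0$ supported on $\supp d\cup\supp d_0\cup\supp d_1$ with arcs narrow enough that $\sum_{\mu\in\supp d}d(\mu)\phi_0(\mu)$ avoids $0\in\mathbb T$ and the analogous sums for $d_0$ and $d_1$ lie in disjoint arcs; condition (a) is thus locked in and survives every further refinement. At stage $k$, having $\phi_k$ and sets $B_j\in p$ for $j<k$ witnessing convergence to precision $2^{-k+1}$ for $f_j$, I invoke Lemma \ref{Lem.stack.1} to form the stack associated to $(f_0,\ldots,f_k)$ at precision $2^{-k}$. Lemma \ref{Lem.stack.3} translates the simultaneous convergence requirements for $(f_j,N)$ with $j,N\leq k$ into arc equations at individual levels of this stack, and Lemma \ref{Lem.stack.2} solves them, producing $\phi_{k+1}\leq\phi_k$ and a set $B_k\in p$ refining the previous $B_j$. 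The hypothesis that $\{[f_j]_p\}\cup\{[\chi_{\vec\mu}]_p\}$ is $\mathbb Q$-linearly independent in $G^\omega/p$ is precisely what keeps these arc equations non-degenerate and solvable.

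Selectivity of $p$ enters in the diagonalization: since every selective ultrafilter is a $P$-point, the decreasing $B_k$'s admit a pseudo-intersection $B\in p$; and the stronger selector property lets me interleave the level-choices in the stacks coherently, by a standard Mathias-style argument, so that on $B$ the arcs for $\frac{1}{N}f_j(n)$ lie in $2^{-k}$-neighborhoods of $\phi(\frac{1}{N}\chi_{\xi_{f_j}})$ past the appropriate stage, yielding (b).

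The main obstacle is the inductive step: when invoking Lemma \ref{Lem.stack.2} to refine $\phi_k$, I must guarantee that the refinement stays below $\phi_k$ at every coordinate already constrained---both the finitely many pinned down by (a) and those frozen by earlier convergence commitments---while still solving the new arc equations on enough levels of the associated stack that some level falls inside $p$. This is where the rational-stack adaptation of \cite{tomita2015} must do its real work: the additional flexibility from $\mathbb Q$-coefficients, absorbed through the $S.\phi$ operation, has to compensate for the loss of the freeness that made the $\mathbb Z^{(\kappa)}$ argument go through, uniformly across all the active constraints.
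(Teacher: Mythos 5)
Your overall plan matches the paper's strategy: iterate Lemmas~\ref{Lem.stack.1}, \ref{Lem.stack.3}, \ref{Lem.stack.2} to solve arc equations for growing finite pieces of $\mathcal F$, lock in condition (a) at stage zero, and diagonalize with a selectivity argument. But the way you have organized the induction has a genuine structural gap in the diagonalization step, and it is exactly the point you flag as ``the main obstacle'' without actually resolving.

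You propose to build the arc function $\phi_{k+1}$ inside the inductive step, refining $\phi_k$ by solving the arc equations from stage $k$. But Lemma~\ref{Lem.stack.2} gives, for each $n\in B$, a distinct $n$-solution; the actual refinement you want is the one at the specific level $n$ selected by the diagonalization, and that level is not yet known at stage $k+1$ (it depends on all the $B_j$'s, $j\in\omega$). You cannot pick a single $\phi_{k+1}$ at stage $k+1$ that works for every possible level the later diagonalization might choose. The paper avoids this by splitting into two phases that you collapse. First it runs through all $m\in\omega$ producing only the sets $B^m\in p$, the error bounds $\gamma^m_n$, and in particular the arc sizes $\delta^{m+1}$, chosen \emph{before} knowing $r$ so that $\delta^{m+1}\le \frac{1}{2^{m+2}\prod_{i,n\le m+1}K^i_n}\min\{\gamma^t_n: t<n\le m+2,\ n\in B^t\}$ — a lower bound over a bounded window of possible future indices. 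Only then is happiness of $p$ invoked (not merely the $P$-point property; a pseudo-intersection $B$ of the $B_k$'s is not enough, because the condition needed is $r(n+1)\in B^{r(n)}$, which is a diagonal, Ramsey-type property) to produce the trajectory $r$ with $r[\omega]\in p$. Only \emph{after} $r$ is known does the paper construct the arc functions $\varrho^{r(m)}$, refining along the $r$-trajectory, and it is the careful choice of $\delta^{m+1}$ in the first phase that guarantees $\varrho^{r(m+1)}\le\phi^{r(m)}$ is achievable at each step. The homomorphism is then read off as the unique point in $\bigcap_m \frac{Q_m}{Q_{m'}}\varrho^{r(m)}(\xi)$. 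Your proposal cites the right lemmas but needs to reorder the induction in this way to close the loop you correctly identify as the crux.
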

	For the remaining of this section, let $\{ f_\alpha:\, \omega \leq \alpha < \kappa\}$ be an enumeration of $G^{\omega}$ such that
	
	$$\bigcup_{n\in\omega}\supp f_\xi (n) \subseteq \xi\text{, for each }\xi \in [\omega , \kappa).$$
	By applying the Main Lemma, we get the following result:
	\begin{lem} \label{main.lemma.2}
		Fix a selective ultrafilter $p$.
		Let $I \subseteq [\omega , \kappa)$ be such that $\{ [f_\xi]_p:\, \xi \in I\} \cup \{ [\chi_{\vec{\mu}}]_p:\, \mu \in \kappa \}$ is a $\mathbb Q$-basis for $G^\omega/p$.

		Let $d\in G\setminus \{0\}$, $D\in[I]^{<\omega}\setminus\{\void\}$, $r\in G$ of support $D$ and $B\in p$. Let $C$ be a countably infinite subset of $\kappa$ such that $\omega\cup D \cup \supp d \subseteq C$ and 	 $\bigcup_{n \in \omega}\supp f_\xi(n) \subseteq C$ for every $\xi \in C\cap I$.
		
		Then there exists a homomorphism $\phi:\, {\mathbb Q}^{(C)}\longrightarrow {\mathbb T}$
		such that
		\begin{enumerate}[label=\alph*)]
			\item $\phi(d)\neq 0$,
			\item $p$-$\lim (\phi (\frac{1}{N}.f_\xi))=\phi(\frac{1}{N}.\chi_\xi)$, for each $\xi \in I \cap C$ and $N\in \omega$, and
			\item $(\phi(\sum_{\mu\in D}r(\mu)f_\mu(n)):n\in B)$ does not converge.	
		\end{enumerate}
	\end{lem}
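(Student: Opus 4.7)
The plan is to apply the Main Lemma with $\mathcal{F}=\{f_\xi : \xi \in I \cap C\}$ and $\xi_{f_\xi}=\xi$, together with a careful choice of the auxiliary elements $d_0, d_1$ to secure (c). The family $\mathcal{F}$ is countable (since $C$ is), and $\{[f]_p : f \in \mathcal{F}\}\cup\{[\chi_{\vec{\mu}}]_p : \mu\in\kappa\}$ is $\mathbb{Q}$-linearly independent as a subset of the basis assumed in the hypothesis. The hypothesis on $C$ gives $\bigcup_{f \in \mathcal{F},\, n \in \omega} \supp f(n) \subseteq C$, so the Main Lemma applies; its outputs (a), (b) immediately yield (a), (b) of the present lemma.

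For (c), write $r_n := \sum_{\mu \in D} r(\mu) f_\mu(n)$. By linearity of the $p$-limit and (b), any homomorphism $\phi$ satisfying (b) must have $p\text{-}\lim \phi(r_n) = \phi(r)$, so (c) is equivalent to producing infinitely many $n \in B$ with $\phi(r_n)$ at positive distance from $\phi(r)$. A first task is to rule out the case in which $(r_n)$ is constant on some $B_0 \in p$: if $r_n = s$ on $B_0$, then $[(r_n)]_p = [\vec{s}]_p$ would force $\sum_{\mu \in D} r(\mu)[f_\mu]_p = \sum_\nu s(\nu)[\chi_{\vec{\nu}}]_p$, contradicting linear independence of the basis since $r \neq 0$ has support in $I$. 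Applying selectivity of $p$ to the map $n \mapsto r_n \in G$ therefore produces $B^* \in p$, $B^* \subseteq B$, on which $(r_n)$ is injective, and such that $r_n \neq r$. Splitting $B^*$ into a $p$-set and an infinite non-$p$-set $T$, I enumerate $T = \{n_k : k \in \omega\}$: the $r_{n_k}$ are distinct elements of $G$ all different from $r$, with $T \subseteq B$.

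The remaining step is to choose $d_0, d_1$ so that the inequality $\phi(d_0) \neq \phi(d_1)$ coming from the Main Lemma forces non-convergence. The natural first attempt — pick $n_0 \in T$ and, after modifying by fresh basis elements in $C$ to ensure pairwise disjointness of $\supp d, \supp d_0, \supp d_1$, take $d_0 - d_1$ so as to code $r_{n_0} - r$ — yields only a single witness $\phi(r_{n_0}) \neq \phi(r)$. The main obstacle is to upgrade this to the infinitely many witnesses required for non-convergence along $B$. I expect the resolution to draw on the arc-equation framework behind the Main Lemma's proof rather than on the bare statement: by making the arcs approximating $\phi(r)$ and the arcs approximating each $\phi(r_{n_k})$ keep a fixed positive gap during the inductive construction (rather than merely being unequal), the single inequality in the Main Lemma's conclusion propagates, via the rational rescalings present in (b) and the selectivity-governed partition of $B^*$ into $T$ and a $p$-set, to a uniform lower bound $|\phi(r_{n_k}) - \phi(r)| > \varepsilon$ for all $k \in \omega$. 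This is the technical heart of the argument, and the one place where a purely black-box application of the Main Lemma does not appear to suffice.
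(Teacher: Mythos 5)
Your reduction is fine as far as it goes: you correctly take $\mathcal F = \{f_\xi : \xi \in I\cap C\}$ for (a) and (b), correctly note that (b) forces $p$-$\lim\phi(r_n) = \phi(r)$, and correctly observe (via linear independence and $r\neq 0$) that $(r_n)$ is injective on some $B^*\in p$. But you stop at exactly the hard point and conjecture that one must reopen the arc-equation machinery. You do not need to: the paper completes the argument with a genuinely black-box use of the Main Lemma, and the idea you are missing is the choice of \emph{what to add to $\mathcal F$}.

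The trick is this. On a $p$-set $B'\subseteq B$ where $n\mapsto r_n$ is injective, fix an almost disjoint family $\mathbb A$ on $B'$ of size $\mathfrak c$. For each $x\in\mathbb A$ let $h_x:\omega\to\{r_n : n\in x\}$ be a bijection; each $h_x$ is an element of $G^\omega$ that re-enumerates a subsequence of $(r_n : n\in B)$. Almost disjointness gives $[h_{x}]_p\neq[h_{x'}]_p$ for $x\neq x'$, so $\{[h_x]_p : x\in\mathbb A\}$ has size $\mathfrak c$ and one can extract $J\subseteq\mathbb A$ with $|J|=\mathfrak c$ and $([h_x]_p : x\in J)$ linearly independent; then, since the span of $\{[f_\xi]_p : \xi\in I\cap C\}\cup\{[\chi_{\vec\mu}]_p:\mu\in C\}$ is countable, there are $x_0,x_1\in J$ such that adding $[h_{x_0}]_p,[h_{x_1}]_p$ preserves independence over the whole basis. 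Now enlarge $\mathcal F$ to $\{f_\xi:\xi\in I\cap C\}\cup\{h_{x_0},h_{x_1}\}$, assign $\xi_{h_{x_i}}=m_i$ for distinct fresh $m_0,m_1\in\omega\setminus\supp d$, and set $d_0=\chi_{m_0}$, $d_1=\chi_{m_1}$. The Main Lemma then simultaneously delivers $\phi(d_0)\neq\phi(d_1)$ \emph{and} $p$-$\lim\phi(h_{x_i}(k))=\phi(\chi_{m_i})$. Since each $(\phi(h_{x_i}(k)))_k$ is a reordering of a subsequence of $(\phi(r_n):n\in B)$, the latter has two distinct accumulation points $\phi(\chi_{m_0})\neq\phi(\chi_{m_1})$ and cannot converge. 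So the ``infinitely many witnesses'' you were trying to manufacture by hand come for free from the $p$-limits of the two added functions; there is no need to track arc gaps inside the construction.

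One further caution: the splitting of $B^*$ into a $p$-set and an infinite non-$p$-set $T$ in your proposal does not by itself do anything, because a uniform lower bound $|\phi(r_{n_k})-\phi(r)|>\varepsilon$ on the indices in $T$ would not preclude convergence of $(\phi(r_n):n\in B)$ unless $T$ is cofinal in $B$ in the right sense, and more importantly the Main Lemma gives no leverage over values of $\phi$ at individual indices; it only controls $p$-limits. That is precisely why the paper packages a whole subsequence into a single $h_x\in G^\omega$ and lets the Main Lemma control its $p$-limit.
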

	
	\begin{proof} Let $B' \in p$ a subset of $B$ such that  $(\sum_{\mu\in D}r(\mu)f_\mu(n):n\in B')$ is a 1-1 sequence. Let ${\mathbb A}$ be an almost disjoint family on $B'$ of cardinality ${\mathfrak c}$ and $h_x:\, \omega \longrightarrow \{\sum_{\mu \in D}r(\mu)f_\mu(n):\, n\in x\}$ be a bijection for each $x \in \mathbb A$. \\

		\textbf{Claim}: There exist $x_0,x_1 \in {\mathbb A}$ such that $\{ [f_\xi]_p:\, \xi \in C\cap I\} \cup \{ [\chi_{\vec{\mu}}]_p:\, \mu \in \kappa \}\cup \{[h_{x_0}]_p,[h_{x_1}]_p\} $ is a linearly independent subset.\\
		
		\textbf{Proof of the claim:} Given $x_0, x_1 \in \mathbb A$, notice that $h_{x_1}(n)\neq h_{x_2}(n)$ for all but a finite numbers of $n$'s, so $[h_{x_1}]_p\neq [h_{x_2}]_p$. Since $\mathbb Q$ is countable, it follows that $\langle [h_{x}]_p: x \in \mathbb A\rangle$ has cardinality $\mathfrak c$, so there is $J\subseteq\mathbb A$ such that $|J|=\mathfrak c$ and that $([h_x]_p: x \in J)$ is linearly independent. Now notice that $\langle f_{\xi}: \xi \in I\cap C\rangle \oplus \langle\chi_{\vec\xi}: \xi \in C\rangle$ is countable, so there exists $x_0, x_1 \in J$ such that $\{ [f_\xi]_p:\, \xi \in C\cap I\} \cup \{ [\chi_{\vec{\mu}}]_p:\, \mu \in C \}\cup \{[h_{x_0}]_p,[h_{x_1}]_p\} $ is linearly independent. Since all the supports of these elements are contained in $C$, it is straightforward to see that $\{ [f_\xi]_p:\, \xi \in C\cap I\} \cup \{ [\chi_{\vec{\mu}}]_p:\, \mu \in \kappa \}\cup \{[h_{x_0}]_p,[h_{x_1}]_p\} $ is a linearly independent. \qed\\
		
		Let ${\mathcal F}=\{ f_\xi:\, \xi \in C\cap I\} \cup \{h_{x_0}, h_{x_1}\}$. Set $\xi _f=\mu$ if $f=f_\mu$ for some $\mu \in C\cap I$ and $\xi_f= m_i$ if $f=h_{x_i}$ for some $i<2$ where $m_0\neq m_1$ and $m_0,m_1 \in \omega \setminus \supp d$. Let $d_0=\chi_{m_0}$, $d_1=\chi_{m_1}$ and $\phi$ be as in Lemma \ref{main.lemma}.
		
		Clearly conditions a) and b) of Lemma \ref{main.lemma.2} are satisfied. 
		
		Furthermore, 
		$(\phi(h_{x_i}(k)):k\in \omega)$ has $\phi(\chi_{m_i})$ as an accumulation point for $i<2$. Since these sequences are reorderings of a subsequence of 
		$(\phi(\sum_{\mu\in D}r(\mu)f_\mu(n)):n\in B)$ and  $\phi(\chi_{m_0})\neq \phi(\chi_{m_1})$, it follows that c) is satisfied.
	\end{proof}

	Finally, we may extend the above homomorphism to $G$:
	
	\begin{lem} \label{main.lemma.revisited}
		Fix a selective ultrafilter $p$.
		Let $I \subseteq [\omega , \kappa)$ be such that $\{ [f_\xi]_p:\, \xi \in I\} \cup \{ [\chi_{\vec{\mu}}]_p:\, \mu \in \kappa \}$ is a $\mathbb Q$-basis for $G^\omega/p$.

		Let $d \in G\setminus \{0\}$, $D\in[I]^{<\omega}\setminus\{\void\}$, $r:D\to\mathbb Q\setminus \{0\}$ and $B\in p$.
		
		Then there exists a homomorphism $\phi:\, G\longrightarrow {\mathbb T}$
		such that
		\begin{enumerate}[label=\alph*)]
			\item $\phi(d)\neq 0$,
			\item $p$-$\lim \phi( \frac{1}{N}.f_\xi)=\phi(\frac{1}{N}.\chi_\xi)$, for each $\xi\in I$ and $N\in \omega$, and
			\item $(\phi(\sum_{\mu\in D}r(\mu)f_\mu(n)):n\in B)$ does not converge.
		\end{enumerate}
	\end{lem}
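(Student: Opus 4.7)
The plan is to apply Lemma~\ref{main.lemma.2} to obtain a homomorphism on $\mathbb Q^{(C)}$ for a suitable countable $C\subseteq\kappa$, and then extend it to all of $G$ by transfinite induction, using the divisibility of $\mathbb T$ (so extensions always exist) together with the compactness of $\mathbb T$ (so every $p$-limit exists in $\mathbb T$).

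First, I build $C$ by starting with $\omega\cup D\cup\supp d$ and closing under $\mu\mapsto\bigcup_n\supp f_\mu(n)$ for $\mu\in I$, iterating $\omega$ times. The result is countable and satisfies the closure hypothesis $\bigcup_n\supp f_\xi(n)\subseteq C$ for every $\xi\in I\cap C$, so Lemma~\ref{main.lemma.2} yields $\phi_0:\mathbb Q^{(C)}\to\mathbb T$ satisfying a), c), and b) for each $\xi\in I\cap C$ (identifying $r$ with the element of $G$ of support $D$).

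Next, enumerate $I\setminus C=\{\xi_\alpha:\alpha<\beta\}$ in strictly increasing ordinal order and, by transfinite recursion, build an increasing chain of homomorphisms $\phi_\alpha:\mathbb Q^{(C_\alpha)}\to\mathbb T$ extending $\phi_0$, where
\[
C_\alpha=C\cup\{\xi_\gamma:\gamma<\alpha\}\cup\bigcup_{\gamma<\alpha}\bigcup_n\supp f_{\xi_\gamma}(n),
\]
each satisfying b) for every $\xi\in(I\cap C)\cup\{\xi_\gamma:\gamma<\alpha\}$. At a successor step $\alpha\to\alpha+1$, let $E_\alpha=\bigcup_n\supp f_{\xi_\alpha}(n)\setminus C_\alpha$ and extend $\phi_\alpha$ arbitrarily to $\mathbb Q^{(C_\alpha\cup E_\alpha)}$ using divisibility of $\mathbb T$. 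Because $\bigcup_n\supp f_{\xi_\alpha}(n)\subseteq\xi_\alpha$ and the enumeration is strictly increasing, $\xi_\alpha\notin C_\alpha\cup E_\alpha$, so we may freely set $\phi_{\alpha+1}(\chi_{\xi_\alpha}/N):=p\text{-}\lim\phi(f_{\xi_\alpha}(n)/N)$ for every positive integer $N$. Take unions at limit stages.

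The values $y_N:=p\text{-}\lim\phi(f_{\xi_\alpha}(n)/N)$ form a coherent system of $N$-th roots in $\mathbb T$---indeed $My_{MN}=M\cdot p\text{-}\lim\phi(f/(MN))=p\text{-}\lim\phi(f/N)=y_N$ by continuity of multiplication by $M$ on $\mathbb T$---so they define a homomorphism $\mathbb Q\chi_{\xi_\alpha}\to\mathbb T$, making $\phi_{\alpha+1}$ well-defined. Condition b) for $\xi\in I\cap C_\alpha$ handled previously is preserved at later stages because the supports of $f_\xi(n)$ all lie in $C_\alpha$ (using either closure of $C$ or strict monotonicity of the enumeration), so the relevant values of $\phi$ never change. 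Since $I\subseteq C_\beta$, after the recursion we extend $\phi_\beta$ arbitrarily to $\phi:G\to\mathbb T$ via divisibility. Conditions a) and c) depend only on values of $\phi$ on $\mathbb Q^{(C)}$ and are therefore preserved throughout. The main technical point is the coherence of the $\{y_N\}$ system, which is immediate from continuity of integer multiplication on $\mathbb T$; the rest is routine bookkeeping of the recursion.
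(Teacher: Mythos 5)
Your proposal is correct and follows essentially the same route as the paper: apply Lemma~\ref{main.lemma.2} to get the homomorphism on a countable closed set $C$, then extend by transfinite recursion one coordinate at a time, defining $\phi$ on $\chi_{\xi_\alpha}/N$ as the $p$-limit of $\phi(f_{\xi_\alpha}(n)/N)$ and using continuity of integer multiplication on $\mathbb T$ to get coherence. The only cosmetic difference is that the paper enumerates all of $\kappa\setminus C$ in increasing order (so that $C_\alpha$ automatically absorbs the supports $\supp f_{\xi_\alpha}(n)\subseteq\xi_\alpha$ and $C_\kappa=\kappa$ with no final divisibility step), whereas you enumerate only $I\setminus C$, explicitly adjoin the support sets $E_\alpha$, and finish with one more extension by divisibility; both variants work.
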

	
	\begin{proof}
		Let $C$ be a countably infinite subset of $\kappa$ such that $\omega \cup \supp d \subseteq C$ and 	 $\bigcup_{n \in \omega}\supp f_\xi(n) \subseteq C$ for every $\xi \in C\cap I$. Such a $C$ exists by standard closing off arguments. Let $(\xi_\alpha: \alpha<\kappa)$ be a strictly increasing enumeration of $\kappa\setminus C$. Let $\phi$ be as in the Main Lemma.
		
		For each $\alpha<\kappa$, let $C_\alpha=C\cup\{\xi_\beta: \beta<\alpha\}$ (so $C_0=C$ and $C_\kappa=\kappa$). Notice that for each $\alpha$ and $n \in \omega$, $\supp f_{\xi_\alpha}(n)\subseteq \xi_\alpha\subseteq C_\alpha$.
		
		Recursively we define homomorphisms $\phi_\alpha:\mathbb Q^{(C_\alpha)}\rightarrow \mathbb T$ for $\alpha\leq \kappa$ satisfying:
		
		\begin{enumerate}[label=\alph*)]
			\item $\phi_0=\phi$,
			\item $\phi_\beta\subseteq \phi_\alpha$ whenever $\beta\leq\alpha\leq\kappa$, and
			\item $p$-$\lim \phi_\alpha (\frac{1}{N}.f_\xi)=\phi(\frac{1}{N}.\chi_\xi)$, for each $\xi\in I\cap C_\alpha$ and $N\in \omega$.
		\end{enumerate}
		
		We let $\phi_0=\phi$. For the limit step, just take unions. For the successor step $\alpha+1$ we proceed as follows:
		
		Notice that $\mathbb Q^{(C_{\alpha+1})}=\mathbb Q^{(C_\alpha)}\oplus \{q\chi_{\xi_\alpha}: q \in \mathbb Q\}$.
		
		First, we define $\tilde \phi_{\alpha}:\{q\chi_{\xi_\alpha}: q \in \mathbb Q\}\rightarrow \mathbb T$ by letting $\tilde\phi_\alpha(\frac{M}{N}\chi_{{\xi_\alpha}})=M(p$-$\lim \phi_{\xi_\alpha} (\frac{1}{N}.f_{\xi_\alpha}))$· Since multiplying a group element by an integer is a continuous function and since $\phi_\alpha$ is a homomorphism, it follows that $\tilde\phi_\alpha$ is well-defined and a group homomorphism. Now let $\phi_{\alpha+1}=\phi_\alpha\oplus\tilde\phi_{\alpha}$.
		
		The required homomorphism is $\phi_\kappa$.
		
	\end{proof}
	
	From this lemma follows the result in the title:
	
	\begin{teo} Assume that $p$ is a selective ultrafilter and $\kappa=\kappa^\omega$ is an infinite cardinal. Then, there exists a $p$-compact group topology on $G={\mathbb Q}^{(\kappa)}$ without non-trivial convergent sequences. 
	\end{teo}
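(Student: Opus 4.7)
The plan is to equip $G$ with the initial topology $\tau$ induced by a carefully chosen family $\Phi$ of homomorphisms $G\to\mathbb{T}$, each produced by Lemma~\ref{main.lemma.revisited}. First, using Zorn's lemma, fix $I\subseteq[\omega,\kappa)$ such that $\{[f_\xi]_p:\xi\in I\}\cup\{[\chi_{\vec\mu}]_p:\mu\in\kappa\}$ is a $\mathbb{Q}$-basis of $G^\omega/p$. Then, for every quadruple $(d,D,r,B)$ with $d\in G\setminus\{0\}$, $D\in[I]^{<\omega}\setminus\{\void\}$, $r:D\to\mathbb{Q}\setminus\{0\}$ and $B\in p$, invoke Lemma~\ref{main.lemma.revisited} to obtain a homomorphism $\phi_{d,D,r,B}:G\to\mathbb{T}$ satisfying clauses (a)--(c), and let $\Phi$ be the collection of all these. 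Define $\tau$ as the topology on $G$ pulled back from the product topology on $\mathbb{T}^\Phi$ via the diagonal embedding $g\mapsto(\phi(g))_{\phi\in\Phi}$.

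Being an initial topology induced by group homomorphisms, $\tau$ is a group topology; it is Hausdorff because clause (a) ensures that every $d\in G\setminus\{0\}$ is separated from $0$ by some $\phi\in\Phi$. For $p$-compactness, given an arbitrary $g\in G^\omega$, use the basis to write $[g]_p=\sum_{i=1}^k q_i[f_{\xi_i}]_p+\sum_{j=1}^\ell r_j[\chi_{\vec{\mu}_j}]_p$ and set $z=\sum_i q_i\chi_{\xi_i}+\sum_j r_j\chi_{\mu_j}\in G$. On the $p$-large set realizing this basis equality, $g_n=\sum_i q_i f_{\xi_i}(n)+\sum_j r_j\chi_{\mu_j}$. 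Writing $q_i=M_i/N_i$ and invoking clause (b) for each $\xi_i\in I$, compute $p$-$\lim_n \phi(q_i f_{\xi_i}(n))=M_i\phi(\frac{1}{N_i}\chi_{\xi_i})=\phi(q_i\chi_{\xi_i})$ for every $\phi\in\Phi$; summing and adding the constant part yields $p$-$\lim\phi(g_n)=\phi(z)$ uniformly in $\phi$, that is, $p$-$\lim g_n=z$ in $\tau$.

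To rule out non-trivial convergent sequences, suppose $(g_n)$ is one-to-one and converges to $y\in G$; by translating, assume $y=0$. Decomposing $[g]_p$ as above, if every $q_i=0$ then $g_n$ equals the constant $y_0=\sum_j r_j\chi_{\mu_j}$ on a $p$-set, and since $g_n\to 0$ while $\Phi$ separates points, $y_0=0$; hence $g_n=0$ on a $p$-set, contradicting one-to-one together with the freeness of $p$. Otherwise, letting $D=\{\xi_i:q_i\neq 0\}$, $r(\xi_i)=q_i$ on $D$, and $B_0\in p$ the set where $g_n=\sum_i q_i f_{\xi_i}(n)+y_0$, clause (c) of Lemma~\ref{main.lemma.revisited} applied to $(d,D,r,B_0)$ (for any $d\neq 0$) furnishes a $\phi\in\Phi$ for which $(\phi(\sum_{\xi\in D}r(\xi)f_\xi(n)):n\in B_0)$ does not converge; since $\phi(g_n)-\phi(y_0)$ coincides with this sequence on $B_0$, $(\phi(g_n))$ cannot converge, contradicting $g_n\to 0$ in $\tau$.

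The main obstacle is the $p$-limit computation in the $p$-compactness step: clause (b) only provides $p$-limits of the building blocks $\frac{1}{N}.f_\xi$, and one must reassemble an arbitrary $g\in G^\omega$ from these, with rational coefficients, to exhibit a concrete limit point $z\in G$. The crucial feature is that each $\phi\in\Phi$ satisfies (b) uniformly for all $\xi\in I$ and all $N\in\omega$, which forces the value of $p$-$\lim g_n$ to be independent of $\phi$ and thus representable as $\phi(z)$ for a common $z\in G$.
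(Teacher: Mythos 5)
Your proposal follows essentially the same route as the paper: fix a basis set $I$, build the family $\{\phi_{d,D,r,B}\}$ from Lemma~\ref{main.lemma.revisited}, take the initial topology, use clause (a) for Hausdorffness, clause (b) and the basis decomposition for $p$-compactness, and clause (c) to kill one-to-one convergent sequences. The only differences are cosmetic extra care — explicitly splitting off the case $D=\void$ (which in the paper is silently excluded because a $1$-$1$ sequence cannot be $p$-often constant) and spelling out the rational-scaling step in the $p$-limit computation — so the argument matches the paper's.
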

	
	\begin{proof} Let $p$ be a selective ultrafilter, let $I$ be as in the previous theorem. For each $d \in G\setminus \{0\}$,$D\in[I]^{<\omega}\setminus\{\void\}$, $r:D\to\mathbb Q\setminus \{0\}$ and $B\in p$, let $\phi_{d,D,r,B}:G\rightarrow \mathbb T$ be as in the previous theorem.
		
		The group topology induced by these homomorphisms is such that the $p$-$\lim(\frac{1}{N}.f_\xi)=\frac{1}{N}.\chi_\xi$, for each $\xi \in I$ and $N \in \omega$.
		
		If $h$ is any element of $G^\omega$, there exist families $(r_\xi: \xi \in I)$ and $(s_\mu: \mu \in \kappa)$ of rational numbers where all but a finite quantity of them is $0$ such that: 
		
		$[h]_p= \sum_{ \xi \in I} r_\xi [f_\xi]_p+\sum_{\mu \in \kappa}s_\mu [\chi_{\vec{\mu}}]_p$. Then $\sum_{ \xi \in I} r_\xi .\chi_\xi+\sum_{\mu \in \kappa}s_\mu . \chi_\mu$ is the $p$-limit of $h$. Therefore, $G$ is $p$-compact.
		
		To check that there are no non-trivial convergent sequences, fix a 1-1 sequence $g$. Let $(r_\xi: \xi \in I)$ and $(s_\mu: \mu \in \kappa)$ be families of rational numbers where all but a finite quantity of them is $0$ such that: 
		
		$[g]_p= \sum_{ \xi \in I} r_\xi [f_\xi]_p+\sum_{\mu \in \kappa}s_\mu [\chi_{\vec{\mu}}]_p$. Then there are $B\in p$ and $D\in[I]^{<\omega}\setminus\{\void\}$ such that: $g(n)= \sum_{ \xi \in D} r_\xi f_\xi(n)+\sum_{\mu \in \kappa}s_\mu \chi_\mu$, for all $n\in B$.
		By Lemma \ref{main.lemma.revisited}(c), we have that $(\phi_{d,D,r,B}(\sum_{\xi\in D}r_\xi f_\xi(n)):n\in B)$ does not converge in $\mathbb T$, and so $(\sum_{\xi\in D}r_\xi f_\xi(n):n\in B)$ does not converge in $G$. Since 
		$\sum_{\mu \in \kappa}s_\mu \chi_\mu$ is constant, it follows that $(g(n):n\in B)$ does not converge,and therefore $g$ does not converge.
	\end{proof}

	\section{A preliminar discussion on rational stacks}

	\medskip
	
	We will start this section with an informal discussion about rational stacks. A rational stack will be a nonuple $\langle   \mathcal B, \nu, \zeta, K, A, k_0, k_1, l, T\rangle$, where:
	\begin{itemize}
		\item $A\subseteq \omega$ is infinite,
		\item $k_0\leq k_1$ are natural numbers with $k_1>0$,
		\item $l:k_1\rightarrow \omega$,
		\item $\nu:k_0\rightarrow \kappa$,
		\item $\zeta:k_1\rightarrow \kappa^\omega$,
		\item $K:A\rightarrow \omega\setminus 2$ is such that for every $n \in A$, $n!T\mid K_n$,
		\item $\mathcal B=({{\mathcal B}}_{i,j} :\, i< k_1, j < l_i)$ is such that each $\mathcal B_{i, j}\subseteq H^\omega$ is finite.
		\item $T>0$ is an integer.
	\end{itemize}
	
	In order to be a rational stack, this nonuple must satisfy additional properties. The full definition of rational stack will be given in Section 5. This definition was designed to solve arc equations to construct the homomorphisms we want.

	Roughly speaking, we associate each finite family of sequences to a stack. We transform the arc equations associated to this finite family to arc equations associated to the stack, solve the arc equation using the properties of the stack then return to a solution of the original arc equations. We want to solve infinitely many equations, thus, this process is made back and forth, including more equations at each step. At each stage, the stack is different  and there is no containment relation between the stacks, even though we use a larger finite subfamily of sequences.
	
	The idea is that before each iteration we have arc equations related to a certain arc size. We might need to shrink the arcs when we write the arc equations related to the stack so that the solution of arc equation associated to the stack can be transformed into a solution of the original equations. After each step, the output arcs will be smaller and will be the input arcs for the next iteration.  We have to estimate the size of the output arcs according to its input arc so that the sets of arc equations can be solved ultrafilter often. For this, we use the happiness of the selective ultrafilter.
	
	Before we even define the stack, we will list the main results that motivated its definition.

	The Lemma below associates each finite subset of functions to a stack that will be used to solve arc equations associated to this family. 
	
	\begin{lem} \label{Lem.stack.1}
		Let $B \in p$ and $\mathcal G$ be a finite subset of $G^\omega$ whose elements are distinct mod $p$ and none of them is constant mod $p$ such that $\{[f]_p:f\in \mathcal G\}\cup\{[\chi_{\vec\nu}]_p:\nu\in\kappa\}$ is linearly independent. Then there exists a rational stack  $\mathcal S=\langle  \mathcal B, \nu, \zeta, K, A, k_0, k_1, l, T\rangle$ such that, by defining
		${\mathcal A}={\mathcal G} \cup \{ \chi_{\vec{\nu_i}}:\, i<k_0\}$ and ${\mathcal C}= \frac{\bigcup_{i< k_1, j<l_i} {\mathcal B}_{i,j}}{{K}}$, there exist  ${\mathcal M}:\mathcal A\times \mathcal C\rightarrow \mathbb Z$, ${\mathcal N}:\mathcal C\times \mathcal A\rightarrow \mathbb Z$ satisfying:
		
		\begin{enumerate}[label=(\arabic*)]
			\item $\{ [f]_p:\, f \in {\mathcal A} \}$ and $\{ [h]_p:\, h \in {\mathcal C} \}$ generate the same subspace;
			\item $f(n) =\sum_{h \in {\mathcal C}}{\mathcal M}_{f,h}h(n)$, for each $n\in A$ and $f \in {\mathcal A}$,
			\item $h(n) = \frac{1}{T^2}.\sum_{f\in{\mathcal A}}{\mathcal N}_{h,f}f(n)$, for each $n\in A$ and $h\in{\mathcal C}$,
			
			\item $K\mathcal A\subseteq H^\omega$,
			\item $K\mathcal C\subseteq H^\omega$, and
			\item $A\subseteq B$.
		\end{enumerate}
	\end{lem}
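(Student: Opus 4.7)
The strategy is to use the Ramsey property of the selective ultrafilter $p$ to shrink $B$ to an element $A \in p$ on which every sequence in $\mathcal G$ exhibits a uniform combinatorial shape, and then to read off all of the nonuple $\langle \mathcal B, \nu, \zeta, K, A, k_0, k_1, l, T\rangle$ directly from that shape.

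First, I would apply selectivity finitely many times to refine $B$ to some $A' \in p$, $A' \subseteq B$, on which, for each $f \in \mathcal G$, the size $|\supp f(n)|$ is constant in $n \in A'$ and, uniformly across $\mathcal G$, the family of supports $(\supp f(n))_{n \in A'}$ forms a $\Delta$-system with a single common root $R \subseteq \kappa$; on $R$ the value $f(n)(\mu)$ does not depend on $n$, while outside $R$ the coordinates move with $n$. Enumerate $R = \{\nu_0, \ldots, \nu_{k_0 - 1}\}$ to define $\nu : k_0 \to \kappa$, and set $\mathcal A := \mathcal G \cup \{\chi_{\vec{\nu_i}} : i < k_0\}$, which remains $\mathbb Q$-linearly independent modulo $p$ by the hypothesis that $\{[f]_p : f \in \mathcal G\} \cup \{[\chi_{\vec\mu}]_p : \mu \in \kappa\}$ is linearly independent.

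Next, for each $f \in \mathcal G$ the varying part $f(n) - \sum_{i < k_0} f(n)(\nu_i)\chi_{\nu_i}$ has support disjoint from $R$ that genuinely moves with $n$. Further Ramsey refinements on the increasing enumeration of these moving supports produce sequences $\zeta_0, \ldots, \zeta_{k_1 - 1} \in \kappa^\omega$ that list the moving coordinates at each level $i < k_1$, together with finite labels $j < l_i$ registering which of finitely many rational patterns appears at that level. A positive integer $T$ is then chosen as a common denominator large enough that both change-of-basis matrices between $\mathcal A$ and the cleared-denominator family $\mathcal C$ have entries in $\frac{1}{T^2}\mathbb Z$. The finite sets $\mathcal B_{i,j} \subseteq H^\omega$ are defined to record, after clearing denominators, the data attached to level $i$ and label $j$.

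Finally, I would define $K : A \to \omega \setminus 2$ by $K_n = n!\, T\, D_n$, where $D_n$ is a common denominator of all values $f(n)(\mu)$ for $f \in \mathcal A$ and $\mu \in \kappa$; this secures $n!T \mid K_n$ and $K\mathcal A \subseteq H^\omega$, while $K\mathcal C \subseteq H^\omega$ is automatic from $\mathcal C = \bigl(\bigcup_{i,j} \mathcal B_{i,j}\bigr)/K$. The integer matrices $\mathcal M$ and $\mathcal N$ witnessing (2) and (3) are the change-of-basis matrices between $\mathcal A$ and $\mathcal C$; by construction their entries are independent of $n \in A$, and the $\tfrac{1}{T^2}$ factor in (3) comes solely from the choice of $T$. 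The main obstacle will be to arrange all of these uniformizations — on the supports, on the values, and on both change-of-basis directions — \emph{simultaneously} on a single $p$-large set $A$, and in a way that matches precisely the axioms of a rational stack given in Section 5. This is where iterated selectivity is essential, and where the delicate point is to force the inverse change-of-basis to have denominator only $T^2$ rather than something depending on $n$.
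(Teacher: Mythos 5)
Your plan starts from a misleading picture. You propose to refine $B$ so the supports of the $f\in\mathcal G$ form a $\Delta$-system with root $R$ on which the values $f(n)(\mu)$ are constant, and then set $\nu_i$ to be the elements of $R$. That is backwards from what the stack needs. Consider $f(n)=n\chi_0+\chi_n$: the supports form a $\Delta$-system with root $\{0\}$, but $f(n)(0)=n$ is one-to-one, not constant. In the paper's construction the coordinates $\nu_0,\dots,\nu_{k_0-1}$ are precisely the places where some function has one-to-one (genuinely varying) values; these are found one at a time, and the whole machine of Lemmas \ref{Case A}, \ref{Case A2}, \ref{brickline} is there to sort the sequences that touch a given $\zeta_i$ into the bricks $\mathcal B_{i,j}$. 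Selectivity alone cannot make the root values constant, and if you could, the coordinate would be useless for the Kronecker step that ultimately solves the arc equations.

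The second, and more serious, missing ingredient is the residualization map $\sigma$ of Lemma \ref{Case A2}. The bricks $\mathcal B_{i,j}$ are not records of ``which of finitely many rational patterns appears'' — they are groups of functions whose values at $\zeta_i(n)$ grow at the same rate, and within each brick the limits $\theta^h_{h_*}$ of the ratios $h(n)(\zeta_i(n))/h_*(n)(\zeta_i(n))$ must be $\mathbb Q$-linearly independent (condition (vi) of the rational stack). To arrange this the paper selects a basis $\mathcal B$ for the $\theta$'s and replaces every remaining $g$ by $\sigma(g)=g-\sum_{h\in\mathcal B}r_{g,h}h$, which both preserves the linear span and strictly kills the leading growth, so the recursion terminates with $|\mathcal G'_m|<|\mathcal G'_{m-1}|$. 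Without a $\sigma$-like device you cannot enforce (vi), you cannot order the bricks by growth rate as in (vii), and — directly relevant to your final worry — you cannot show that $\{[f]_p: f\in\mathcal A\}$ and $\{[h]_p: h\in\mathcal C\}$ span the same subspace of $G^\omega/p$. It is exactly this last fact that produces an $n$-independent change-of-basis matrix: once the equality $f = \sum_h\mathcal M_{f,h}h$ and its inverse with denominator $T^2$ hold \emph{mod $p$}, you restrict to a set $A\in p$ where they hold pointwise, and $T$ is a single integer clearing finitely many denominators. You flag this as ``the delicate point,'' but your proposal contains no mechanism that would actually deliver it.
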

	
	\begin{proof}
		The proof is quite technical and will be presented in a later section.
	\end{proof}
	
	Notice that if we interpret $\mathcal M$ and $\mathcal N$ as matrices, then $\mathcal M$ and $\frac{1}{T^2}\mathcal N$ are inverse matrices.
	
	\begin{lem} \label{Lem.stack.2} Let $\mathcal S$, ${\mathcal A}$, ${\mathcal C}$, ${\mathcal M}$ and ${\mathcal N}$ be as in Lemma  \ref{Lem.stack.1}. Let $\epsilon$ be a positive real and $D$ be a finite subset of $\kappa$. Then there exist $B \subseteq A$ cofinite in $A$ and a family of positive real numbers $(\gamma_n: n \in B)$ such that:
		
		For every $n \in B$, for every family  $( W_h:\, h \in {\mathcal C} )$ of open arcs of length $\epsilon$, and for every arc function $\psi$ of length $\epsilon$ such that $\supp \psi \subseteq D\setminus \{\nu_0,\ldots \nu_{k_0-1}\}$, there exists an $n$-solution of length $\gamma_n$ for the arc equation $(\psi,B,K.\mathcal C,K_n,W)$. 
	\end{lem}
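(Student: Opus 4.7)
The plan is to exploit the level-by-level structure of the rational stack to build the $n$-solution $\psi'$ coordinate by coordinate. The stack axioms should organize, for each sufficiently large $n\in A$, the supports $\supp h(n)$ for $h\in\mathcal{C}$ into blocks indexed by $(i,j)$ with $i<k_1$, $j<l_i$, in such a way that each block $\mathcal{B}_{i,j}/K$ introduces a ``new'' support coordinate not previously used; these new coordinates will be the free variables that solve the arc equations.

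First, I would pass from $A$ to a cofinite subset $B$ so that, for every $n\in B$, the fixed finite set $D$ is disjoint from $\bigcup_{h\in\mathcal{C}}\supp h(n)$. Such a $B$ exists because the elements of $\mathcal{C}$ are $p$-linearly independent and none is constant mod $p$, so only finitely many $n$ can have any given coordinate in the support of $h(n)$. On the coordinates of $D\setminus\{\nu_0,\ldots,\nu_{k_0-1}\}$ I then take $\psi'$ to be a concentric sub-arc of $\psi$ short enough that $K_n\psi'(\mu)\subseteq\psi(\mu)$, automatically satisfying $K_n\psi'\le\psi$ there; because of the disjointness, these coordinates do not appear in any of the sums for the arc equations over $\mathcal{C}$.

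Second, for each $n\in B$ I would process the blocks in an order compatible with the stack hierarchy, maintaining the invariant that the equation for every previously processed $h$ is satisfied. When handling $h\in\mathcal{B}_{i,j}/K$, every coordinate of $\supp h(n)$ except the block's new coordinate $\mu_h^\ast$ has already been assigned, so the equation reduces to placing the arc $K_n h(n)(\mu_h^\ast)\psi'(\mu_h^\ast)+s_h$ inside $W_h$, where $s_h$ is the partial sum from the already-assigned coordinates. Since $h(n)(\mu_h^\ast)$ is a nonzero integer (using $Kh\in H^\omega$) and $K_n$ is a positive multiple of $n!T$, as the center of $\psi'(\mu_h^\ast)$ ranges over $\mathbb{T}$ the center of $K_n h(n)(\mu_h^\ast)\psi'(\mu_h^\ast)$ covers $\mathbb{T}$, so one can align the sum inside $W_h$.

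The parameter $\gamma_n$ must be chosen so that (a) $K_n\gamma_n\le\epsilon$, ensuring $K_n\psi'\le\psi$ globally; and (b) for each $h\in\mathcal{C}$, $\sum_{\mu\in\supp h(n)}|K_n h(n)(\mu)|\gamma_n\le\epsilon$, so that the length of the combined sum-arc fits inside $W_h$ once its center is aligned. Both inequalities amount to an explicit upper bound on $\gamma_n$ computable from $n$ and the stack data. The main obstacle is verifying that the still-unstated axioms of a rational stack actually supply, for each block and each sufficiently large $n\in B$, a ``new'' coordinate $\mu_h^\ast$ with a nonzero and suitably bounded coefficient $h(n)(\mu_h^\ast)$, together with the compatibility between blocks needed to carry the induction through; this is the rational analogue of the integer-stack argument of \cite{tomita2015}, with the factor $K$ absorbing the denominators of the elements of $\mathcal{C}$ so that all computations remain inside the integer vector space $H^\omega$.
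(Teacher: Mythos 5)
The proposal has the right overall shape (downward recursion over the stack levels, using the ``new coordinate'' $\zeta_i(n)$ as the free variable at each level), but it glosses over the one place where the argument is actually hard and where the stack axioms carry the weight. Within a level $i$, \emph{all} elements of $\mathcal{B}_{i,j}$ — and indeed all of $\bigcup_{j<l_i}\mathcal{B}_{i,j}$ — share the \emph{same} new coordinate $\zeta_i(n)$ (stack condition iii); condition iv) only says that $\zeta_{i'}(n)$ for $i'<i$ is absent from their supports. So your step ``when handling $h\in\mathcal{B}_{i,j}/K$, \dots the equation reduces to placing the arc $K_n h(n)(\mu_h^*)\psi'(\mu_h^*)+s_h$ inside $W_h$ \dots one can align the sum inside $W_h$'' implicitly assigns one free coordinate per $h$, which is not available. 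A single choice of $\psi'(\zeta_i(n))$ must hit a prescribed short arc simultaneously for every $h$ in the brick; this requires that the ratios $h(n)(\zeta_i(n))/h_*(n)(\zeta_i(n))$ approximate a $\mathbb{Q}$-linearly independent family $(\theta_h)$ (stack condition vi) and then an application of Kronecker's density lemma (Lemma~\ref{lem.kronecker}) to locate an arc of controlled length on which the tuple $(h(n)(\zeta_i(n))\cdot x)_{h}$ lands inside the product of targets. Moreover, since bricks $j-1$ and $j$ at the same level $i$ also share $\zeta_i(n)$, the arc obtained for brick $j-1$ must nest inside the one obtained for brick $j$; this is possible only because of the size separation $|u_{i,j-1}(n)(\zeta_i(n))|\gg|v_{i,j}(n)(\zeta_i(n))|$ forced by condition vii) and cashed out as inequality $c)$ in the paper's proof. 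Your proposal mentions neither mechanism, and there is no elementary substitute.

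A secondary but concrete error: you assert that for cofinitely many $n\in A$ the finite set $D$ is disjoint from $\bigcup_{h\in\mathcal{C}}\supp h(n)$, on the grounds that a given coordinate can lie in $\supp h(n)$ for only finitely many $n$. This is false: stack condition xi) explicitly allows a coordinate $\mu$ to lie persistently in $\supp h(n)$ as long as $h(n)(\mu)/K_n$ is eventually constant, and the elements $\chi_{\vec{\nu_i}}/T\in\mathcal{C}$ coming from condition viii) are a concrete instance. What the paper's proof actually arranges (its condition $d)$) is only that $\{\zeta_i(n):k_0\le i<k_1\}\cap D=\void$ for $n\in B$; the remaining intersections of $D$ with $\supp h(n)$ are not removed, they are folded into the fixed offsets via the auxiliary arc function $\psi^*$.
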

	
	\begin{proof} The proof is quite technical and will be presented in a later section.
	\end{proof}

	\begin{lem} \label{Lem.stack.3} Let $\mathcal S$, ${\mathcal A}$, ${\mathcal C}$, ${\mathcal M}$ and ${\mathcal N}$ be as in Lemma  \ref{Lem.stack.1}. Let $\delta$ be a positive real such that $\epsilon =\frac{\delta}{\sum_{f\in {\mathcal A}, h\in{\mathcal C} }|{\mathcal M}_{f,h}|}< 1$.
		
		Let $(U_f:\, f\in {\mathcal A})$ be a family of open arcs of length $\delta$. Let  $\varrho$  be an arc function of length $\delta$ such that $U_{\chi_{\vec{\nu}_i}}=\varrho(\nu_i)$ for $i<k_0$.
		Furthermore, assume that $\{\nu_i:\, i < k_0\} \subseteq \supp \varrho$.
		
		Then there exist $(W_h:\, h \in {\mathcal C})$ a family of open arcs of length $\epsilon$  and $\psi$ an $\epsilon$-arc function with  support  $\supp \varrho \setminus \{\nu_0, \ldots, \nu_{k_0-1}\}$ such that for every $n\in A$, every $n$-solution for the arc equation $(\psi,A,K.\mathcal C,K_n,W)$ is an $n$-solution for $(\varrho,A,K.\mathcal A,K_n,U)$.
		
	\end{lem}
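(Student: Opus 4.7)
The plan is to use the change of basis (2) in Lemma \ref{Lem.stack.1} together with the invertibility of the matrix $\mathcal M$ (whose inverse is $\frac{1}{T^2}\mathcal N$ by (3)) to translate any $n$-solution for the stack arc equation into an $n$-solution for the original arc equation.

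First I would define the arcs $W_h$ so that their $\mathcal M$-combinations match the arcs $U_f$ exactly at their centers. For each $f\in\mathcal A$, I fix a lift $\tilde u_f\in\mathbb R$ of the center of $U_f$; by invertibility of $\mathcal M$, there is a unique real solution $(\tilde x_h)_{h\in\mathcal C}$ of $\sum_h\mathcal M_{f,h}\tilde x_h=\tilde u_f$ for every $f\in\mathcal A$. Letting $x_h$ be the image of $\tilde x_h$ in $\mathbb T$, I take $W_h$ to be the open arc of length $\epsilon$ centered at $x_h$; then $\sum_h\mathcal M_{f,h}W_h$ is an open arc of length $\sum_h|\mathcal M_{f,h}|\epsilon\le\delta$ centered at the center of $U_f$, hence contained in $U_f$. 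I then define $\psi$ with support $\supp\varrho\setminus\{\nu_0,\ldots,\nu_{k_0-1}\}$, and on this support I let $\psi(\mu)$ be the open arc of length $\epsilon$ centered at the center of $\varrho(\mu)$, so that $\overline{\psi(\mu)}\subseteq\varrho(\mu)$ since $\epsilon<\delta$.

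To verify, I would take an $n$-solution $\sigma$ of $(\psi,A,K.\mathcal C,K_n,W)$ and, using the identity $K_nf(n)(\mu)=\sum_h\mathcal M_{f,h}K_nh(n)(\mu)$ from (2), perform a Minkowski-sum rearrangement (based on $(c+c')A\subseteq cA+c'A$ and the factorability $c\sum_\mu b_\mu A_\mu=\sum_\mu cb_\mu A_\mu$) to obtain
\[
\sum_\mu K_nf(n)(\mu)\,\sigma(\mu)\,\subseteq\,\sum_h\mathcal M_{f,h}\Big[\sum_\mu K_nh(n)(\mu)\,\sigma(\mu)\Big]\,\subseteq\,\sum_h\mathcal M_{f,h}W_h\,\subseteq\,U_f,
\]
which handles the containment part of the $n$-solution condition for $(\varrho,A,K.\mathcal A,K_n,U)$. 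For the relation $K_n\sigma\leq\varrho$ the verification splits in three cases: it is trivial for $\mu\notin\supp\varrho$; for $\mu\in\supp\psi$ it follows from $K_n\sigma(\mu)\leq\psi(\mu)$ by transitivity with $\overline{\psi(\mu)}\subseteq\varrho(\mu)$; and for $\mu=\nu_i$ it follows by applying the display above to $f=\chi_{\vec{\nu_i}}$, since $\chi_{\vec{\nu_i}}(n)=\chi_{\nu_i}$ makes the left-hand side collapse to $K_n\sigma(\nu_i)$ while the right-hand side is contained in $U_{\chi_{\vec{\nu_i}}}=\varrho(\nu_i)$.

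The main obstacle will be the edge case at the coordinates $\nu_i$: to get $K_n\sigma(\nu_i)\leq\varrho(\nu_i)$ one really needs the closure containment $\overline{K_n\sigma(\nu_i)}\subseteq\varrho(\nu_i)$, and this requires the strict inequality $\sum_h|\mathcal M_{\chi_{\vec{\nu_i}},h}|\epsilon<\delta$. Such strictness holds with room to spare as long as at least one other $f'\in\mathcal A$ has a non-trivial row in $\mathcal M$, which is a mild consequence of the linear independence hypothesis on $\mathcal G\cup\{\chi_{\vec\nu}:\nu\in\kappa\}$; in any genuinely degenerate situation one could force strictness by shrinking $\epsilon$ slightly and enlarging the $W_h$'s back to length $\epsilon$ around the same centers, or by an explicit openness argument.
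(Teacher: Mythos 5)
Your proof is essentially the paper's proof: you define the $W_h$ by pulling back the centers $y_f$ of the $U_f$ through $\mathcal M^{-1}$ (the paper writes this explicitly as $z_h = \sum_f \mathcal N_{h,f}\,y_f/T^2$, using that $\frac{1}{T^2}\mathcal N$ is the inverse of $\mathcal M$), you define $\psi$ by keeping the centers of $\varrho$ and shrinking to length $\epsilon$ off $\{\nu_0,\ldots,\nu_{k_0-1}\}$ (setting $\psi(\nu_i)=\mathbb T$), and the verification is the same Minkowski rearrangement of $\sum_\mu K_n f(n)(\mu)\phi(\mu)$ through $\mathcal M$, with the case $f=\chi_{\vec\nu_i}$ supplying $K_n\phi(\nu_i)\leq\varrho(\nu_i)$.

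On your closing worry about strictness of $\epsilon\sum_{h}|\mathcal M_{\chi_{\vec\nu_i},h}|<\delta$: it is a real fine point that the paper does not remark on, but it never bites. The only way equality could hold is if every row of $\mathcal M$ other than the $\chi_{\vec\nu_i}$-row vanished; since $\mathcal M$ is invertible this forces $|\mathcal A|=1$. But whenever some $\nu_i$ exists (i.e.\ $k_0>0$), the construction behind Lemma~\ref{Lem.stack.1} produced $\nu_0$ from a witness $g\in\mathcal G$ with $(g(n)(\nu_0))$ eventually one-to-one, so $\mathcal G\ne\void$ and $\mathcal A\supseteq\{g,\chi_{\vec\nu_0}\}$ has size at least $2$; thus some other row of $\mathcal M$ is nonzero and the inequality is strict, giving $\overline{\sum_h\mathcal M_{\chi_{\vec\nu_i},h}W_h}\subseteq U_{\chi_{\vec\nu_i}}$ and hence $K_n\phi(\nu_i)\le\varrho(\nu_i)$ with no need to perturb $\epsilon$ or the $W_h$. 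If $k_0=0$ there are no $\nu_i$ to check and the issue is vacuous.
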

	
	\begin{proof}

		
		Given $f \in \mathcal A$, let $y_f \in {\mathbb R}$ be such that $y_f+\mathbb Z$ is the center of the arc $U_f$.

		For each $h \in \mathcal C$, let $z_h= \sum_{f \in {\mathcal A}} {\mathcal N}_{h,f}.\frac{y_f}{T^2}$.
		Since ${\mathcal N}$ is an integer matrix, it follows that $z_h+{\mathbb Z}= \sum_{f \in {\mathcal A}} {\mathcal N}_{h,f}.(\frac{y_f}{T^2}+{\mathbb Z})$. Let $W_h$ be an arc centered in $z_h+{\mathbb Z}$ whose length is $\epsilon$.

		Let $\psi(\mu)$ be an arc of same center as $\varrho(\mu)$ of length $\epsilon$ for each $\mu \in \supp \varrho \setminus \{ \nu_0, \ldots , \nu_{k_0-1}\}$ and $\psi(\nu_i)=\mathbb T$ for each $i<k_0$.

		Suppose $\phi$ is an $n$-solution for $(\psi,A,K.\mathcal C,K_n,W)$.  Then $\sum_{\mu \in \supp h(n)}K_n.h(n)(\mu).\phi(\mu)\subseteq W_h$, for each $h \in {\mathcal C}$. Also, we have that, for each $\mu \in \kappa \setminus \{ \nu_0,\ldots, \nu_{k_0-1}\}$:
				\begin{equation}\label{perhaps} \tag{\#} K_n.\phi (\mu) \leq \psi (\mu)\leq \varrho (\mu)
			\end{equation}

		Let $f\in\mathcal A$.
		
		We have that for each $\mu$, $\sum_{h \in {\mathcal C}}{\mathcal M}_{f,h}.K_n.h(n)(\mu).\phi(\mu)=K_n.f(n)(\mu)\phi(\mu)$. Therefore,	$\sum_{h \in {\mathcal C}}{\mathcal M}_{f,h}.\sum_{\mu \in \supp h(n)}K_n.h(n)(\mu).\phi(\mu)=\sum_{\mu \in \supp f(n)}K_n.f(n)(\mu).\phi(\mu)$.
		
		It then follows that $\sum_{\mu \in \supp f(n)}K_n.f(n)(\mu).\phi(\mu) \subseteq \sum_{h \in {\mathcal C}} {\mathcal M}_{f,h}.W_h$. The arc $\sum_{h \in {\mathcal C}}{\mathcal M}_{f,h}.W_h$ is centered in $\sum_{h\in {\mathcal C}}{\mathcal M}_{f,h}.(z_h+\mathbb Z)= \sum_{h\in{\mathcal C}}{\mathcal M}_{f,h}\sum_{g \in {\mathcal A}} {\mathcal N}_{h,g}.(\frac{y_g}{T^2}+{\mathbb Z})=\sum_{g \in {\mathcal A}}\sum_{h \in {\mathcal C}}{\mathcal M}_{f,h} {\mathcal N}_{h,g}.(\frac{y_g}{T^2}+{\mathbb Z})=y_f+{\mathbb Z}$, and has length $\epsilon \cdot\sum_{h\in {\mathcal C}}|{\mathcal M}_{f,h}| \leq \delta$. Therefore,  
		\begin{equation}\label{permaps} \tag{*} \sum_{h\in{\mathcal C}}{\mathcal M}_{f,h}.W_h \subseteq U_f.
		\end{equation}
		Thus, $\phi$ is an $n$-solution for $(\varrho,A,K.\mathcal A,K_n,U)$,  as required, provided that we show  $K_n.\phi \leq \varrho$.
		
		From $\eqref{permaps}$, if $f=\chi_{ \vec{\nu_i}}$ then $K_n.\phi(\nu_i)=K_n \chi_{ \vec{\nu_i}}(n)(\nu_i) \phi(\nu_i) \subseteq U_f=\varrho(\nu_i)$, hence  $K_n.\phi (\nu_i)\leq \varrho(\nu_i)$ for each $0\leq i <k_0$. This and \eqref{perhaps} implies that $K_n\phi \leq \varrho$.

	\end{proof}

	\section{Selective ultrafilters and the proof of the Main Lemma using the properties of the stacks} \label{proof.of.Main.Lemma}

	Our goal in this section is  to prove Lemma \ref{main.lemma} using the Lemmas in the previous section. First we state the following Lemma:
	
	\begin{lem}
		Fix a selective ultrafilter $p$.
		Let $\mathcal F\subseteq G^\omega$ be a countable collection of distinct elements mod $p$ such that $\{[f]_p: f \in \mathcal F\} \dot\cup\{ [\chi_{\vec{\mu}}]_p:\, \mu \in \kappa \}$ is $\mathbb Q$-linearly independent in $G^\omega/p$.

		Let $d, d_0,d_1  \in G\setminus \{0\}$ with $\supp d$, $\supp d_0$, $\supp d_1$ pairwise disjoint, and $C$ be a countably infinite subset of $\kappa$ such that $\omega\cup \supp d\cup \supp d_0 \cup \supp d_1\cup \bigcup_{f \in \mathcal F, n \in \omega}\supp f(n) \subseteq C$. For each $f \in \mathcal F$, choose $\xi_f \in C$. Let $(\mathcal F^n:n\in\omega)$ be an increasing sequence of finite sets whose union is $\mathcal F$.
		
		Then there exist stacks $\mathcal S^{m}=\langle  \mathcal B^{m}, \nu^{m}, \zeta^{m}, K^{m}, A^{m}, k_{0}^{m}, k_1^{m}, l^{m}, T^m\rangle$ and ${\mathcal A}^{m}$, ${\mathcal C}^{m}$, $\mathcal M^m$, $\mathcal N^m$ related to the stacks as in Lemma \ref{Lem.stack.1}, $r:\omega\cup\{-1\}\rightarrow \omega$ such that $r[\omega] \in p$ and $r(-1)=0$, and a sequence of arc functions $(\varrho^{r(m)}: m\ge-1)$ with $C\subseteq \bigcup_{m\ge-1} \supp \varrho^{r(m)}$, satisfying the following:
		
		\begin{enumerate}[label=\alph*)]
			\item $0\notin \overline{\sum_{\mu \in \supp d}d(\mu).\varrho^0(\mu)}$ and $\overline{\sum_{\mu \in \supp d_0}d_0(\mu).\varrho^0(\mu)} \cap \overline{\sum_{\mu \in \supp d_1}d_1(\mu).\varrho^0(\mu)}=\void$.
			\item For every $m', m\geq -1$ with $m'\leq m$ and for every $\xi \in \supp \varrho^{r(m+1)}$, we have $\left(\prod_{i=m'}^m K^{r(i)}_{r(i+1)}\right).\varrho^{r(m+1)}(\xi) \subseteq \varrho^{r(m')}(\xi)$ and $\left(\prod_{i=m'}^m K^{r(i)}_{r(i+1)}\right).\varrho^{r(m+1)}(\xi)$ has length $\leq \frac{1}{2^{r(m)+1}}$. 
			\item For every $m \geq -1$ and $f \in \mathcal F^{r(m)}$, we have $\sum_{\mu \in \supp f(r(m+1))} \ f(r(m+1))(\mu)K^{r(m)}_{r(m+1)}\varrho^{r(m+1)}(\mu) \subseteq \varrho^{r(m)}(\xi_f)$. 
			\item For every $m, m' \geq -1$ with $m'\leq m$ and $f \in \mathcal F^{r(m)}$, we have $\sum_{\mu \in \supp f(r(m+1))} \ f(r(m+1))(\mu)\left(\prod_{i=m'}^m K^{r(i)}_{r(i+1)}\right)\varrho^{r(m+1)}(\mu) \subseteq \varrho^{r(m')}(\xi_f)$.
			\item For every $m\geq -1$, $\supp \varrho^{r(m)}\subseteq \supp \varrho^{r(m+1)}$.
			
		\end{enumerate}

		\begin{proof}
			
			Let $p$, $\mathcal F$, $d$, $d_0$, $d_1$,$C$ and $(\xi_f: f \in \mathcal F)$ be given. Write $C$ as an increasing sequence of finite sets $(C^n:n\in\omega)$ such that for each $n\in\omega$, $\bigcup\{\supp f(k): f\in\mathcal{F}^n\text{ and }k\le n\}\subset C^n$, and $\supp d\cup \supp d_0 \cup \supp d_1\subset C^0$. 
			
			Apply Lemma \ref{Lem.stack.1} to $\mathcal G=\mathcal F^0$ and $B=\omega$ to obtain a rational stack $\mathcal S^0=\langle  \mathcal B^0, \nu^0, \zeta^0, K^0, A^0, k_0^0, k_1^0, l^0, T^0\rangle$ and ${\mathcal A}^0$, ${\mathcal C}^0$,  ${\mathcal M}^0$ and ${\mathcal N}^0$ satisfying (1)-(7) as in the Lemma.
			
			Fix $\delta^{0}\in\mathbb R$ such that $0<\delta^0<1$ and $\varrho^0$ a $\delta^{0}$-arc function such that $0\notin \overline{\sum_{\mu \in \supp d}d(\mu).\varrho^0(\mu)}$ and $\overline{\sum_{\mu \in \supp d_0}d_0(\mu).\varrho^0(\mu)} \cap \overline{\sum_{\mu \in \supp d_1}d_1(\mu).\varrho^0(\mu)}=\void$. We will also assume that $C^0\cup \{\nu_i^0:i<k^0_0\} \subseteq \supp \varrho^0$.
			
			Let $\epsilon^0=\frac{\delta^0}{\sum_{f \in {\mathcal A^0}, h \in {\mathcal B^0}}|{\mathcal M}^0_{f,h}|}$. Notice that with this $\epsilon^0$ we may apply Lemma \ref{Lem.stack.3}.
			
			Now we apply Lemma \ref{Lem.stack.2} with $D=C_0$ to obtain $B^0 \subseteq A^0\setminus 1$ and $\boldsymbol{\gamma}^0=(\gamma^0_n: n \in B^0)$ as in the Lemma.

			Suppose that $B^t \in p$  with $(B^t:\, t \leq m)$  decreasing family of subsets of $\omega$, $\gamma^t_n$ for $t\leq m$ and $n \in B^t$ are defined. 
			
			Define $\delta^{m+1} =\frac{1}{2^{m+2}}. \frac{1}{ \prod_{i,n\leq m+1} K^i_n}.\min(\{ \gamma_{n}^t:\, t< n \leq m+2, n \in B^t  \}\cup \{1\})$.
			
			Apply Lemma \ref{Lem.stack.1} with $\mathcal G=\mathcal F^{m+1}$ and $B=B^m$ to obtain a stack $\mathcal S^{m+1}=\langle  \mathcal B^{m+1}, \nu^{m+1}, \zeta^{m+1}, K^{m+1}, A^{m+1}, k_{0}^{m+1}, k_1^{m+1}, l^{m+1}, T^{m+1}\rangle$ and ${\mathcal A}^{m+1}$, ${\mathcal C}^{m+1}, \mathcal M^{m+1}, \mathcal N^{m+1}$ related to the stack as in the lemma. Then $A^{m+1} \subseteq B^m$. Let $\epsilon^{m+1}=\frac{\delta^{m+1}}{\sum_{f \in {\mathcal A}^{m+1},h \in {\mathcal B}^{m+1}}|{\mathcal M}^{m+1}_{f,h}|}$. Notice that with this $\epsilon^{m+1}$ we may apply Lemma \ref{Lem.stack.3}.  
			
			Now we apply Lemma \ref{Lem.stack.2} with $D=C_{m+1}$ using to obtain $B^{m+1} \subseteq A^{m+1}\setminus m+2$ and $\boldsymbol{\gamma}^{m+1}=(\gamma^{m+1}_n: n \in B^{m+1})$ as in the Lemma.

			We will use the happiness of the selective ultrafilter $p$: the sets constructed previously  $B^0 \supseteq B^1 \ldots$ are all elements of $p$, so there exists a function $r \in \omega^\omega$ such that $r[\omega]\in p$, $r(0)\in B^0$ and, for all $n\in \omega$, $r(n+1)\in B^{r(n)}$.

			Define $U^0=(U^0_f: f \in \mathcal A^0)$, where $U^0_f=\varrho^{0}(\xi_f)$ if $f\in \mathcal{F}^0$ or $U^0_f=\varrho^0(\nu^0_i)$ if $f=\chi_{\vec{\nu}^0_i}$. By Lemma \ref{Lem.stack.3} applied on stage 0, we obtain $\psi$ and $W$ as in the conclusion of Lemma \ref{Lem.stack.3}. Now, according to the conclusion of Lemma \ref{Lem.stack.2} used in stage 0 of the construction, since $r(0)\in B^0$, we obtain a $r(0)$-solution $\phi^0$ of length $\gamma^0_{r(0)}$ to the arc equation $(\psi,B^0,K^0.\mathcal C^0,K^0_{r(0)},W)$.
			
			Now, using the conclusion of Lemma \ref{Lem.stack.3}, we have that $\phi^0$ is a $r(0)$-solution to $(\varrho^0,B^0,K^0.\mathcal A^0,K^0_{r(0)},U^0)$.
			
			In particular,  we have, for each $f\in\mathcal F^0$,
			
			$\sum_{\mu \in \supp f(r(0))} \ f(r(0))(\mu)K^0_{r(0)}\phi^0(\mu) \subseteq \varrho^0(\xi_f)$.
			
			Let $r(-1)=0$.  We can inductively to construct $\phi^{r(m)}$, $\varrho^{r(m)}$ a $\delta^{r(m)}$-arc function with $C^{r(m)} \subseteq \supp \varrho^{r(m)}$, and $U^{r(m)}$ such that:
			\begin{enumerate}[label=\arabic*)]
				\item For every $m\geq -1$, $\phi^{r(m)}$ is a $r(m+1)$-solution of length $\gamma^{r(m)}_{r(m+1)}$ of the arc equation $(\varrho^{r(m)}, B^{r(m)}, K^{r(m)}.\mathcal A^{r(m)}, K^{r(m)}_{r(m+1)}, U^{r(m)})$, 
				\item For every $m \geq -1$, $\varrho^{r(m+1)}\leq \phi^{r(m)}$,
				\item For every $m', m\geq -1$ with $m'\leq m$ and for every $\xi \in \supp \varrho^{r(m+1)}$, we have $\left(\prod_{i=m'}^mK^{r(i)}_{r(i+1)}\right).\varrho^{r(m+1)}(\xi) \subseteq \varrho^{r(m')}(\xi)$ and $\left(\prod_{i=m'}^mK^{r(i)}_{r(i+1)}\right).\varrho^{r(m+1)}(\xi)$ has length $\leq \frac{1}{2^{r(m)+1}}$, 
				\item For every $m \geq -1$ and $f \in \mathcal F^{r(m)}$, we have $\sum_{\mu \in \supp f(r(m+1))} \ f(r(m+1))(\mu)K^{r(m)}_{r(m+1)}\varrho^{r(m+1)}(\mu) \subseteq \varrho^{r(m)}(\xi_f)$,
				\item For every $m, m' \geq -1$ with $m'\leq m$ and $f \in \mathcal F^{r(m)}$, we have $\sum_{\mu \in \supp f(r(m+1))} \ f(r(m+1))(\mu)\left(\prod_{i=m'}^mK^{r(i)}_{r(i+1)}\right)\varrho^{r(m+1)}(\mu) \subseteq \varrho^{r(m')}(\xi_f)$,
				\item For every $m\geq -1$, $\supp \varrho^{r(m)}\subseteq \supp \varrho^{r(m+1)}$, and
				\item $U^{r(m)}=(U^{r(m)}_f: f \in \mathcal A^{r(m)})$, where $U^{r(m)}_f=\varrho^{r(m)}(\xi_f)$ if $f\in \mathcal{F}^{r(m)}$ or $U^{r(m)}_f=\varrho^{r(m)}(\nu^{r(m)}_i)$ if $f=\chi_{\vec{\nu}^{r(m)}_i}$.
				
			\end{enumerate}

			The base of the recursion is already done. Suppose the construction is done until step $m$ and let us define $\varrho^{r(m+1)}$ and $\phi^{r(m+1)}$.
			
			Let $\varrho^{r(m+1)}$ be a $\delta^{r(m+1)}$-arc funtion such that $\supp \varrho^{r(m)}\cup C^{r(m+1)}\cup\{\nu_i^{r(m+1)}:i<k^{r(m+1)}_0\}\subseteq\supp\varrho^{r(m+1)}$ and $\varrho^{r(m+1)}\le\phi^{r(m)}$.
			
			Now define $U^{r(m+1)}=(U^{r(m+1)}_f: f \in \mathcal A^{r(m+1)})$, where $U^{r(m+1)}_f=\varrho^{r(m+1)}(\xi_f)$ if $f\in \mathcal{F}^{r(m+1)}$ or $U^{r(m+1)}_f=\varrho^{r(m+1)}(\nu^{r(m+1)}_i)$ if $f=\chi_{\vec{\nu}^{r(m+1)}_i}$. By Lemma \ref{Lem.stack.3} applied on stage $m+1$, we obtain $\psi$ and $W$ as in the conclusion of Lemma \ref{Lem.stack.3}. Now, according to the conclusion of Lemma \ref{Lem.stack.2} used in stage $m+1$ of the construction, since $r(m+2)\in B^{r(m+1)}$, we obtain a $r(m+2)$-solution $\phi^{r(m+1)}$ of length $\gamma^{r(m+1)}_{r(m+2)}$ to the arc equation $(\psi,B^{r(m+1)},K^{r(m+1)}.\mathcal C^{r(m+1)},K^{r(m+1)}_{r(m+2)},W)$.
			
			Now, using the conclusion of Lemma \ref{Lem.stack.3}, we have that $\phi^{r(m+1)}$ is a $r(m+2)$-solution to $(\varrho^{r(m+1)},B^{r(m+1)},K^{r(m+1)}.\mathcal A^{r(m+1)},K^{r(m+1)}_{r(m+2)},U^{r(m+1)})$.
			
			With , $\varrho^{r(m+1)}$ and $\phi^{r(m+1  )}$ thus defined, items 1), 2), 6) and 7) of the recursion are immediately satisfied.
			
			In order to verify item 3): the second statement follows from the definition of $\delta^{r(m+1)}$. As for the first, use items 1) and 2) and then use item 3) iteratively.
			
			Item 4) follows from items 1) and 2) and the definiton of $U^{r(m)}$.
			
			Item 5) follows from multiplying the expression in 4) by $\left(\prod_{i=m'}^{m-1}K^{r(i)}_{r(i+1)}\right)$ and then applying item 3) for $m'$ and $m-1$.
			
			Now that the recursion is complete, notice that items a)-e) of the statement of the Lemma are clearly satisfied.

		\end{proof}

	\end{lem}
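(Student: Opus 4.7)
The plan is to do a recursion in stages indexed by $m$, building the auxiliary data $(\mathcal S^m, \mathcal A^m, \mathcal C^m, \mathcal M^m, \mathcal N^m, B^m, \gamma^m, \delta^m)$ first, and only then invoking the selectivity of $p$ to produce a single ``diagonal'' function $r$ along which the arc functions $\varrho^{r(m)}$ and solutions $\phi^{r(m)}$ will be defined. At stage $m$ I apply Lemma~\ref{Lem.stack.1} to $\mathcal G = \mathcal F^m$ and the current set $B^{m-1}\in p$ (with $B^{-1}=\omega$) to obtain $\mathcal S^m$ and the associated $\mathcal A^m, \mathcal C^m, \mathcal M^m, \mathcal N^m$ with $A^m\subseteq B^{m-1}$. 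I then define $\epsilon^m = \delta^m/\sum_{f,h}|\mathcal M^m_{f,h}|$ (so Lemma~\ref{Lem.stack.3} is available at scale $\delta^m$) and apply Lemma~\ref{Lem.stack.2} with the finite set $D=C^m$ (an exhaustion of $C$ chosen so that eventually $\bigcup_m C^m = C$ and $\{\supp f(k): f\in\mathcal F^m, k\le m\}\subseteq C^m$) to get $B^m\subseteq A^m$ and $(\gamma^m_n: n\in B^m)$.

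The key bookkeeping choice is the $\delta^m$. I take $\delta^0<1$ small enough, together with an arc function $\varrho^0$ of length $\delta^0$ with support containing $C^0\cup\{\nu_i^0: i<k_0^0\}$, to force condition (a) at the initial stage. For $m\ge 0$ I set
\[
\delta^{m+1} \;=\; \frac{1}{2^{m+2}}\cdot\frac{1}{\prod_{i,n\le m+1}K^i_n}\cdot\min\bigl(\{\gamma^t_n: t<n\le m+2,\ n\in B^t\}\cup\{1\}\bigr),
\]
which is a finite minimum fixed at stage $m+1$ and is independent of any later choice of $r$. This ensures that once $r$ is chosen, any arc of length $\delta^{r(m+1)}$, even after multiplication by the relevant products $\prod K^{r(i)}_{r(i+1)}$ coming from (b) and (d), still lies inside the parent $\gamma$-arcs from earlier stages and has length controlled by $2^{-r(m)-1}$.

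With the $B^m\in p$ forming a decreasing chain, the selectivity (``happiness'') of $p$ yields a function $r\in\omega^\omega$ with $r[\omega]\in p$ and $r(n+1)\in B^{r(n)}$ for every $n$; set $r(-1)=0$. I then recursively build $\varrho^{r(m)}$ and $\phi^{r(m)}$ as follows. Given $\varrho^{r(m)}$, form the tuple $U^{r(m)}=(U^{r(m)}_f)_{f\in\mathcal A^{r(m)}}$ by setting $U^{r(m)}_f = \varrho^{r(m)}(\xi_f)$ when $f\in\mathcal F^{r(m)}$ and $U^{r(m)}_f = \varrho^{r(m)}(\nu^{r(m)}_i)$ when $f=\chi_{\vec\nu^{r(m)}_i}$. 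Apply Lemma~\ref{Lem.stack.3} to produce $\psi$ and $W$ at scale $\epsilon^{r(m)}$, then Lemma~\ref{Lem.stack.2} with $n=r(m+1)\in B^{r(m)}$ to obtain an $r(m+1)$-solution $\phi^{r(m)}$ of length $\gamma^{r(m)}_{r(m+1)}$ to the arc equation $(\psi,B^{r(m)},K^{r(m)}.\mathcal C^{r(m)},K^{r(m)}_{r(m+1)},W)$; the conclusion of Lemma~\ref{Lem.stack.3} turns it into an $r(m+1)$-solution of $(\varrho^{r(m)},\ldots,U^{r(m)})$. Finally define $\varrho^{r(m+1)}$ as any $\delta^{r(m+1)}$-arc function with $\varrho^{r(m+1)}\le\phi^{r(m)}$ and support containing $\supp\varrho^{r(m)}\cup C^{r(m+1)}\cup\{\nu_i^{r(m+1)}: i<k_0^{r(m+1)}\}$.

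The verification is then mostly immediate. Condition (a) holds by the choice of $\varrho^0$. Condition (e) holds by construction. The one-step case of (b), namely $K^{r(m)}_{r(m+1)}\varrho^{r(m+1)}\le\varrho^{r(m)}$ together with length bounded by $2^{-r(m)-1}$, follows from $\varrho^{r(m+1)}\le\phi^{r(m)}$, the definition of an $n$-solution, and the length estimate built into $\delta^{r(m+1)}$; the general statement telescopes. Condition (c) is just the $f\in\mathcal F^{r(m)}$ clause of $\phi^{r(m)}$ being an $r(m+1)$-solution to the $(\varrho^{r(m)},\ldots,U^{r(m)})$ equation, combined with $K^{r(m)}_{r(m+1)}\varrho^{r(m+1)}\le\phi^{r(m)}$. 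Condition (d) is obtained by multiplying (c) by $\prod_{i=m'}^{m-1}K^{r(i)}_{r(i+1)}$ and iterating (b). I expect the main obstacle to be exactly the bookkeeping for $(\delta^m)$: one must commit to all shrinking rates \emph{before} the diagonal $r$ is chosen, while the later products of $K^{r(i)}_{r(i+1)}$ that appear in (b) and (d) depend on $r$. The resolution, as in the formula above, is to pre-absorb a uniform upper bound $\prod_{i,n\le m+1}K^i_n$ that dominates every product of $K$'s that could arise from stages $\le m+1$ regardless of how $r$ is later chosen, together with the minimum of the finitely many available $\gamma^t_n$.
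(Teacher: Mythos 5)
Your proposal follows the paper's own proof essentially step for step: the same auxiliary recursion producing $(\mathcal S^m, B^m, \gamma^m, \delta^m)$ before choosing $r$, the identical formula $\delta^{m+1}=\frac{1}{2^{m+2}}\cdot\frac{1}{\prod_{i,n\le m+1}K^i_n}\cdot\min(\{\gamma^t_n: t<n\le m+2,\ n\in B^t\}\cup\{1\})$, the same use of selectivity to produce the diagonal $r$, and the same back-and-forth between Lemmas \ref{Lem.stack.1}--\ref{Lem.stack.3} to build $\varrho^{r(m)}$ and $\phi^{r(m)}$. You also correctly identify and resolve the one genuine subtlety (pre-absorbing a uniform bound on products of $K$'s before $r$ is known), which is exactly how the paper handles it.
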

	
	\begin{lemnumber}[Main Lemma]
		Fix a selective ultrafilter $p$.
		Let $\mathcal F\subseteq G^\omega$ be a countable collection of distinct elements mod $p$ such that $\{[f]_p: f \in \mathcal F\} \dot\cup\{ [\chi_{\vec{\mu}}]_p:\, \mu \in \kappa \}$ is $\mathbb Q$-linearly independent in $G^\omega/p$.

		Let $d, d_0,d_1  \in G\setminus \{0\}$ with $\supp d$, $\supp d_0$, $\supp d_1$ pairwise disjoint, and $C$ be a countably infinite subset of $\kappa$ such that $\omega\cup \supp d\cup \supp d_0 \cup \supp d_1\cup \bigcup_{f \in \mathcal F, n \in \omega}\supp f(n) \subseteq C$. For each $f \in \mathcal F$, choose $\xi_f \in C$.
		
		Then there exists a homomorphism $\phi:\, {\mathbb Q}^{(C)}\longrightarrow {\mathbb T}$
		such that
		\begin{enumerate}[label=\alph*)]
			\item $\phi(d)\neq 0$, $\phi(d_0)\neq \phi(d_1)$, and
			\item $p$-$\lim (\phi (\frac{1}{P}.f))=\phi(\frac{1}{P}.\chi_{\xi_f})$, for each $f \in \mathcal F$ and $P\in \omega\setminus\{0\}$.
		\end{enumerate}
	\end{lemnumber}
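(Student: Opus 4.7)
The plan is to build $\phi$ as the pointwise limit of the approximations encoded by the arc functions from the preceding lemma. First I apply that lemma to the given data $\mathcal F$, $d$, $d_0$, $d_1$, $C$, $(\xi_f:f\in\mathcal F)$, obtaining stacks $\mathcal S^m$, a function $r$ with $r[\omega]\in p$, and arc functions $(\varrho^{r(m)})_{m\ge-1}$ satisfying conditions (a)--(e). Set $Q_m=\prod_{i=-1}^{m-1}K^{r(i)}_{r(i+1)}$, so that $Q_0=1$ and $Q_{m+1}=Q_m\cdot K^{r(m)}_{r(m+1)}$; since each factor $K^{r(i)}_{r(i+1)}$ is divisible by $r(i+1)!$ and $r$ is strictly increasing, for every $N\ge 1$ we have $N\mid Q_{m+1}$ for all sufficiently large $m$.

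For each $\xi\in C$ and $N\ge 1$, I choose $m_0=m_0(\xi,N)$ so large that $\xi\in\supp\varrho^{r(m_0+1)}$ and $N\mid Q_{m_0+1}$. For $m\ge m_0$ the set $(Q_{m+1}/N)\varrho^{r(m+1)}(\xi)$ is an honest arc in $\mathbb T$, and property (b) of the preceding lemma shows these arcs are nested as $m$ grows and that their lengths tend to $0$. I then define
$$\phi\bigl(\tfrac{1}{N}\chi_\xi\bigr)\ =\ \text{the unique point of } \bigcap_{m\ge m_0}\overline{(Q_{m+1}/N)\,\varrho^{r(m+1)}(\xi)}.$$
For $N\mid N'$, this gives $(N'/N)\phi(\tfrac{1}{N'}\chi_\xi)=\phi(\tfrac{1}{N}\chi_\xi)$, so extending by $\mathbb Z$-linearity produces a well-defined group homomorphism $\phi:\mathbb Q^{(C)}\to\mathbb T$.

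Condition (a) follows by pushing the definition down to level $0$: for any $m$ large enough that all denominators appearing in $d$ divide $Q_{m+1}$, one has $\phi(d)\in\overline{\sum_\mu d(\mu)\,Q_{m+1}\varrho^{r(m+1)}(\mu)}$, and property (b) then embeds this inside $\overline{\sum_\mu d(\mu)\varrho^0(\mu)}$, which misses $0$ by property (a). The same argument applied to $d_0$ and $d_1$, together with the disjointness clause of (a), gives $\phi(d_0)\ne\phi(d_1)$. For condition (b), since $r[\omega]\in p$ it suffices to verify the ordinary topological limit $\lim_{m\to\infty}\phi(\tfrac{1}{P}f(r(m+1)))=\phi(\tfrac{1}{P}\chi_{\xi_f})$. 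For $m$ large enough that $f\in\mathcal F^{r(m)}$ and $P\mid Q_m$, property (c) of the preceding lemma gives $\sum_\mu f(r(m+1))(\mu)\,K^{r(m)}_{r(m+1)}\varrho^{r(m+1)}(\mu)\subseteq\varrho^{r(m)}(\xi_f)$, and multiplying both sides by the integer $Q_m/P$ places both $\phi(\tfrac{1}{P}f(r(m+1)))$ and $\phi(\tfrac{1}{P}\chi_{\xi_f})$ in the common arc $(Q_m/P)\varrho^{r(m)}(\xi_f)$, whose length tends to $0$ by property (b). Hence the two values converge to one another as $m\to\infty$.

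The main technical obstacle I anticipate is that multiplication of an arc in $\mathbb B$ by a non-integer rational is not a well-defined operation on $\mathbb T$, so all the containments above must be interpreted with coherent choices of interval lifts to $\mathbb R$. What keeps the argument honest is the stack condition $K\mathcal A\subseteq H^\omega$: it forces $Q_{m+1}f(r(m+1))(\mu)\in\mathbb Z$ for $f\in\mathcal F$, so once $P$ (and likewise the denominators in $d$, $d_0$, $d_1$) divides $Q_{m+1}$, every rational coefficient in the sums collapses to an honest integer multiple of the relevant arc, and the containments become statements about integer multiples in $\mathbb T$, which are well defined.
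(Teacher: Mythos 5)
Your proposal is essentially the same proof the paper gives: both invoke the immediately preceding lemma to obtain the stacks, the function $r$ with $r[\omega]\in p$, and the nested arc functions $\varrho^{r(m)}$; both form the cumulative products $Q_m$; both define $\phi$ on the generators $\frac{1}{N}\chi_\xi$ as the unique point in a decreasing intersection of arcs and extend to a homomorphism; and both verify (a) from the level-$0$ arc function and (b) by tracking the common arc $\frac{Q_m}{P}\varrho^{r(m)}(\xi_f)$ along $r[\omega]$. Your closing observation about integrality, via $K\mathcal A\subseteq H^\omega$, making the arc-arithmetic well defined is exactly the point the paper relies on implicitly, so the argument is correct and takes the same route.
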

	\begin{proof}
		Let $\mathcal S^m$, $\mathcal A^m$, $\mathcal C^m$, $\mathcal M^m$, $\mathcal N^m$, $\mathcal F^m$ and $\varrho^{r(m)}$, $(m \in \omega)$, and $\varrho^{0}$ be as in the previous lemma.
		
		For each $m\in \omega$, let $Q_m=\prod_{i=-1}^{m-1}K^{r(i)}_{r(i+1)}$. Now given a positive integer $m'$ and $\xi \in C\cap \supp \varrho^{r(m')}$, define $\phi\left(\frac{1}{Q_{m'}}.\chi_\xi\right)$ as the unique element of $\bigcap_{m \geq m'} \frac{Q_m}{Q_{m'}} \varrho^{r(m)}(\xi)$. Furthermore, if $P$ divides $Q_{m'}$ then define $\phi(\frac{1}{P}.\chi_\xi)=\frac{ Q_{m'}}{P}\phi\left(\frac{1}{Q_{m'}}.\chi_\xi\right)$. Then, since $n!|K_n^m$ for every $n, m$, $\phi\left(\frac{1}{P}.\chi_\xi\right)$ is well-defined and does not depend on $Q_{m'}$ and $\phi$ can be extended to a homomorphism.
		
		Notice that since $0\notin \overline{\sum_{\mu \in \supp d}d(\mu).\varrho^0(\mu)}$ and
		$$\overline{\sum_{\mu \in \supp d_0}d_0(\mu).\varrho^0(\mu)}\cap \overline{\sum_{\mu \in \supp d_1}d_1(\mu).\varrho^0(\mu)}=\void,$$ it follows that $\phi(d)\neq 0$ and $\phi(d_0)\neq \phi(d_1)$.

		\medskip
		
		Let $f \in \mathcal F$ and $P$ be a positive integer. Let $M$ be positive such that $f \in \mathcal F^M$.
		
		\medskip
		
		{\bf Claim}: $( \phi(\frac{1}{P}.f(r(m))):\, m \in \omega)$ converges to $\phi(\frac{1}{P}.\chi_{\xi_f})$.
		
		\begin{proof}Let $m\geq M$ be such that $P$ divides $Q_{m-1}$ and $\xi_f \in C\cap\supp\varrho^{r(m-1)}$.  Then
			
			$\phi(\frac{1}{P}.f(r(m))) =\phi(\frac{1}{P}.\sum_{\mu \in \supp f(r(m))} f(r(m))(\mu)\chi_\mu)=$
			
			$\phi(\sum_{\mu \in \supp f(r(m))} \frac{f(r(m))(\mu)}{P}.\chi_\mu)=$
			$\sum_{\mu \in \supp f(r(m))} \phi(\frac{f(r(m))(\mu)}{P}.\chi_\mu)=$
			
			$\sum_{\mu \in \supp f(r(m))} \frac{f(r(m))(\mu)}{P}.Q_m.\phi(\frac{1}{Q_m}.\chi_\mu)=$
			
			$\frac{1}{P}.Q_{m-1}\sum_{\mu \in \supp f(r(m))} K^{r(m-1)}_{r(m)} f(r(m))(\mu).\phi(\frac{1}{Q_m}.\chi_\mu)\in $
			
			$\frac{1}{P}.Q_{m-1}\sum_{\mu \in \supp f(r(m))} K^{r(m-1)}_{r(m)} f(r(m))(\mu).\varrho_{r(m)}(\mu) \subseteq $
			
			$\frac{1}{P}.Q_{m-1}\varrho_{r(m-1)}  (\xi_f)$.
			
			This last set is a neighborhood of 
			$\phi(\frac{1}{P}.\chi_{\xi_f})$ and has length at most $\frac{1}{2^{r(m-2)}+1}$. 
			
			This proves the claim.
		\end{proof}
		
		Since $r[\omega] \in p$ it follows that the $p$-limit of  $(\phi(\frac{1}{P}.f(n)):\, n \in \omega)$ is $\phi(\frac{1}{P}.\chi_{\xi_f})$.\end{proof}
	\section{Defining rational stacks}

	We define stacks as a tool to solve a system of arc equations. The way the rational stack is constructed is motivated by the construction in \cite{tomita2015}. 
	
	We want to solve arc equations related to the representatives of the basis for $G^\omega/p$. We construct a stack, associate the original arc equations to arc equations for the stack, solve the arc equations for the stack and these solutions will lead to a solution to the original system of arc equations.
	
	For the most basic example, if we take a  stack with a single element $\langle \{h\} , \{Z_n:\, n \in A \}\rangle$, we choose a point $\zeta(n)$ (condition iii)) in the support of $h(n)$. The values of 
	$h(n)(\zeta(n))$ and $Z_n$ and their ratio (condition v)) are the main ingredients to solve the arc equation. According to the size of the arc entry, either the denominator $Z_n$ or the numerator $h(n)(\zeta(n))$ must be large enough (condition v)).
	The output arc will shrink proportionally to the sizes of the numerator $Z_n$ and the denominator $h(n)(\zeta(n))$.
	
	If we are dealing with a stack with more elements, the same $\zeta$ might have to be used for different sequences. Those $h(n)(\zeta(n))$ that increase at a proportional  speed (condition vi)) will be solved together. But there may have other sequences using the same $\zeta$ that have different speeds.
	
	The arc equations of the bricks whose denominator $h(n)(\zeta(n))$ is smaller will be solved first, since they require larger input arcs.
	To continue using the same $\zeta$ for arc equations for the next brick with the same $\zeta$, the $h(n)(\zeta(n))$ must be much larger to compensate that the output arcs have shrunk to solve the equations for the previous brick (condition vii)).
	
	The sequences may use a different element of the support, but the shrinking of the arcs that solved previous equations must not interfere with the size of the arc related to a different point in the support. For this reason we need conditions ii), iv) and vii). 
	
	The idea to use the stacks to solve arc equations back and forth is based on the idea  in \cite{boero&castro-pereira&tomitaoneselective}.

	The order used in the stack corresponds to the order we construct a  stack associated to a finite number of elements of the representatives of a basis. The order to solve the arc equations is the reverse order of the stack.

	\begin{defin} A rational stack is an nonuple $\langle   \mathcal B, \nu, \zeta, K, A, k_0, k_1, l, T\rangle$, where:
		\begin{itemize}
			\item $A\subseteq \omega$ is infinite,
			\item $k_0\leq k_1$ are natural numbers with $k_1>0$,
			\item $l:k_1\rightarrow \omega$,
			\item $\nu:k_0\rightarrow \kappa$,
			\item $\zeta:k_1\rightarrow \kappa^\omega$,
			\item $K:\omega\rightarrow \omega\setminus 2$ is such that for every $n \in A$, $n!T\mid K_n$,
			\item $\mathcal B=({{\mathcal B}}_{i,j} :\, i< k_1, j < l_i)$ is such that each $\mathcal B_{i, j}\subseteq H^\omega$ is finite,
			\item $T>0$ is an integer.
		\end{itemize}
		
		\begin{enumerate}[label=\roman*)]
			\item $\zeta_i(n)=\nu_i$ for every $i<k_0$ and $n \in A$,
			\item  The elements $\nu_i\, (i<k_0)$ and $\zeta_j(n) \, (k_0\leq j<k_1, n \in A)$ are pairwise distinct,
			\item $\zeta_i(n) \in \supp h(n)$, for each $ i<k_1$, $j<l_i$, $h \in {\mathcal B}_{i,j}$ and $n\in A$,
			\item $\zeta_{i}(n) \notin \supp h(n)$, for each, $i<i_*<k_1$, $j< l_{i_*}$ and $h \in {\mathcal B}_{i_*,j}$ and $n \in A$,
			\item $\left( \frac{h(n)(\zeta_i(n))}{K_n}\right)_{n\in A}$ converges monotonically to $+\infty$, $-\infty$ or a real number, for each $i< k_1$, $j<l_i$ and $h\in {\mathcal B}_{i,j}$,
			\item For every $i<k_1, j<l_i$, there exists $h_* \in \mathcal B_{i, j}$ such that for every $h \in \mathcal B_{i, j}$, $\left(\frac{h(n)(\zeta_i(n))}{h_{*}(n)(\zeta_i(n))}\right)_{n\in A}$ converges to a real number $\theta_{h_*}^h$ and $(\theta_{h_*}^h:\,h \in {\mathcal B}_{i,j})$ is linearly independent (as a $\mathbb Q$-vector space),
			\item For each $i<k_1$, $j'<j<l_i$, $h \in {\mathcal B}_{i,j}$ and $h'\in {\mathcal B}_{i,j'}$, $\left(\frac{h(n)(\zeta_i(n))}{h'(n)(\zeta_i(n))}\right)_{n\in A}$ converges monotonically to $0$,
			\item For each $ i<k_0$ there exists $j<l_i$ such that $\frac{K}{T} . \chi_{\vec{\nu_i}}\in{\mathcal B}_{i,j}$,
			\item $\left(|h(n)(\zeta_i(n))|\right)_{n\in A}$ is strictly increasing, for each $i<k_1, j<l_i$ and $h \in {\mathcal B}_{i,j}$, and
			\item  For each $i<k_1, j<l_i$ and distinct $h, h_* \in {\mathcal B}_{i,j}$, either 
			
			\begin{itemize}
				\item $|h(n)(\zeta_i(n))|> |h_*(n)(\zeta_i(n))|$  for each  $n \in A$, or
				\item   $|h(n)(\zeta_i(n))|= |h_*(n)(\zeta_i(n))|$  for each  $n \in A$, or
				\item  $|h(n)(\zeta_i(n))|< |h_*(n)(\zeta_i(n))|$  for each  $n \in A$.
			\end{itemize}
			
			\item for all $\mu \in \kappa$, $i \in \omega$ such that $k_0\leq i<k_1$ and $g \in \bigcup_{j<l_i}B_{i, j}$, if $\{n \in \omega: \mu \in \supp g(n)(\mu)\}$ then  $\left( \frac{g(n)(\mu)}{K_n}\right)_{n\in A}$ is constant.
			
		\end{enumerate}

	\end{defin}
	
	The family ${\mathcal B}_{i,j}$ is named $(i,j)$-brick. This concept is inspired on the concepts defined in \cite{tomita2015}.
	
	Notice that vi) implies that for every $i<k_1$ $j<l_i$, $h_* \in \mathcal B_{i, j}$, for every $h \in \mathcal B_{i, j}$, $\left(\frac{h(n)(\zeta_i(n))}{h_{*}(n)(\zeta_i(n))}\right)_{n\in A}$ converges to a real number $\theta_{h_*}^h$ and $(\theta_{h_*}^h:\,h \in {\mathcal B}_{i,j})$ is linearly independent.
	
	\medskip

	\section{Constructing a sequence of rational stacks}
	
	\label{section.associated.stack}

	Given a finite sequence of functions we want to find an element of the ultrafilter $p$ that makes the restricted functions closer to  the properties we want for the stack.
	
	\begin{lem} \label{Case A} Suppose that ${\mathcal G}$ is a finite subset $ G^\omega$, $p$ is a selective ultrafilter and $C\in p$. Suppose $\zeta, \zeta_* \in \kappa ^\omega$ are such that there exist $g_* \in {\mathcal G}$ such that $\{ n \in C:\, \zeta(n)\in \supp g_*(n)\} \in p$ and $\{ n\in C:\, \zeta(n)=\zeta_*(n) \} \notin p$.
		
		Then there exist $B'\in p$, $B' \subseteq C$ and $\mathcal H\subseteq \mathcal G$ such that:

		\begin{enumerate}[label=($\star$\arabic*)]
			\item $(\zeta(n))_{n\in B'}$  is either constant or 1-1,
			\item for each $g\in {\mathcal G}$, either $\zeta(n) \in \supp g(n)$ for each $n \in B'$ or  $\zeta(n) \notin \supp g(n)$ for each $n \in B'$,
			\item ${\mathcal H}=\{g \in {\mathcal G}:\forall n \in B'\, \zeta(n) \in \supp g(n)\}$ is nonempty,
			\item $(g(n)(\zeta(n)))_{n\in B'}$ converges strictly monotonically to an element of the extended real line, or it is constant and equal to a rational number, for each $g \in {\mathcal H}$,
			\item  given $f,g \in {\mathcal H}$ either $|g(n)(\zeta(n))> f(n)(\zeta(n))|$ for each $n\in B'$,  $|g(n)(\zeta(n))|= |f(n)(\zeta(n))|$ for each $n\in B'$  or
			$|g(n)(\zeta(n))|< |f(n)(\zeta(n))|$ for each $n\in B'$,
			\item 	for each pair $g, h \in {\mathcal H}$, the sequence $(\frac{g(n)(\zeta(n))}{h(n)(\zeta(n))})_{n\in B'}$ converges to $+\infty$, $-\infty$ or to a real number, and
			\item $\zeta(n) \neq \zeta_*(m) $ for each $n, m \in B'$.
		\end{enumerate} 
	\end{lem}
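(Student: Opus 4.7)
The plan is to obtain $B'$ by finitely many successive refinements of $C$, each producing a smaller element of $p$. Since every universal quantifier in $(\star 1)$--$(\star 7)$ ranges over the finite set $\mathcal G$ (or over pairs of the finite set $\mathcal H\subseteq \mathcal G$), only finitely many refinements are ever needed, and each one is justified either by the ultrafilter property of $p$ or by its Ramsey/selectivity property. First, for each $g \in \mathcal G$ exactly one of $\{n \in C : \zeta(n) \in \supp g(n)\}$ and its complement belongs to $p$; intersecting the chosen sets yields $C_1 \in p$ realizing $(\star 2)$, and one then defines $\mathcal H := \{g \in \mathcal G : \zeta(n) \in \supp g(n) \text{ for all } n \in C_1\}$, which contains $g_*$ by hypothesis, giving $(\star 3)$. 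By selectivity applied to $\zeta\restriction C_1$ there is $C_2 \subseteq C_1$ in $p$ on which $\zeta$ is constant or 1-1, yielding $(\star 1)$.

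For $(\star 4)$--$(\star 6)$ I exploit the Ramsey property of $p$. For each $g \in \mathcal H$, color $[C_2]^2$ by the sign of $g(n)(\zeta(n)) - g(m)(\zeta(m))$; a $p$-large monochromatic set makes the sequence $(g(n)(\zeta(n)))_n$ either strictly monotone, hence convergent in $[-\infty,+\infty]$, or constant and thus equal to a rational. Intersecting over the finite collection $\mathcal H$ gives $C_3 \subseteq C_2$ in $p$ realizing $(\star 4)$. For $(\star 5)$, for each ordered pair in $\mathcal H^2$ partition $C_3$ by the trichotomy of $|g(n)(\zeta(n))|$ versus $|f(n)(\zeta(n))|$ and retain the piece in $p$; after finitely many refinements one obtains $C_4 \in p$. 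For $(\star 6)$, apply the same Ramsey trichotomy to the ratios $g(n)(\zeta(n))/h(n)(\zeta(n))$ (denominators are nonzero because $\zeta(n) \in \supp h(n)$ on $C_1$), producing $C_5 \subseteq C_4$ in $p$.

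The main obstacle is $(\star 7)$: the hypothesis only gives $\zeta(n) \neq \zeta_*(n)$ pointwise on a $p$-set, whereas the conclusion demands that the ranges of $\zeta\restriction B'$ and $\zeta_*\restriction B'$ be disjoint. Let $C_6 := C_5 \cap \{n \in C : \zeta(n) \neq \zeta_*(n)\} \in p$. Split according to the shape of $\zeta$ on $C_6$ from $(\star 1)$: if $\zeta$ is constant with value $\mu$, then $\{n \in C_6 : \zeta_*(n) = \mu\} = \{n \in C_6 : \zeta_*(n) = \zeta(n)\} \notin p$, so its complement in $C_6$ is the desired $B'$. Otherwise $\zeta$ is 1-1 on $C_6$; apply selectivity to $\zeta_*\restriction C_6$ to obtain $C_7 \subseteq C_6$ in $p$ on which $\zeta_*$ is constant or 1-1. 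The constant sub-case is handled symmetrically. In the remaining sub-case where both $\zeta$ and $\zeta_*$ are 1-1 on $C_7$, define a graph $\Gamma$ on $C_7$ by $n \sim m$ iff $n \neq m$ and $\zeta(n) = \zeta_*(m)$ or $\zeta(m) = \zeta_*(n)$. Since each of $\zeta$, $\zeta_*$ is 1-1, every vertex has at most two neighbours, so $\Gamma$ has maximum degree $\leq 2$ and is $3$-colourable; writing $C_7 = I_1 \sqcup I_2 \sqcup I_3$ as a union of independent sets, some $I_k \in p$, and $B' := I_k$ works: pointwise inequality is inherited from $B'\subseteq C_6$, while the cross-term inequalities $\zeta(n) \neq \zeta_*(m)$ for $n \neq m$ follow from independence in $\Gamma$.
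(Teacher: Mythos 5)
Your proof is correct and simply spells out what the paper compresses into the single line ``Everything follows from the selectivity of $p$'': each of $(\star1)$--$(\star6)$ is obtained by one finite round of either the ultrafilter dichotomy or the Ramsey property, and then the pieces are intersected. The only step that does not reduce to a one-shot application of those is $(\star7)$, and there your analysis is the genuinely interesting part: after reducing to the case where both $\zeta$ and $\zeta_*$ are injective, the auxiliary graph $\Gamma$ has maximum degree $\le 2$, so a $3$-colouring exists and an ultrafilter chooses an independent part, which is exactly what is needed. One minor remark: in that last sub-case a direct Ramsey argument is available and slightly shorter --- colour pairs $n<m$ in $C_7$ by whether $\zeta(n)=\zeta_*(m)$, $\zeta(m)=\zeta_*(n)$, or neither; injectivity of $\zeta$ and $\zeta_*$ rules out a monochromatic set of the first two colours (three points $n<m<m'$ would force $\zeta_*(m)=\zeta_*(m')$ or $\zeta(m)=\zeta(m')$), so the Ramsey-homogeneous $B'\in p$ must have colour ``neither''. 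Either argument is fine; both buy the cross-term disjointness that the pointwise hypothesis does not directly give.
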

	
	\begin{proof} Everything follows from the selectivity of $p$.
	\end{proof}
	
	Notice that if $B \in p$ is such that $B\subseteq B'$, then $(\star 1)-(\star 8)$ also hold for $B$.\\

	\begin{lem}		\label{Case A2}
		Suppose that ${\mathcal G}$, $C$, $p$, $\zeta$, $\zeta_*$, $B'$ and $\mathcal H$ be as in Lemma \ref{Case A}.
		
		Suppose $g_\# \in \mathcal H$ is such that for every $g \in \mathcal H$, $(\frac{g(n)(\zeta(n))}{g_{\#}(n)(\zeta(n))})_{n\in B'}$ converges to a real number (or, equivalently, is bounded).
		
		Then there exist $B\in p$, with $B \subseteq B'$, $\mathcal B\subseteq \mathcal H$, $\sigma: \mathcal H\setminus \mathcal B\rightarrow G^\omega$ and a family of real numbers $(\theta_{g_\#, g}: g \in \mathcal H)$ such that:

		\begin{enumerate}[label=($\star$\arabic*)]
			\setcounter{enumi}{8}
			
			\item $g_\# \in \mathcal B$,
			\item $(\frac{g(n)(\zeta(n))}{g_{\#}(n)(\zeta(n))})_{n\in B}$ converges to $\theta_{g_{\#},g}$, for every $g \in {\mathcal H}$,
			\item $(\theta_{g_{\#},g}:\, g \in {\mathcal B})$ is a linearly independent set that generates the same $\mathbb Q$-vector space as $(\theta_{g_\#, g}: g \in \mathcal H)$,
			\item for each $g \in \mathcal H\setminus \mathcal B$, $\mathcal B\cup\{g\}$ and  $\mathcal B\cup\{\sigma(g)\}$ generate the same $\mathbb Q$-vector subspace of $G^\omega$,
			\item for each $g \in \mathcal H\setminus  \mathcal B$ and $h \in \mathcal B$, $\left(\frac{\sigma(g)(n)(\zeta(n))}{h(n)(\zeta(n))}\right)_{n\in B}$ converges to $0$,
			\item If $g \in \mathcal H\setminus \mathcal B$ and $\theta_{g_\#, g}=0$, then $\sigma(g)=g$.
			\item If $\zeta$ is constant and equal to $\nu$, $\mathcal B=\{\chi_{\vec\nu}\}$ and there exists $g^*\in\mathcal H$ such that  $(g^*(n)(\nu))_{n\in B}$ is not constant mod $p$, then $\{n\in\omega:\nu\in\supp\sigma(g^*(n))\}\in p$.
		\end{enumerate} 
		
	\end{lem}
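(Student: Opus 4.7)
The plan is straightforward linear algebra over $\mathbb{Q}$ combined with the convergence hypothesis. For each $g \in \mathcal{H}$, by assumption the sequence $(g(n)(\zeta(n))/g_\#(n)(\zeta(n)))_{n \in B'}$ converges to a real number; call it $\theta_{g_\#, g}$. Set $B = B'$. Since $\theta_{g_\#, g_\#} = 1$, the $\mathbb{Q}$-subspace $V \subseteq \mathbb{R}$ spanned by $\{\theta_{g_\#, g} : g \in \mathcal{H}\}$ has positive dimension, and we can choose $\mathcal{B} \subseteq \mathcal{H}$ containing $g_\#$ so that $(\theta_{g_\#, h} : h \in \mathcal{B})$ is a $\mathbb{Q}$-basis of $V$. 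For each $g \in \mathcal{H} \setminus \mathcal{B}$, let $(q_{g, h})_{h \in \mathcal{B}}$ be the unique rationals with $\theta_{g_\#, g} = \sum_{h \in \mathcal{B}} q_{g, h} \theta_{g_\#, h}$, and put $\sigma(g) := g - \sum_{h \in \mathcal{B}} q_{g, h} h \in G^\omega$.

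Items $(\star 9)$ through $(\star 12)$ and $(\star 14)$ are immediate from the construction: $(\star 9)$--$(\star 11)$ by choice of $\mathcal{B}$; $(\star 12)$ since $\sigma(g) - g \in \langle \mathcal{B} \rangle_{\mathbb{Q}}$; and $(\star 14)$ since linear independence forces all $q_{g, h} = 0$ whenever $\theta_{g_\#, g} = 0$. The content of $(\star 13)$ is also a direct computation: for $g \in \mathcal{H}\setminus\mathcal{B}$,
\[
\frac{\sigma(g)(n)(\zeta(n))}{g_\#(n)(\zeta(n))} \;=\; \frac{g(n)(\zeta(n))}{g_\#(n)(\zeta(n))} - \sum_{h \in \mathcal{B}} q_{g, h} \frac{h(n)(\zeta(n))}{g_\#(n)(\zeta(n))} \;\longrightarrow\; \theta_{g_\#, g} - \sum_{h \in \mathcal{B}} q_{g, h} \theta_{g_\#, h} = 0.
\]
For arbitrary $h \in \mathcal{B}$ one divides by $h(n)(\zeta(n))/g_\#(n)(\zeta(n)) \to \theta_{g_\#, h}$; the crux is that $\theta_{g_\#, h} \neq 0$, since a zero vector cannot participate in a linearly independent family, so the quotient is eventually well-defined and tends to $0$. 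This nonvanishing of $\theta_{g_\#, h}$ for basis elements $h$ is really the only subtlety in the whole argument; everything else is bookkeeping.

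Finally, for $(\star 15)$: when $\zeta = \vec{\nu}$ and $\mathcal{B} = \{\chi_{\vec{\nu}}\}$ we necessarily have $g_\# = \chi_{\vec{\nu}}$, so $g_\#(n)(\zeta(n)) = 1$ for all $n \in B$, and hence $\theta_{g_\#, g^*} = \lim_{n\in B} g^*(n)(\nu) = q$ is a rational with $\sigma(g^*) = g^* - q\, \chi_{\vec{\nu}}$. The hypothesis that $(g^*(n)(\nu))_{n \in B}$ is not constant mod $p$ means that $\{n : g^*(n)(\nu) = q\} \notin p$, so its complement $\{n : \nu \in \supp \sigma(g^*)(n)\} = \{n : g^*(n)(\nu) \neq q\} \in p$, as required.
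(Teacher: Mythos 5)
Your proof is correct and follows the same strategy as the paper: take $B = B'$, read off $\theta_{g_\#,g}$ as the given limit, extend $\{\theta_{g_\#,g_\#}\}=\{1\}$ to a $\mathbb Q$-basis of $\operatorname{span}_{\mathbb Q}\{\theta_{g_\#,g}:g\in\mathcal H\}$ chosen from within $\{\theta_{g_\#,g}:g\in\mathcal H\}$ to get $\mathcal B$, and set $\sigma(g)=g-\sum_{h\in\mathcal B}r_{g,h}h$ with rational coefficients from the expansion in that basis. The paper stops after defining $\mathcal B$ and $\sigma$ and leaves $(\star 9)$--$(\star 15)$ unverified; your fleshing out of $(\star 13)$ (using that $\theta_{g_\#,h}\neq 0$ for $h\in\mathcal B$ because a basis contains no zero vector) and of $(\star 15)$ (noting $g_\#=\chi_{\vec\nu}$ forces $q=\theta_{g_\#,g^*}$ rational, and then the ultrafilter argument) are exactly the missing bookkeeping.
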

	
	\begin{proof}	
		Consider $\{\theta_{g_{\#},g} :\, g \in {\mathcal H}\}$ as a subset of the ${\mathbb Q}$-vector space ${\mathbb R}$ and take ${\mathcal B}\subseteq {\mathcal H}$ containing $g_{\#}$ such that $(\theta_{g_{\#},h}:\, h \in {\mathcal B})$ is a basis for the vector space generated by $\{\theta_{g_{\#},g} :\, g \in {\mathcal H}\}$.
		
		For the existence of $\sigma$, define $(r_{g, h}: g \in \mathcal H, h \in \mathcal B)$ by the expressions $\theta_{g_{\#}, g}=\sum_{h \in \mathcal B}r_{g,h} \theta_{g_{\#}, h}$. Now define 
		$\sigma(g)= g - \sum_{h \in {\mathcal B}} r_{g,h} h$ for each $g \in \mathcal H\setminus \mathcal B$.
		
	\end{proof}

	\begin{lem} \label{brickline}
		Let $B\in p$. Suppose that $\zeta \in \kappa^\omega$, $m\in\omega$, $\zeta_i\in\kappa^\omega$ for $i<m$, are such that $\{n \in B:\forall i<m\; \zeta(n)\neq\nu_i\} \in p$. Suppose $\mathcal G$ is a finite subset of $G^\omega$ whose elements are distinct mod $p$, none of them is constant mod $p$  and such that $\{[f]_p:f\in \mathcal G\}\cup\{[\chi_{\vec\xi}]_p: \xi \in \kappa \}$ is linearly independent and there exists $g\in\mathcal G$ such that $\{n\in\omega:\zeta(n)\in\supp g(n)\}\in p$.
		
		If $\zeta$ is constant, let $\nu$ be its value.

		Then there exist finite $\mathcal G'\subset G^\omega$, $l\in\omega\setminus\{0\}$, finite non-empty $\mathcal B_j\subset G^\omega$ for each $j<l$, $A\subset B$ such that:
		\begin{enumerate}
			\item For every $g\in\mathcal G'$, $\{n\in\omega:\zeta(n)\in\supp g(n)\}\notin p$ and $\{n\in\omega:\nu_i\in\supp g(n)\}\notin p$ for each $i<m$,
			\item $\mathcal B_j\cap\mathcal B_{j'}=\void$ for $j\ne j'$, $\mathcal B_j\cap\mathcal G'=\void$ for each $j<l$ and $\{[f]_p:f\in\mathcal G'\cup\bigcup_{j<l}\mathcal B_j\}$ is a linearly independent subset of $G^\omega/p$. Also, if $f, h\in\mathcal G'\dot\cup\dot\bigcup_{k<l}\mathcal B_k$ are distinct, then $[f]_p\neq[h]_p$.
			\item As vector subspaces of $G^\omega$, $\langle\mathcal G'\cup\bigcup_{j<l}\mathcal B_j\rangle=\langle\mathcal G\cup\{\chi_{\vec{\nu}}\}\rangle$ if $\zeta$ is constant and $\langle\mathcal G'\cup\bigcup_{j<l}\mathcal B_j\rangle=\langle\mathcal G\rangle$ otherwise,
			\item $\zeta(n) \in \supp h(n)$, for each $j<l$, $h \in {\mathcal B}_{j}$ and $n\in A$,
			\item $\nu_i \notin \supp h(n)$, for each $i<m$, $j< l$ and $h \in {\mathcal B}_{j}$ and $n \in A$,
			\item $(h(n)(\zeta(n)))_{n\in A}$ converges strictly monotonically to an element of the extended real line, or it is constant and equal to a rational number, for each  $j<l$ and $h\in {\mathcal B}_{j}$,
			\item For every $j<l$, there exists $h_* \in \mathcal B_{j}$ such that for every $h \in \mathcal B_{j}$, $\left(\frac{h(n)(\zeta(n))}{h_{*}(n)(\zeta(n))}\right)_{n\in A}$ converges to a real number $\theta_{h_*}^h$ and $(\theta_{h_*}^h:\,h \in {\mathcal B}_{j})$ is linearly independent (as a $\mathbb Q$-vector space),
			\item For each $j'<j<l$, $h \in {\mathcal B}_{j}$ and $h'\in {\mathcal B}_{j'}$ $\left(\frac{h(n)(\zeta(n))}{h'(n)(\zeta(n))}\right)_{n\in A}$ converges monotonically to $0$,
			\item If $\zeta$ is constant, there exists $j<l$ such that $\chi_{\vec{\nu}}\in{\mathcal B}_{j}$,
			\item  For each $j<l$ and distinct $h, h' \in {\mathcal B}_{j}$, either 
			
			\begin{itemize}
				\item $|h(n)(\zeta(n))|> |h'(n)(\zeta(n))|$  for each  $n \in A$, or
				\item   $|h(n)(\zeta(n))|= |h'(n)(\zeta(n))|$  for each  $n \in A$, or
				\item  $|h(n)(\zeta(n))|< |h'(n)(\zeta(n))|$  for each  $n \in A$.
			\end{itemize}
			\item No element of $\mathcal G'$ is constant mod $p$ and $\{[f]_p:f\in \mathcal G'\}\cup\{[\chi_{\vec\xi}]_p: \xi \in \kappa \}$ is linearly independent,
			\item if $i<m$ and $n, n' \in A$ are distinct, then $\zeta(n)\neq \zeta_i(n')$, and
			\item $|\mathcal G'|<|\mathcal G|$.
		\end{enumerate}

		\begin{proof}
			
			Since $p$ is selective, we may suppose by shrinking $B$ is necessary that if $i<m$ and $n, n' \in B$ are distinct, then $\zeta(n)\neq \zeta_i(n')$. Clearly, this property will hold for any subset of $B$.
			
			If $\zeta$ is constant, let $\mathcal G_{0}=\mathcal G\cup\{\chi_{\vec\nu}\}$. If not, let $\mathcal G_0=\mathcal G$. We will construct:

			\begin{itemize}
				\item $A_{j} \in p$ for $j\in \omega$, with $A_j\subseteq B$,
				\item $\mathcal G_{j}\subseteq G^\omega$ for $j\in \omega$,
				\item $\mathcal H_{j}\subseteq{\mathcal G}_{j}$ for $j\in \omega$,
				\item ${\mathcal B}_{j}\subseteq\mathcal H_{j}$ for $j\in \omega$,
				\item $\sigma_{j}:\mathcal H_{j}\setminus{\mathcal B}_{j}\rightarrow G^\omega$ for $j\in \omega$,
			\end{itemize} 
			satisfying:	
			\begin{enumerate}[label=\roman*)]
				\item $A_{j}\subseteq A_{j-1}$ for every $j<l$,
				\item for each $j<l$, $\{[f]_p:f\in\mathcal G_j\dot\cup\dot\bigcup_{k<j}\mathcal B_k\}$ is a linearly independent subset of $G^\omega/p$. Also, if $f, h\in\mathcal G_j\dot\cup\dot\bigcup_{k<j}\mathcal B_k$ are distinct, then $[f]_p\neq[h]_p$.
				
				\item if $j\in\omega$, $h \in {\mathcal H}_{j}$ and $n\in A_j$, then $\zeta(n) \in \supp h(n)$,
				\item if $j\in\omega$, $h \in {\mathcal G}_j\setminus {\mathcal H}_{j}$ and $n\in A_j$, then $\zeta(n) \notin \supp h(n)$,
				
				\item if $j\in\omega$, $h \in {\mathcal H}_{j}$, $i<m$ and $n\in A_j$, then $\nu_i \notin \supp h(n)$,
				\item if $j\in\omega$ and $h \in {\mathcal H}_{j}$, then  $(h(n)(\zeta(n)))_{n\in A_j}$ converges strictly monotonically to an element of the extended real line, or it is constant and equal to a rational number,
				\item for every $j\in\omega$, $\mathcal B_j\neq \void$ iff $\exists g \in \mathcal G_j\,\{n\in \omega: \zeta(n) \in \supp g(n)\}\in p$,
				\item for every $j<\omega$, if $\mathcal B_j\neq \void$, then there exists $h_* \in \mathcal B_{j}$ such that for every $h \in \mathcal B_{j}$, $\left(\frac{h(n)(\zeta(n))}{h_{*}(n)(\zeta(n))}\right)_{n\in A_j}$ converges to a real number $\theta_{h_*}^h$ and $(\theta_{h_*}^h:\,h \in {\mathcal B}_{j})$ is linearly independent (as a $\mathbb Q$-vector space),
				\item for every $j, j' \in \omega$, if $j'<j$, $h \in {\mathcal B}_{j}$ and $h'\in {\mathcal B}_{j'}$, then $\left(\frac{h(n)(\zeta(n))}{h'(n)(\zeta(n))}\right)_{n\in A_j}$ converges monotonically to $0$,
				\item if $\zeta$ is constant, there exists $j\in \omega$ such that $\chi_{\vec{\nu}}\in{\mathcal B}_{j}$,
				\item if $j\in \omega$ and $h, h' \in {\mathcal B}_{j}$, either 
				
				\begin{itemize}
					\item $|h(n)(\zeta(n))|> |h'(n)(\zeta(n))|$  for each  $n \in A_j$, or
					\item   $|h(n)(\zeta(n))|= |h'(n)(\zeta(n))|$  for each  $n \in A_j$, or
					\item  $|h(n)(\zeta(n))|< |h'(n)(\zeta(n))|$  for each  $n \in A_j$.
				\end{itemize}

				\item for each $j\in \omega$ and $g \in \mathcal H_j$, $(|g(n)(\zeta(n))|)_{n\in A_j}$ is either constant or strictly increasing,
				\item for each $j\in \omega$ and $g \in \mathcal H_j\setminus \mathcal B_j$, $\mathcal B_j\cup\{g\}$ and  $\mathcal B_j\cup\{\sigma(g)\}$ generate the same $\mathbb Q$-vector subspace of $G^\omega$,
				\item for each $j\in \omega$, $g \in \mathcal G_j$ and $i<m$, $\{n\in\omega:\nu_i\in\supp g(n)\}\notin p$,
				\item for each $j, j'\in \omega$, $\mathcal G_j\cup\bigcup_{k< j}\mathcal B_k$ generates the same subspace of $G^\omega$ as $\mathcal G_{j'}\cup\bigcup_{k< j'}\mathcal B_k$\:
				\item if $\zeta$ is constant, then for each $j\in \omega$, $\chi_{\vec\nu} \in \bigcup_{k< j}\mathcal B_j\cup \mathcal G_j$,
				\item $\mathcal G_{j+1}=(\mathcal G_j\setminus \mathcal H_j) \cup \ran \sigma_j$, and
				\item if $\zeta$ is constant, $j \in \omega$ and $\chi_{\vec \nu} \in \mathcal H_j\setminus \mathcal B_j$, then $\sigma_j(\chi_{\vec \nu})=\chi_{\vec \nu}$.
				\item if $\zeta$ is constant and $\mathcal B_0=\{\chi_{\vec \nu}\}$, then there exists $g \in \mathcal H_0\setminus \mathcal B_0$ such that $\{n \in \omega: \nu \in \supp \sigma_0(g)\} \in p$.

			\end{enumerate}
			
			Suppose we have carried on such a recursion. By ii) and xviii), one of the $\mathcal B_j$'s must be empty. Let $l$ be the first $j$ such that $\mathcal B_j=\void$. By vii), $\forall g \in \mathcal G_l,\{n \in \omega: \zeta(n) \notin g(n)\}\in p$. Since there exists $g\in\mathcal G$ such that $\{n\in\omega:\zeta(n)\in\supp g(n)\}\in p$, it follows that $j>0$. Let $A=A_{l-1}$, $\mathcal G'=\mathcal G_{l}$. Notice that every $\mathcal B_j$ is nonempty for $j<l$.
			
			(1) holds by the previous observation, i), v) and by the fact that $\mathcal B_j\subseteq\mathcal H_j$. (2) holds by (ii). (3) follows from xvii) using $j=l$, $j'=0$. (4)-(8), (10) and (11) follow easily from (i), (iii)-(vii), (viii), (ix), (xi) and (xii). Suppose $(9)$ doesn't hold. Then by xviii), $\chi_{\vec\nu} \in \mathcal G_l$. But then, by vii), $\mathcal B_l\neq \void$, a contradiction.
			
			(13) holds by ii), because $\langle\mathcal G'\rangle\subseteq \langle \mathcal G_0\rangle$ and because, if $\zeta$ is constant then, by xviii) and (9), $\mathcal G'\cup \{\chi_{\vec \nu}\}$ is linearly independent.
			
			(14) holds: if $\zeta$ is not constant, it follows from (2) and (3). If it is constant, first, notice that, by xxi), xix) for $j=0$, and vii) for $j=1$, it follows that $l>1$ or $\mathcal B_0\neq \{\chi_{\vec \nu}\}$. Either way, $\mathcal B=\bigcup_{i<l}\mathcal B_i\setminus \{\chi_{\vec \nu}\}$ is nonempty. By (2), (3) and (9), analyzing dimensions it follows that $1+|\mathcal B|+|\mathcal G'|=1+|\mathcal G|$, so $|\mathcal G'|<|\mathcal G|$.
			
			\textbf{Construction:} For step $0$, $\mathcal G_0$ is already defined. We apply Lemma \ref{Case A} $m$ times using $\zeta(n)=\zeta(n)$ and $\zeta_*(n)=\nu_i$ for every $n$. If $m=0$ we apply it once using $\zeta_*(n)=\zeta(n)'$ for every $n$ for some $\zeta(n)'\neq \zeta(n)$. We now have $\mathcal H_0$ and $A'_0\subset B$.
			
			If it is the case that $(h(n)(\zeta(n)))_{n\in A_0}$ converges to a real number for every $h\in\mathcal H_0$ and that $\zeta$ is constant, then we apply Lemma \ref{Case A2} with $g_\#=\chi_{\vec\nu}$, and obtain $A_0\subset A'_0$, $\mathcal B_0\subset\mathcal H_0$ and $\sigma_0$. If not, then we take any $g_\#\in\mathcal H_0$ that satisfies the hypothesis of Lemma \ref{Case A2} -- one does exist because of ($\star5$), which also implies that for such a $g_\#$, $\left(\frac{1}{g_\#(n)(\zeta(n))}\right)_{n\in A_0}$ converges to 0, and thus in case $\zeta$ is constant,  $\sigma_0(\chi_{\vec\nu})=\chi_{\vec\nu}$. Either way, we obtain $\sigma_0$, $\mathcal B_0$ and $\mathcal A_0$.  It is straightforward to verify that $(i)-(xxi)$ hold for this step. 
			
			For the inductive step, we define $\mathcal G_{j+1}$ as in xix). If $\nexists g \in \mathcal G_{j+1}\,\{n\in \omega: \zeta(n) \in \supp g(n)\}\in p$, then we define $\mathcal H_{j+1}=\void$, $A_{j+1}\subseteq A_j$ satisfying v) with $j$ swapped by $j+1$ and $\mathcal B_{j+1}=\sigma_{j+1}=\void$. If not, we proceed as in step $0$: we first apply Lemma \ref{Case A} to obtain $\mathcal H_{j+1}$ and $A'_{j+1}\subset A_j$ and then similarly apply Lemma \ref{Case A2} to obtain $\mathcal B_{j+1}$, $A_{j+1}$ and $\sigma_{j+1}$.	It is straightforward to verify that (i)-(xix) hold for this step.

		\end{proof}

	\end{lem}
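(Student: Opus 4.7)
The plan is to peel $\mathcal{G}$ into bricks $\mathcal{B}_0, \ldots, \mathcal{B}_{l-1}$ by strictly decreasing orders of magnitude of $|h(n)(\zeta(n))|$, handling one layer per recursive stage. Preliminary setup: shrink $B$ once using selectivity of $p$ so that $\zeta(n) \neq \zeta_i(n')$ for all distinct $n,n' \in B$ and $i<m$; this property is then inherited by every subset. If $\zeta$ is the constant $\vec\nu$, augment to $\mathcal{G}_0 = \mathcal{G} \cup \{\chi_{\vec\nu}\}$ (so that $\chi_{\vec\nu}$ will land in some brick, as (9) demands); otherwise $\mathcal{G}_0 = \mathcal{G}$. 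In either case $\{[f]_p : f \in \mathcal{G}_0\} \cup \{[\chi_{\vec\mu}]_p:\mu\in\kappa\}$ remains linearly independent.

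At stage $j$, given $\mathcal{G}_j$ and $A_{j-1}$, I apply Lemma \ref{Case A} iteratively over the elements of $\mathcal{G}_j$ (using $\zeta_* = \vec{\nu_i}$ in turn for $i<m$, or a dummy value when $m=0$) to extract $A'_j \subseteq A_{j-1}$ on which: (i) each $g \in \mathcal{G}_j$ either has $\zeta(n) \in \supp g(n)$ throughout $A'_j$ or never, inducing a partition $\mathcal{G}_j = \mathcal{H}_j \mathbin{\dot\cup} (\mathcal{G}_j \setminus \mathcal{H}_j)$; (ii) no $\nu_i$ lies in $\supp g(n)$ for $g \in \mathcal{H}_j$ and $n \in A'_j$; (iii) the sequences $(g(n)(\zeta(n)))$ for $g \in \mathcal{H}_j$ are monotone or constant, with stabilized pointwise order of magnitude. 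If $\mathcal{H}_j = \void$, terminate with $l = j$. Otherwise pick a ``slowest'' $g_\# \in \mathcal{H}_j$ --- choosing $g_\# = \chi_{\vec\nu}$ whenever admissible in the constant case, since $\chi_{\vec\nu}$ is minimal in order of magnitude by clause $(\star5)$ of Lemma \ref{Case A} --- and apply Lemma \ref{Case A2} to extract a brick $\mathcal{B}_j \ni g_\#$ whose limit ratios to $g_\#$ are $\mathbb{Q}$-linearly independent, together with a map $\sigma_j \colon \mathcal{H}_j \setminus \mathcal{B}_j \to G^\omega$ that subtracts off the $\mathcal{B}_j$-component. Set $\mathcal{G}_{j+1} = (\mathcal{G}_j \setminus \mathcal{H}_j) \cup \sigma_j[\mathcal{H}_j \setminus \mathcal{B}_j]$; by construction, $\sigma_j(g)(n)(\zeta(n))/h(n)(\zeta(n)) \to 0$ for each $h \in \mathcal{B}_j$, which will yield (8) at the next brick.

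Termination is automatic: the $\mathbb{Q}$-dimension of $\langle \mathcal{G}_j \cup \bigcup_{k<j} \mathcal{B}_k \rangle$ remains constant and finite while each non-terminal step strips away a nonempty independent block, so after finitely many steps $\mathcal{H}_l = \void$. Setting $A = A_{l-1}$ and $\mathcal{G}' = \mathcal{G}_l$, conditions (4)--(8), (10), (11) are immediate from the invariants collected stage by stage, (12) from the preliminary shrinking, (1) from the termination criterion combined with (ii), (9) from forcing $\chi_{\vec\nu}$ into $\mathcal{B}_0$, and (2)--(3) from careful tracking of the $\sigma_j$'s, which preserve the ambient subspace and linear independence mod $p$. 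For the strict drop $|\mathcal{G}'| < |\mathcal{G}|$: in the non-constant case some $\mathcal{B}_j$ is nonempty by the hypothesis that some $g \in \mathcal{G}$ has $\zeta(n) \in \supp g(n)$ $p$-often, so $|\mathcal{G}'| \le |\mathcal{G}| - 1$; in the constant case the extra $\chi_{\vec\nu}$ accounts for exactly one of the consumed dimensions, and the same inequality follows provided a genuinely non-$\chi_{\vec\nu}$ element is also stripped at some stage.

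The main obstacle is precisely this last point: when $\mathcal{B}_0 = \{\chi_{\vec\nu}\}$, I must ensure that at least one $\sigma_0(g)$ still satisfies $\nu \in \supp \sigma_0(g)(n)$ on a $p$-set, so that the recursion genuinely continues past the trivial first brick. This is exactly the content of the final clause of Lemma \ref{Case A2}, and invoking it forces the initial choice $g_\# = \chi_{\vec\nu}$ together with a witness $g^* \in \mathcal{H}_0$ whose value at $\nu$ is non-constant mod $p$; such a $g^*$ exists because, by linear independence of $\{[f]_p:f\in\mathcal G\}\cup\{[\chi_{\vec\mu}]_p\}$, no $g \in \mathcal{G}$ is itself equal to a rational multiple of $\chi_{\vec\nu}$ mod $p$. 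Tracking this invariant through the recursion is the delicate part of the bookkeeping, but once set up it closes the argument.
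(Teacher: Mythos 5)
Your overall scaffolding (shrink $B$, augment to $\mathcal G_0$, iterate Lemma \ref{Case A} then Lemma \ref{Case A2}, terminate by dimension, verify invariants) matches the paper's strategy, but two key steps are wrong in a way that breaks the construction.

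First, the choice of $g_\#$ runs in the wrong direction. The hypothesis of Lemma \ref{Case A2} requires that $\bigl(g(n)(\zeta(n))/g_\#(n)(\zeta(n))\bigr)_n$ be \emph{bounded} for every $g\in\mathcal H_j$, i.e.\ $g_\#$ must be of \emph{maximal} order of magnitude. You propose picking a ``slowest'' $g_\#$, justified by $\chi_{\vec\nu}$ being ``minimal in order of magnitude'', but minimality is precisely what makes $\chi_{\vec\nu}$ \emph{inadmissible} when some $h\in\mathcal H_0$ has $h(n)(\nu)$ unbounded: then $h(n)(\nu)/\chi_{\vec\nu}(n)(\nu)=h(n)(\nu)\to\pm\infty$ and Lemma \ref{Case A2} cannot be applied. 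The paper takes $g_\#=\chi_{\vec\nu}$ \emph{only} in the special case where $\zeta$ is constant and every $h(n)(\zeta(n))$ converges to a real; otherwise it picks any element of maximal order and uses clause ($\star14$) to conclude $\sigma_0(\chi_{\vec\nu})=\chi_{\vec\nu}$, so that $\chi_{\vec\nu}$ propagates into a \emph{later} brick. Your plan of always ``forcing $\chi_{\vec\nu}$ into $\mathcal B_0$'' is therefore not achievable, and your recursion would stall at the unbounded case.

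Second, the justification for the existence of $g^*$ with $(g^*(n)(\nu))$ non-constant mod $p$ is incorrect. You argue that linear independence of $\{[f]_p:f\in\mathcal G\}\cup\{[\chi_{\vec\mu}]_p:\mu\in\kappa\}$ implies no $g\in\mathcal G$ is a rational multiple of $\chi_{\vec\nu}$ mod $p$, hence some $g$ has non-constant value at $\nu$. But ``non-constant value at $\nu$'' is much stronger than ``not a rational multiple of $\chi_{\vec\nu}$ mod $p$'': take $g(n)=\chi_\nu+\chi_{\mu_n}$ with the $\mu_n$ pairwise distinct and distinct from $\nu$. Then $g$ is $1$-$1$, $\{[g]_p\}\cup\{[\chi_{\vec\mu}]_p\}$ is linearly independent, yet $g(n)(\nu)=1$ for all $n$. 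So the implication you invoke fails, and the existence of the witness $g^*$ needed for clause ($\star15$) of Lemma \ref{Case A2} does not follow from the cited linear independence. (When Lemma \ref{brickline} is actually invoked in the proof of Lemma \ref{Lem.stack.1}, the constant $\zeta=\vec{\nu_m}$ is chosen so that some $g$ has $(g(n)(\nu_m))$ one-to-one on a $p$-set, which is what really supplies $g^*$ --- but that extra information is not part of your argument and cannot be recovered from the hypotheses you list.)
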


	\begin{lem} \label{aux lem}
		Suppose $\mathcal G$ is a finite subset of $G^\omega$ whose elements are distinct mod $p$ and none of them is constant mod $p$ such that $\{[f]_p:f\in \mathcal G\}\cup\{[\chi_{\vec\nu}]_p:\nu\in\kappa\}$ is a linearly independent subset of $G^\omega/p$. Then: either there exist $\mu\in\kappa$, $g^*\in\mathcal G$ and $A\in p$ such that $(g^*(n)(\mu))_{n\in A}$ is one-to-one, or there exists $A\in p$ such that for every $g\in\mathcal G$ there exists $\zeta_g\in\kappa^\omega$ satisfying $\zeta_g(n)\in\supp g(n)$ for all $n\in A$ and $\zeta_g|A$ is one-to-one.
	\end{lem}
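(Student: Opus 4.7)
The plan is to prove the contrapositive: assume that the first alternative fails and construct the data witnessing the second. Since $\mathbb Q$ is countable, failure of the first alternative combined with selectivity of $p$ forces that for every $g\in\mathcal G$ and every $\mu\in\kappa$ the sequence $(g(n)(\mu))_{n\in\omega}$ is constant on some element of $p$. I will construct, separately for each $g\in\mathcal G$, a set $A_g\in p$ and a function $\zeta_g\in\kappa^\omega$ with $\zeta_g(n)\in\supp g(n)$ on $A_g$ and $\zeta_g|A_g$ one-to-one; since $\mathcal G$ is finite, $A=\bigcap_{g\in\mathcal G}A_g\in p$ will be the required common set.

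Fix $g\in\mathcal G$. Because $g$ is not constant mod $p$ we have $\{n:\supp g(n)\ne\void\}\in p$, and selectivity applied to $n\mapsto|\supp g(n)|$ gives $A_g\in p$ on which this cardinality is either strictly increasing or constant equal to some $k\ge 1$. The strictly increasing case is straightforward: enumerating $A_g=\{n_0<n_1<\cdots\}$ we have $|\supp g(n_i)|\ge i+1$, so a greedy choice of $\zeta_g(n_i)\in\supp g(n_i)$ distinct from the previously selected $\zeta_g(n_0),\ldots,\zeta_g(n_{i-1})$ is always available.

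The main obstacle is the constant-$k$ case, and I would handle it as follows. Define $S_g=\{\mu:\{n:g(n)(\mu)\ne 0\}\in p\}$; by failure of the first alternative, each $\mu\in S_g$ has $g(n)(\mu)$ constant equal to some nonzero $c_{g,\mu}\in\mathbb Q$ on some $A_{g,\mu}\in p$, and counting supports forces $|S_g|\le k$. If $|S_g|=k$ then on $A_g\cap\bigcap_{\mu\in S_g}A_{g,\mu}\in p$ one would have $g(n)=\sum_{\mu\in S_g}c_{g,\mu}\chi_\mu$, contradicting that $g$ is not constant mod $p$. Hence $|S_g|<k$ and the ``petals'' $E_n:=\supp g(n)\setminus S_g$ are nonempty on a $p$-set $A'$; a pigeonhole argument using that every element of $E_n$ lies outside $S_g$ shows $\bigcup_{n\in A'}E_n$ is infinite. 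Enumerating this union as $\{\mu_i:i\in\omega\}$ and defining $\nu(n)$ to be $\mu_i$ for the least $i$ with $\mu_i\in E_n$ reduces the problem to a countable-range instance of selectivity: I obtain $A_g'\subseteq A'$ in $p$ on which $\nu$ is constant or one-to-one, and constancy would force the constant value into $S_g$, a contradiction. So $\nu|A_g'$ is one-to-one and $\zeta_g:=\nu$ (extended arbitrarily off $A_g'$) completes the construction. The conceptual point is the split between $S_g$ (coordinates that are $p$-eventually constant and nonzero, hence contribute a constant piece to $g$) and the varying petals $E_n$: any one-to-one selector must draw from the petals, and the obstruction to that (the selector being constant) is precisely the obstruction to $\mu_i$ lying outside $S_g$.
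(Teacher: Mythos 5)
Your proof is correct, and the overall architecture matches the paper: work separately with each $g$, apply selectivity to $n\mapsto|\supp g(n)|$, handle the strictly increasing case greedily, and concentrate the work in the constant-$k$ case, using the ``not constant mod~$p$'' hypothesis to rule out the degenerate situation. Where you diverge is inside the constant-$k$ case. The paper fixes, for each $n$, an arbitrary enumeration $\supp g(n)=\{w_0(n),\ldots,w_{k-1}(n)\}$, applies selectivity to each of the $k$ coordinate functions $w_i$ (their common range is countable since $\bigcup_n\supp g(n)$ is), and notes that if every $w_i$ were constant then $g$ would agree $p$-often with a finite rational combination of $\chi_\mu$'s, contradicting independence from $\{[\chi_{\vec\nu}]_p:\nu\in\kappa\}$; so some $w_j$ is one-to-one and serves as $\zeta_g$. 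You instead isolate the intrinsic set $S_g=\{\mu:\{n:g(n)(\mu)\ne0\}\in p\}$ of $p$-persistent coordinates, show $|S_g|\le k$ by intersecting finitely many $p$-sets against the bounded support, rule out $|S_g|=k$ via the same non-constancy contradiction, and then run selectivity on a selector into the nonempty petals $E_n=\supp g(n)\setminus S_g$, using the pigeonhole and the definition of $S_g$ to kill both the finite-union and the constant-selector alternatives. The two arguments exploit the same two ingredients (selectivity for countable-range maps, and non-constancy mod~$p$), but yours trades the paper's ad~hoc pointwise enumeration for a canonical decomposition of the support into constant and varying parts; the paper's version is a bit more compact, while yours makes the structural dichotomy explicit without any arbitrary choice of enumeration. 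One small notational remark: in the constant-$k$ branch you introduce $A'$ and then $A_g'\subseteq A'$, but the final intersection $A=\bigcap_{g}A_g$ at the top of your proof should be taken over these refined sets $A_g'$, not the initial $A_g$ produced by selectivity on $|\supp g(n)|$; the argument is clearly intended that way, just align the names.
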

	
	\begin{proof}
		Suppose that for all $\mu\in\kappa$, for all $g\in\mathcal G$ and for all $A\in p$, $(g(n)(\mu))_{n\in A}$ is not one-to-one. Then by the selectivity of $p$, for all $\mu\in\kappa$ there exist $B_\mu\in p$ such that for all $g\in\mathcal G$, $(g(n)(\mu))_{n\in B_\mu}$ is constant.\\
		Fix a $g\in\mathcal G$. By selectivity, there exists $B\in p$ such that either the sequence $(|\supp g(n)|)_{n\in B}$ is strictly increasing or it is constant. If it is strictly increasing, then we may pick recursively $\tilde\zeta_g(n)\in\supp g(n)$ for each $n\in B$ in a way that $\tilde\zeta_g$ is one-to-one. Define $A_g=B$.\\
		If it is constant, let $k\in\omega$ be that constant. Since $g$ is not 0 mod $p$, $k\ge1$. For each $i<k$, let $w_i\in\kappa^\omega$ be such that $\supp g(n)=\{w_0(n),\ldots,w_{k-1}(n)\}$ for each  $n\in B$. Then there exists $C\subset B$, $C\in p$ such that for each $i<k$, $(w_i(n))_{n\in C}$ is one-to-one or constant.\\
		We claim that there is a $j<k$ such that $(w_j(n))_{n\in C}$ is one-to-one. Suppose all of them are constant; take $\mu_j$ for each $j<k$ so that $w_j(n)=\mu_j$ for all $n\in C$. Then, since $(g(n)(\mu_j))_{n\in B_{\mu_j}}$ is constant for each $j<k$, let $r_j$ be those constants. Let $D=C\cap\bigcap_{j<k}B_{\mu_j}$. We have that $D\in p$ and $g(n)=\left(\sum_{j<k}r_j\chi_{\overrightarrow{\mu_j}}\right)(n)$ for all $n\in D$, and so $[g]_p=\sum_{j<k}r_j[\chi_{\overrightarrow{\mu_j}}]_p$. This contradicts the hypothesis that $\{[f]_p:f\in \mathcal G\}\cup\{[\chi_{\vec\nu}]_p:\nu\in\kappa\}$ is a linearly independent subset of $G^\omega/p$.\\
		Therefore, there exists a $j<k$ such that  $(w_j(n))_{n\in C}$ is one-to-one, and so we define $\tilde\zeta_g=w_j$ and $A_g=C$.\\
		Thus if we define $A=\bigcap_{g\in\mathcal G}A_g$ and $\zeta_g\in\kappa^\omega$ such that $\zeta_g|A=\tilde\zeta_g|A$, we have the desired result.
	\end{proof}

	Now we restate Lemma \ref{Lem.stack.1}, which we are going to prove.
	
	\begin{lemnumber} 
		Let $B \in p$ and $\mathcal G$ be a finite subset of $G^\omega$ whose elements are distinct mod $p$ and none of them is constant mod $p$ such that $\{[f]_p:f\in \mathcal G\}\cup\{[\chi_{\vec\nu}]_p:\nu\in\kappa\}$ is linearly independent. Then
		there exist a rational stack  $\mathcal S=\langle  \mathcal B, \nu, \zeta, K, A, k_0, k_1, l\rangle$ and a positive integer $S$ such that, by defining
		${\mathcal A}={\mathcal G} \cup \{ \chi_{\vec{\nu_i}}:\, i<k_0\}$ and ${\mathcal C}= \frac{\bigcup_{i< k_1, j<l_i} {\mathcal B}_{i,j}}{{K}}$, there exist  ${\mathcal M}:\mathcal A\times \mathcal C\rightarrow \mathbb Z$, ${\mathcal N}:\mathcal C\times \mathcal A\rightarrow \mathbb Z$ satisfying:
		
		\begin{enumerate}[label=(\arabic*)]
			\item $\{ [f]_p:\, f \in {\mathcal A} \}$ and $\{ [h]_p:\, h \in {\mathcal C} \}$ generate the same subspace;
			\item $f(n) =\sum_{h \in {\mathcal C}}{\mathcal M}_{f,h}h(n)$, for each $n\in A$ and $f \in {\mathcal A}$,
			\item $h(n) = \frac{1}{S}.\sum_{f\in{\mathcal A}}{\mathcal N}_{h,f}f(n)$, for each $n\in A$ and $h\in{\mathcal C}$,
			
			\item $K\mathcal A\subseteq H^\omega$,
			\item $K\mathcal C\subseteq H^\omega$, and
			\item $A\subseteq B$.
		\end{enumerate}
	\end{lemnumber}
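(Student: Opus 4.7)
The plan is to build the stack level by level by iteratively applying Lemma~\ref{brickline}, processing the constant levels (indices $i<k_0$) first and the non-constant ones afterwards, and then to scale the resulting raw bricks by $K$ (and by $K/T$ for the characteristic functions $\chi_{\vec{\nu_i}}$) to convert the rational change-of-basis into the integer matrices $\mathcal{M}$ and $\mathcal{N}$. Since conclusion~(13) of Lemma~\ref{brickline} forces $|\mathcal{G}'|<|\mathcal{G}|$ at every application, the induction terminates in at most $|\mathcal{G}|$ steps.

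Starting with $\mathcal{G}_0 := \mathcal{G}$, at stage $i$ I invoke Lemma~\ref{aux lem} on $\mathcal{G}_i$, always giving priority to its first alternative. If it returns $(\mu, g^*)$ with $(g^*(n)(\mu))_{n\in A}$ one-to-one, set $\nu_i := \mu$, $\zeta_i := \vec{\mu}$, and feed $\zeta = \vec{\mu}$ together with the previously chosen constants $\nu_0,\dots,\nu_{i-1}$ into Lemma~\ref{brickline}; this contributes to $k_0$. Once only the second alternative of Lemma~\ref{aux lem} remains, pick any $g \in \mathcal{G}_i$ and feed the accompanying one-to-one $\zeta_g$ into Lemma~\ref{brickline}, passing the constants $\nu_0,\dots,\nu_{k_0-1}$ together with the previously chosen non-constant $\zeta_{i'}$'s (so that conclusion~(12) of Lemma~\ref{brickline} delivers the cross-level support avoidance). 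Each call outputs raw bricks $\mathcal{B}^{\mathrm{raw}}_{i,0},\dots,\mathcal{B}^{\mathrm{raw}}_{i, l_i-1}\subseteq G^\omega$ and a new working set $\mathcal{G}_{i+1}$ on a smaller $A_{i+1}\in p$. A finite intersection gives the final $A\in p$ with $A\subseteq B$.

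Iterating conclusion~(3) of Lemma~\ref{brickline} yields $\langle\mathcal{G}\cup\{\chi_{\vec{\nu_i}}:i<k_0\}\rangle = \langle\bigcup_{i,j}\mathcal{B}^{\mathrm{raw}}_{i,j}\rangle$, and by conclusion~(9) each $\chi_{\vec{\nu_i}}$ appears in some raw brick at level $i$. Writing $f = \sum_h q_{f,h}h$ and $h = \sum_f q'_{h,f}f$ for rationals $q_{f,h}, q'_{h,f}$, I pick $T\in\mathbb{Z}_{>0}$ clearing every denominator in sight and set $S := T^2$. I then form the final bricks $\mathcal{B}_{i,j}$ by replacing every $\chi_{\vec{\nu_{i'}}}$ occurring in $\mathcal{B}^{\mathrm{raw}}_{i,j}$ by $(K/T)\chi_{\vec{\nu_{i'}}}$ and every other raw element $h$ by $Kh$, and choose $K_n$ (for $n\in A$) divisible by $n!T$ and large enough that $K_nf(n)\in H$ for every $f\in\mathcal{A}$ and $K_nh(n)\in H$ for every raw brick element $h$. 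With these choices $K\mathcal{A}, K\mathcal{C}\subseteq H^\omega$, and reading off the change-of-basis coefficients scaled by $T$ (respectively $T^2$) yields integer $\mathcal{M}$, $\mathcal{N}$ satisfying (2) and (3).

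The stack axioms (i)--(xi) then transfer from the corresponding conclusions of Lemma~\ref{brickline} at each level, the positive scalars $K_n$ or $K_n/T$ not disturbing the ratio and monotonicity conditions in (v)--(vii), (ix), (x). The main obstacle I foresee is the cross-level bookkeeping required by axiom~(iv) at non-constant earlier levels together with axiom~(xi), since Lemma~\ref{brickline} handles constant earlier levels cleanly via its conclusion~(5) but does not directly enforce that an earlier non-constant $\zeta_{i'}(n)$ avoid the supports of brick elements at a later level. Both points are addressed by preliminary selectivity refinements of $A$: one to arrange, for each raw brick element $h$, that $|\supp h(n)|$ is constant on $A$ with the coordinates enumerated by functions that are either constant or one-to-one (so that the non-$\zeta$-coordinate values are constant in $n$), and another to eliminate cross-level coordinate collisions. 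These refinements are possible because only finitely many $h$'s and finitely many earlier $\zeta_{i'}$'s are involved at each stage, so at most countably many selectivity applications are needed and a diagonal argument within $p$ absorbs them all into a single set $A\in p$.
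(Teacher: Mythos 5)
Your two-phase plan (process the constant levels via the first alternative of Lemma~\ref{aux lem} and constant $\zeta$ in Lemma~\ref{brickline}, then the non-constant levels via the second alternative and one-to-one $\zeta$, terminating because $|\mathcal G'|$ drops, then clear denominators with $T$ and multiply by $K$) is exactly the structure of the paper's proof. However, two steps in your write-up are incorrect as stated.

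First, the scaling. You scale $\chi_{\vec{\nu_{i'}}}$ by $K/T$ but every other raw brick element $h$ by $K$, and then assert that ``reading off the change-of-basis coefficients scaled by $T$'' gives an integer matrix $\mathcal M$. That is false with the mixed scaling. Writing $f=\sum_{h} q_{f,h}\,h$ over the raw brick basis with $q_{f,h}\in\mathbb Q$, under your scaling the element of $\mathcal C=\frac{\bigcup\mathcal B_{i,j}}{K}$ corresponding to a non-$\chi$ raw $h$ is $h$ itself, so $\mathcal M_{f,h}=q_{f,h}$, which need not be an integer. Only the $\chi$-coefficients pick up the factor $T$. The paper scales \emph{every} raw brick element uniformly by $K/T$, i.e.\ $\mathcal B_{i,j}=K\cdot\frac{\hat{\mathcal B}_{i,j}}{T}$, so that every member of $\mathcal C$ is of the form $\hat h/T$ and every $q_{f,h}$ is multiplied by $T$; this is also what stack axiom~(viii), which demands $\frac{K}{T}\chi_{\vec{\nu_i}}\in\mathcal B_{i,j}$, is calibrated for. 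Your version is easily repaired by replacing ``$Kh$'' by ``$(K/T)h$,'' but as written (2) fails.

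Second, the cross-level support avoidance (stack axiom~(iv) at non-constant earlier levels). You claim that passing the earlier $\zeta_{i'}$'s into Lemma~\ref{brickline} and using its conclusion~(12) ``delivers the cross-level support avoidance.'' Conclusion~(12) only says $\zeta(n)\ne\zeta_{i'}(n')$ for distinct $n,n'$; it says nothing about $\zeta_{i'}(n)\notin\supp h(n)$ for $h$ in later bricks. The mechanism that actually gives axiom~(iv) is conclusion~(1) of Lemma~\ref{brickline}: the leftover $\mathcal G'$ from level $i'$ already avoids $\zeta_{i'}$ mod $p$; since every later brick element is a finite linear combination (via the $\sigma$'s of Lemma~\ref{Case A2}) of elements whose $\zeta_{i'}(n)$-coordinate vanishes for $p$-many $n$, this vanishing is inherited, and a final refinement of $A$ upgrades ``mod $p$'' to ``for all $n\in A$.'' You do gesture at a ``selectivity refinement to eliminate cross-level coordinate collisions,'' but an unstructured selectivity refinement cannot create vanishing of a coordinate; the point is precisely that the vanishing is already there by linearity and need only be made exact by shrinking $A$. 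This is a real gap in the argument as written, though again the needed observation is short.
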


	\begin{proof} (of Lemma \ref{Lem.stack.1}) 
		We will start building a sequence that will almost be the stack $\mathcal S$ which we will associate with  $\mathcal G$.

		\textbf{Claim:} There exist:
		
		\begin{itemize}
			\item $A'\in p$ with $A'\subseteq B$,
			\item $k_0\in \omega$
			\item $l':k_0\rightarrow \omega\setminus \{0\}$,
			\item $\nu:k_0 \rightarrow \kappa$,
			\item $\zeta':k_0\rightarrow \kappa^\omega$,
			\item $\mathcal G'\subset G^\omega$,
			\item $(\hat{\mathcal B}_{i, j}: i<k_0, j<l_i)$ a family of nonempty subsets of $G^\omega$,
			
		\end{itemize} 
		Satisfying:
		
		\begin{enumerate}
			\item $\zeta'_i(n)=\nu_i$ for every $i<k_0$ and $n \in A'$,
			\item  The elements $\nu_i\, (i<k_0)$ are pairwise distinct,
			\item $\nu_i \in \supp h(n)$, for each $ i<k_0$, $j<l_i$, $h \in \hat{\mathcal B}_{i,j}$ and $n\in A$,
			\item $\nu_{i} \notin \supp h(n)$, for each, $i<i_*<k_0$, $j< l_{i_*}$ and $h \in \hat{\mathcal B}_{i_*,j}$ and $n \in A$,
			\item $(h(n)(\nu_i))_{n\in A}$ converges strictly monotonically to an element of the extended real line, or it is constant and equal to a rational number, for each $i< k_0$, $j<l_i$ and $h\in\hat{\mathcal B}_{i,j}$,
			\item For every $i<k_0, j<l_i$, there exists $h_* \in \hat{\mathcal B}_{i, j}$ such that for every $h \in \hat{\mathcal B}_{i, j}$, $\left(\frac{h(n)(\nu_i)}{h_{*}(n)(\nu_i)}\right)_{n\in A}$ converges to a real number $\theta_{h_*}^h$ and $(\theta_{h_*}^h:\,h \in\hat{\mathcal B}_{i,j})$ is linearly independent (as a $\mathbb Q$-vector space),
			\item For each $i<k_0$, $j'<j<l_i$, $h \in\hat{\mathcal B}_{i,j}$ and $h'\in \hat{\mathcal B}_{i,j'}$, $\left(\frac{h(n)(\nu_i)}{h'(n)(\nu_i)}\right)_{n\in A}$ converges monotonically to $0$,
			\item $\left(|h(n)(\nu_i)|\right)_{n\in A'}$ is constant or strictly increasing, for each $i<k_0, j<l_i$ and $h \in \hat{\mathcal B}_{i,j}$,
			\item For each $ i<k_0$ there exists $j<l_i$ such that $\chi_{\vec{\nu_i}}\in\hat{\mathcal B}_{i,j}$,
			\item  For each $i<k_0, j<l_i$ and distinct $h, h_* \in\hat{\mathcal B}_{i,j}$, either 
			
			\begin{itemize}
				\item $|h(n)(\nu_i)|> |h_*(n)(\nu_i)|$  for each  $n \in A$, or
				\item   $|h(n)(\nu_i)|= |h_*(n)(\nu_i)|$  for each  $n \in A$, or
				\item  $|h(n)(\nu_i)|< |h_*(n)(\nu_i)|$  for each  $n \in A$.
			\end{itemize}
			
			\item for all $\mu \in \kappa$, for every $g \in \mathcal G'$, if $\{n \in \omega: \mu \in \supp g(n)(\mu)\}\in p$ then  $(g(n)(\mu))_{n\in A}$ is constant,
			\item for all $g\in\mathcal G'$ and all $i<k_0$, $\{n\in\omega:\nu_i\in\supp g(n)\}\notin p$,
			\item If $i, i'<k_0$, $j<l'_i$, $j'<l'_{i'}$ and $(i, j)\neq (i', j')$, then $\hat{\mathcal B}_{i,j}\cap\hat{\mathcal B}_{i', j'}=\void$, $\hat{\mathcal B}_{i, j}\cap\mathcal G'=\void$ for each $j<l$ and $\{[f]_p:f\in\mathcal G'\cup\bigcup_{i<k_0}\bigcup_{j<l_i}\hat{\mathcal B}_{i, j}\}$ is a linearly independent subset of $G^\omega/p$. Also, if $f, h\in\mathcal G'\dot\cup\dot\bigcup_{i<k_0}\dot\bigcup_{j<l_i}\hat{\mathcal B}_{i, j}$ are distinct, then $[f]_p\neq[h]_p$.
			\item As vector subspaces of $G^\omega$, $\langle\mathcal G'\cup\bigcup_{i<k_0}\bigcup_{j<l_i}\hat{\mathcal B}_{i, j}\rangle=\langle\mathcal G\cup\{\chi_{\vec{\nu_i}}:i<k_0\}\rangle$
			\item No element of $\mathcal G'$ is constant mod $p$ and $\{[f]_p:f\in \mathcal G'\}\cup\{[\chi_{\vec\xi}]_p: \xi \in \kappa \}$ is linearly independent.
			
		\end{enumerate}
		
		If $(11)$ already holds for $\mathcal G$ and $A=B$, we let $\mathcal G'=\mathcal G$, $A'=B$, $k_0=0$ and the other sequences be $\void$.
		
		If not, then we may take a $\nu_0\in\kappa$ such that there exist $g\in\mathcal G$ and $B'\subset B$, $B'\in p$ such that $\{n\in\omega:\nu_0\in\supp g(n)\}\in p$ and $(g(n)(\nu_0))_{n\in B'}$ is one-to-one.
		
		We define $\mathcal G_0=\mathcal G$ and apply Lemma \ref{brickline} to $B'$, $m=0$, $\mathcal G_0$, $\zeta(n)=\nu_0$ for all $n\in B'$ and obtain $\mathcal G'_0$, $A_0$, $l_0$ and $\hat{\mathcal B}_{0,j}$ for $j<l_0$ satisfying everything but (11) (possibly) by using $k_0=0$ .
		
		Suppose the recursion has been done up to $m\in\omega$ and we have, for $i<m$, $\nu_i$ $\mathcal G'_i$, $A_i$, $l_i$ and $\hat{\mathcal B}_{i,j}$ for $j<l_i$ satisfying everything but (10) (possibly) by using $k_0=m$. If (10) holds for $\mathcal G'_{m-1}$, then we let $A'=A_{m-1}$, $k_0=m$ and $\mathcal G'=\mathcal G'_{m-1}$ and the recursion is over. If not, we take $\nu_m\in\kappa$ such that  there exist $g\in\mathcal G'_{m-1}$ and $B'\subset A_{m-1}$, $B'\in p$ such that $\{n\in\omega:\nu_m\in\supp g(n)\}\in p$ and $(g(n)(\nu_m))_{n\in B'}$ is one-to-one. Notice that item (11) implies that $\nu_m\neq\nu_i$ for every $i<m$. We then apply Lemma \ref{brickline} to  $B'$, $m$, $\mathcal G'_{m-1}$, $\zeta(n)=\nu_m$ for all $n\in B'$, $\zeta_i(n)=\nu_i$ for all $n\in B'$ and $i<m$, and obtain $\mathcal G'_m$, $A_m$, $l_m$ and $\hat{\mathcal B}_{m,j}$ for $j<l_m$ satisfying everything but (11) (possibly) by using $k_0=m$.\\
		The recursion must eventually stop due to items (12), (13) and the fact that the $\hat{\mathcal B}_{i,j}$'s are nonempty. We now have $A'$, $k_0$, $l'$, $\nu$, $\zeta'$, $\mathcal G'$ and $(\hat{\mathcal B}_{i, j}: i<k_0, j<l_i)$ as in the Claim above.\\
		\textbf{Claim:}  There exist:
		
		\begin{itemize}
			\item $A''\in p$ with $A''\subseteq A'$,
			\item $k_1\in \omega\bs\{0\}$
			\item $l:k_1\rightarrow \omega\setminus \{0\}$ extending $l'$,
			\item $\zeta:k_1\rightarrow \kappa^\omega$ extending $\zeta'$,
			\item $(\hat{\mathcal B}_{i, j}: i<k_1, j<l_i)$ a family of nonempty subsets of $G^\omega$,
			
		\end{itemize} 
		Satisfying:
		
		\begin{enumerate}
			\item $\zeta_i(n)=\nu_i$ for every $i<k_1$ and $n \in A''$,
			\item  The elements $\nu_i\, (i<k_0)$ and $\zeta_j(n)$ $(k_0\le j<k_1, n\in A)$ are pairwise distinct,
			\item $\zeta_i(n) \in \supp h(n)$, for each $ i<k_1$, $j<l_i$, $h \in \hat{\mathcal B}_{i,j}$ and $n\in A''$,
			\item $\zeta_i(n) \notin \supp h(n)$, for each, $i<i_*<k_1$, $j< l_{i_*}$ and $h \in \hat{\mathcal B}_{i_*,j}$ and $n \in A''$,
			\item $(h(n)(\zeta_i(n)))_{n\in A''}$ converges strictly monotonically to an element of the extended real line, or it is constant and equal to a rational number, for each $i< k_1$, $j<l_i$ and $h\in\hat{\mathcal B}_{i,j}$,
			\item For every $i<k_1, j<l_i$, there exists $h_* \in \hat{\mathcal B}_{i, j}$ such that for every $h \in \hat{\mathcal B}_{i, j}$, $\left(\frac{h(n)(\zeta_i(n))}{h_{*}(n)(\zeta_i(n))}\right)_{n\in A''}$ converges to a real number $\theta_{h_*}^h$ and $(\theta_{h_*}^h:\,h \in\hat{\mathcal B}_{i,j})$ is linearly independent (as a $\mathbb Q$-vector space),
			\item For each $i<k_1$, $j'<j<l_i$, $h \in\hat{\mathcal B}_{i,j}$ and $h'\in \hat{\mathcal B}_{i,j'}$, $\left(\frac{h(n)(\zeta_i(n))}{h'(n)(\zeta_i(n))}\right)_{n\in A''}$ converges monotonically to $0$,
			\item $\left(|h(n)(\zeta_i(n))|\right)_{n\in A''}$ is strictly increasing, for each $i<k_1, j<l_i$ and $h \in \hat{\mathcal B}_{i,j}$,
			\item For each $ i<k_0$ there exists $j<l_i$ such that $\chi_{\vec{\nu_i}}\in\hat{\mathcal B}_{i,j}$,
			\item  For each $i<k_1, j<l_i$ and distinct $h, h_* \in\hat{\mathcal B}_{i,j}$, either 
			
			\begin{itemize}
				\item $|h(n)(\zeta_i(n))|> |h_*(n)(\zeta_i(n))|$  for each  $n \in A''$, or
				\item   $|h(n)(\zeta_i(n))|= |h_*(n)(\zeta_i(n))|$  for each  $n \in A''$, or
				\item  $|h(n)(\zeta_i(n))|< |h_*(n)(\zeta_i(n))|$  for each  $n \in A''$.
			\end{itemize}
			
			\item for all $\mu \in \kappa$, for every $i\ge k_0$, $j<l_i$ and $g \in \hat{\mathcal B}_{i,j}$, if $\{n \in \omega: \mu \in \supp g(n)(\mu)\}\in p$ then  $(g(n)(\mu))_{n\in A''}$ is constant,
			\item As vector subspaces of $G^\omega$, $\langle\bigcup_{i<k_0}\bigcup_{j<l_i}\hat{\mathcal B}_{i, j}\rangle=\langle\mathcal G\cup\{\chi_{\vec{\nu_i}}:i<k_0\}\rangle$,
			\item If $i, i'<k_1$, $j<l'_i$, $j'<l'_{i'}$ and $(i, j)\neq (i', j')$, then $\hat{\mathcal B}_{i,j}\cap\hat{\mathcal B}_{i', j'}=\void$,  and $\{[f]_p:f\in\bigcup_{i<k_0}\bigcup_{j<l_i}\hat{\mathcal B}_{i, j}\}$ is a linearly independent subset of $G^\omega/p$. Also, if $f, h\in\bigcup_{i<k_0}\bigcup_{j<l_i}\hat{\mathcal B}_{i, j}$ are distinct, then $[f]_p\neq[h]_p$.\\
			
				\end{enumerate}
			For the initial step $k_0$ of the recursion, we first notice that by item (11) of the previous Claim, applying Lemma \ref{aux lem} we may take $A''_{k_0}\in p$ such that for every $g\in\mathcal G'$ there exists $\zeta_g\in\kappa^\omega$ such that $\zeta_g(n)\in\supp g(n)$ for all $n\in A''_{k_0}$ and $\zeta_g|A''_{k_0}$ is one-to-one. Define $\mathcal G_{k_0}=\mathcal G'$, and apply Lemma \ref{brickline} to  $B=A''_{k_0}$, $m=k_0$,  $\mathcal G_{k_0}$, $\zeta=\zeta_g$ for an arbitrary $g\in\mathcal G'$, and obtain  $\mathcal G'_{k_0}$, $A''_{k_0+1}$, $l_{k_0+1}$ and $\hat{\mathcal B}_{k_0+1,j}$ for $j<l_{k_0+1}$. We then repeat this step until for some $k_1\ge k_0$, $\mathcal G'_{k_1}=\void$. Such a $k_1$ exists since, by Lemma \ref{brickline}, $|\mathcal G'_m|<|\mathcal G'_{m-1}|$ for each $m>k_0$.
			
		It follows from items (12) and (13) that $\bigcup_{i<k_0}\bigcup_{j<l_i}\hat{\mathcal B}_{i, j}$ and $\mathcal G\cup\{\chi_{\vec{\nu_i}}:i<k_0\}$, mod $p$, are bases for the same subspace of $G^\omega/p$. Let $\mathcal A=\mathcal G\cup\{\chi_{\vec{\nu_i}}:i<k_0\}$ and $\mathcal B'=\bigcup_{i<k_0}\bigcup_{j<l_i}\hat{\mathcal B}_{i, j}$.
		
		Fix families of integers $\hat{\mathcal M}=(\hat{\mathcal M}_{f, h}: f \in \mathcal A, g \in \mathcal B')$ and $\hat{\mathcal N}=(\hat{\mathcal N}_{h, f}: h \in \mathcal B', f \in \mathcal A)$ and a positive integer $T$ such that:
		
		$(3')$ $[f]_p = \frac{1}{T}\sum_{h \in {\mathcal B}'}\hat{\mathcal M}_{f,h}[h]_p$, for each $f \in {\mathcal A}$ and
		
		$(4')$ $[h]_p = \frac{1}{T}\sum_{f\in{\mathcal A}}\hat{\mathcal N}_{h,f}[f]_p$, for each $h\in{\mathcal B}'$.
		
		Let ${\mathcal C}= \{ \frac{h}{T}:\, h \in {\mathcal B}'\}$ and ${\mathcal M}$, ${\mathcal N}$ be such that ${\mathcal M}_{f,\frac{h}{T}}=\hat{\mathcal M}_{f,h}$ and ${\mathcal N}_{\frac{h}{T},f}=\hat{\mathcal N}_{h,f}$. Then:

		$(3'')$ $[f]_p = \sum_{h \in {\mathcal B}}{\mathcal M}_{f,h}[h]_p$, for each $f \in {\mathcal A}$ and
		
		$(4'')$ $[h]_p = \frac{1}{T^2}\sum_{f\in{\mathcal A}}{\mathcal N}_{h,f}[f]_p$, for each $h\in{\mathcal C}$.
		
		Let $A\subset A''$, $A\in p$ be such that for every $n\in A$,  $f \in {\mathcal A}$ and $h\in{\mathcal C}$, $f(n) =\sum_{h \in {\mathcal C}}{\mathcal M}_{f,h}h(n)$ and $h(n) = \frac{1}{T^2}.\sum_{f\in{\mathcal A}}{\mathcal N}_{h,f}f(n)$.
		
		Now let $K$ be a strictly increasing sequence of positive integers such that $K_0>1$, $n!T\mid K_n$ for all $n\in\omega$, and $K.\mathcal C\subset H^\omega$. We now have that by defining $\mathcal B_{i,j}=K.\frac{\hat{\mathcal B}_{i,j}}{T}$, we have the desired rational stack.
	\end{proof}

	\section{Solving arc functions on a level of a rational stack}
	\label{section.stack.arcequation}
	
	The main tool to solve the arc equation of rational stacks is the same used for integer stacks and we state the lemmas used in \cite{tomita2015}. However, there is a crucial difference in the way the stack was defined so that we separate when the denominator grows compared to the numerator, when the numerator and denominator are pretty much at even speed and when the numerator grows compared to the denominator.
	
	\subsection{An application of Kronecker's Lemma} \label{subsection.kronecker}
	
	Kronecker's Lemma says that if $\{1,  \theta_0, \ldots , \theta_{k-1}\}$ is a linearly independent subset of the vector space ${\mathbb R}$ over the
	field ${\mathbb Q}$ then $\{ (\theta_0 . n +{\mathbb Z}, \ldots , \theta_{k-1}.n +{\mathbb Z} ):\, n \in {\mathbb Z}\}$ is a dense subset of ${\mathbb T}^k$ (see \cite{brocker&tomdieck}).

	\begin{lem} ( Lemma 4.3 \cite{tomita2015}) \label{dense.torus} If  $( \theta_0, \ldots , \theta_{r-1})$ is linearly independent subset of the $\mathbb Q$-vector space ${\mathbb R}$ and $\epsilon >0$ then there exists a positive integer $L$ such that $\{ (\theta_0 . x +{\mathbb Z}, \ldots ,\theta_{r-1}.x +{\mathbb Z} ):\, x \in I\}$ is $\epsilon$-dense in the usual Euclidean metric topology, for any interval $I$ of length at least $L$.
	\end{lem}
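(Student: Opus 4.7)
The plan is to bootstrap Kronecker's Lemma (the discrete statement quoted immediately above) to a quantitative $\epsilon$-density statement on any real interval of sufficient length, via a compactness argument followed by a translation on the torus. I read the hypothesis as the usual one under which Kronecker's Lemma applies, namely that $\{1,\theta_0,\ldots,\theta_{r-1}\}$ is $\mathbb Q$-linearly independent in $\mathbb R$; then $\{n\cdot\theta+\mathbb Z^r:n\in\mathbb Z\}$ is dense in $\mathbb T^r$, where I abbreviate $t\cdot\theta=(t\theta_0,\ldots,t\theta_{r-1})$.

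First I would extract a finite $\epsilon$-net by compactness. Since the orbit is dense in the compact space $\mathbb T^r$, the open $\epsilon$-balls around the orbit points cover $\mathbb T^r$, and a finite subcollection already covers. This produces a finite set $F\subseteq\mathbb Z$ such that $\{n\cdot\theta+\mathbb Z^r:n\in F\}$ is itself $\epsilon$-dense in $\mathbb T^r$.

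Next I would set $L=\max F-\min F+1$ and check it works. For any real interval $I$ of length at least $L$, there exists an integer $m$ with $m+F\subseteq I$, because $I$ contains a run of integers of diameter at least $L-1=\max F-\min F$. Since $t\mapsto t\cdot\theta+\mathbb Z^r$ is a group homomorphism $\mathbb R\to\mathbb T^r$, the image of $m+F$ is the translate $m\cdot\theta+\{n\cdot\theta+\mathbb Z^r:n\in F\}$ of the $\epsilon$-net. Translation on $\mathbb T^r$ is an isometry, so the translate is still $\epsilon$-dense, and since $m+F\subseteq I\cap\mathbb Z\subseteq I$, the image $\{x\cdot\theta+\mathbb Z^r:x\in I\}$ contains an $\epsilon$-dense subset and is therefore $\epsilon$-dense.

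The only genuinely non-routine ingredient is Kronecker's Lemma itself, which is already cited. The remainder is bookkeeping: a compactness step to produce the finite $\epsilon$-net $F$, and a choice of $L$ just larger than the diameter of $F$ so that some integer translate of $F$ fits inside every interval of length $\geq L$. I do not anticipate a real obstacle; a mild subtlety would only arise if one insisted on the weaker literal reading of the hypothesis (omitting $1$ from the independent set), in which case one would instead observe that the image of $\mathbb R$ is a closed connected subgroup of $\mathbb T^r$ whose character annihilator is $\{m\in\mathbb Z^r:\sum_i m_i\theta_i=0\}=\{0\}$, hence equals $\mathbb T^r$, and then run the same compactness-plus-translation argument with $F\subseteq\mathbb R$ instead of $F\subseteq\mathbb Z$.
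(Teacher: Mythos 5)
The paper does not prove this lemma; it is quoted verbatim from \cite{tomita2015} (Lemma 4.3 there) and used as a black box, so there is no in-paper argument to compare against. Your proof is correct and is the standard compactness-plus-translation derivation from Kronecker's theorem: density of the $\mathbb{Z}$-orbit gives, by compactness of $\mathbb{T}^r$, a finite $\epsilon$-net $\{n\cdot\theta+\mathbb{Z}^r : n\in F\}$; any interval of length exceeding $\operatorname{diam} F$ contains an integer translate $m+F$; and since $x\mapsto x\cdot\theta+\mathbb{Z}^r$ is a homomorphism and translation on $\mathbb{T}^r$ is an isometry, the image of $m+F$ is still an $\epsilon$-net. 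Your side remark about the hypothesis is apt: the lemma as written omits $1$ from the independent set, whereas the Kronecker statement quoted just above it includes $1$; your fallback via the annihilator of the one-parameter subgroup correctly covers the literal (weaker) hypothesis, with $F\subseteq\mathbb R$ in place of $F\subseteq\mathbb Z$.

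One pedantic point: with $L=\max F-\min F+1$, an \emph{open} interval of length exactly $L$ need not contain an integer translate of $F$ (e.g.\ $F=\{0\}$, $I=(0,1)$). Replacing $L$ by $\max F-\min F+2$, or simply reading ``interval'' as closed, removes the issue; this does not affect the substance of the argument.
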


	Given $\epsilon>0$ and $\theta=(\theta_i:i\in I)$ a finite linearly independent subset of $\mathbb R$ as a $\mathbb Q$-vector space, fix an integer $L(\theta,\epsilon)$ satisfying the conditions in Lemma \ref{dense.torus}.
	
	\begin{lem} (Lemma 4.4 of \cite{tomita2015}) \label{lem.kronecker} Fix a positive real $\epsilon^*<\frac{1}{8}$. Let $\theta=( \theta_0, \ldots , \theta_{r-1})$ be a linearly independent subset of ${\mathbb R}$ as a ${\mathbb Q}$-vector space.
		
		Set $L=L(\theta,\epsilon^*)$  and $(a_0, \ldots , a_{r-1})$ be a sequence of integers such that
		
		$i)$ $|a_0|>\ldots >|a_{r-1}|$ and
		
		$ii)$ $|\theta_k -\frac{a_k}{a_0}| < \frac{\epsilon^*}{\sqrt{r}.L}$ for each $k<r$.
		
		Then
		
		$a)$ $\{(a_0.x , \ldots , a_{r-1}.x):\, x \in J\}$ is $2.\epsilon^*$-dense for any arc $J$
		of length at least $\frac{L}{|a_0|}$ and
		
		$b)$ for any arc $J$ of length at least $3.\frac{L}{|a_0|}$ and ${\mathcal U}$ any open ball of radius $4.\epsilon^*$ (in $\mathbb T^r$ with the Euclidean metric), there exists an arc $K$ contained in $J$ of length $\frac{4.\epsilon^*}{\sqrt{r}.|a_0|}$ such that $\{(a_0.x , \ldots , a_{r-1}.x):\, x \in K\}\subseteq {\mathcal U}$.
	\end{lem}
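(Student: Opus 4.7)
The plan is to deduce both parts from Lemma \ref{dense.torus} applied to the ``ideal'' orbit $\{y(\theta_0, \ldots, \theta_{r-1}) : y \in I\}$, after the substitution $y = a_0 x$, and then transfer the density to the actual orbit $\{x(a_0, a_1, \ldots, a_{r-1}) : x \in J\}$. Under that substitution the actual orbit rewrites as $\{y(\alpha_0, \alpha_1, \ldots, \alpha_{r-1}) : y \in a_0 J\}$, where $\alpha_0 = 1$ and $\alpha_k = a_k/a_0$ for $k \geq 1$. Hypothesis (ii) then becomes precisely the bound $\|\alpha - \theta\|_2 \leq \sqrt{r} \cdot \epsilon^*/(\sqrt{r}\, L) = \epsilon^*/L$ in Euclidean norm on $\mathbb{R}^r$.

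For (a), I would first fix a subinterval $J' \subseteq J$ of length exactly $L/|a_0|$, so that $I := a_0 J'$ has length exactly $L$; Lemma \ref{dense.torus} then gives $\epsilon^*$-density of $\{y\theta : y \in I\}$ in $\mathbb{T}^r$. The key computation is that the error $D(y) := y(\alpha - \theta) \in \mathbb{R}^r$ is linear in $y$ with slope of Euclidean norm at most $\epsilon^*/L$, so $\|D(y) - D(y_0)\|_2 \leq \epsilon^*$ for all $y, y_0 \in I$. Fixing any $y_0 \in I$, the translate $\{y\theta + D(y_0) : y \in I\}$ remains $\epsilon^*$-dense, and the actual orbit $\{y\alpha : y \in I\}$ lies within pointwise Euclidean distance $\epsilon^*$ of it; the triangle inequality yields $2\epsilon^*$-density, which passes to the larger set $\{x(a_0, \ldots, a_{r-1}) : x \in J\}$.

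For (b), I would exploit that $x \mapsto x(a_0, \ldots, a_{r-1})$ is Lipschitz with constant $\sqrt{\sum_k a_k^2} \leq \sqrt{r}\,|a_0|$ (by hypothesis (i)), so the image of any arc of length $\ell := 4\epsilon^*/(\sqrt{r}\,|a_0|)$ has diameter at most $4\epsilon^*$ and is contained in a ball of radius $2\epsilon^*$ about the image of its midpoint. Shrinking $J$ by $\ell/2$ on each side still leaves a subarc of length at least $3L/|a_0| - \ell \geq L/|a_0|$ (using $\epsilon^* < 1/8$ and $L \geq 1$), to which part (a) applies. This produces an $x_0$ whose image lies within $2\epsilon^*$ of the center of $\mathcal{U}$; taking $K := [x_0 - \ell/2, x_0 + \ell/2]$ gives $K \subseteq J$ and image contained in the ball of radius $2\epsilon^* + 2\epsilon^* = 4\epsilon^*$ around the center of $\mathcal{U}$, i.e., in $\mathcal{U}$.

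The principal subtlety lies in (a): although $|y|$ may be arbitrarily large on $I$, what must be controlled is only the \emph{variation} of the error $D(y)$ across $I$, not its absolute size. Because $D$ is linear, this variation equals $|I| \cdot \|\alpha - \theta\|_2 = L \cdot \epsilon^*/L = \epsilon^*$, and hypothesis (ii), calibrated to the Kronecker constant $L(\theta, \epsilon^*)$, is engineered precisely so that this cancellation goes through.
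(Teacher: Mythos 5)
The paper does not prove this lemma: it is quoted from \cite{tomita2015} and used as a black box, so there is no in-paper proof to compare against. Your self-contained argument is correct. In part (a) the crucial point is exactly the one you flag at the end: the linear error $D(y)=y(\alpha-\theta)$ may have large absolute value, but only its variation over the length-$L$ interval $I=a_0J'$ matters, and hypothesis (ii) calibrates $\|\alpha-\theta\|_2<\epsilon^*/L$ precisely so that this variation stays below $\epsilon^*$; translating the $\epsilon^*$-dense Kronecker orbit $\{y\theta:y\in I\}$ by the fixed offset $D(y_0)$ absorbs the constant part, and the triangle inequality delivers $2\epsilon^*$-density of $\{y\alpha:y\in I\}$, hence of the larger orbit over $J$. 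In part (b) the Lipschitz estimate $\sqrt{\sum_k a_k^2}\le\sqrt r\,|a_0|$ (from (i)), the shrinking of $J$ by $\ell/2$ on each side (which you correctly verify is harmless because $L\ge 1$ and $\epsilon^*<1/8$), and the appeal to (a) for the midpoint $x_0$ fit together as claimed. The one small caveat is that $K$ should be read as the \emph{open} arc centred at $x_0$ of the stated length; since $\mathcal U$ is an open ball, a closed endpoint could in the worst case land on its boundary, but the openness of $K$ together with the strict inequality already present in hypothesis (ii) keeps every inequality in the chain strict, so the conclusion holds.
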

	
	\subsection{Solving arc equations on a level of a rational stack}
	
	We will present the proof of the remaining technical lemma.
	
	\medskip
	
	\begin{lemn} [\ref{Lem.stack.2}] 
		
 Let $\mathcal S$, ${\mathcal A}$, ${\mathcal C}$, ${\mathcal M}$ and ${\mathcal N}$ be as in Lemma  \ref{Lem.stack.1}. Let $\epsilon$ be a positive real and $D$ be a finite subset of $\kappa$. Then there exist $B \subseteq A$ cofinite in $A$ and a family of positive real numbers $(\gamma_n: n \in B)$ such that:
		
		For every $n \in B$, for every family  $( W_h:\, h \in {\mathcal C} )$ of open arcs of length $\epsilon$, and for every arc function $\psi$ of length $\epsilon$ such that $\supp \psi \subseteq D\setminus \{\nu_0,\ldots \nu_{k_0-1}\}$, there exists an $n$-solution of length $\gamma_n$ for the arc equation $(\psi,B,K.\mathcal C,K_n,W)$.

	\end{lemn}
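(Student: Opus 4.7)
The plan is to construct the $n$-solution $\phi$ by a double backwards induction: iterate over the levels $i$ from $k_1-1$ down to $0$, and within each level iterate over the brick index $j$ from $l_i-1$ down to $0$. At each brick we apply Lemma~\ref{lem.kronecker} to choose an arc at $\zeta_i(n)$ satisfying every arc equation indexed by an element of that brick. The order (higher levels first, larger $j$ first inside a level) is forced by property~(iv), which guarantees that when we reach brick $(i,j)$ no $\zeta_{i'}(n)$ with $i'<i$ appears in the supports being solved, so the arc we fix now does not interfere with anything chosen earlier.

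First I would fix a small $\epsilon^*<\epsilon/16$ (tightened further below) and pass to a cofinite $B\subseteq A$ ensuring three numerical estimates at each brick $(i,j)$ and each $n\in B$. Let $h_*^{i,j}$ be the element of $\mathcal B_{i,j}$ with largest $|h(n)(\zeta_i(n))|$ (well-defined and constant across $n$ by property~(x)), set $r=|\mathcal B_{i,j}|$, and let $L=L((\theta^h_{h_*^{i,j}})_{h\in\mathcal B_{i,j}},\epsilon^*)$ be the Kronecker constant from Lemma~\ref{dense.torus}. The three estimates are: (a) the ratios $h(n)(\zeta_i(n))/h_*^{i,j}(n)(\zeta_i(n))$ approximate $\theta^h_{h_*^{i,j}}$ within $\epsilon^*/(\sqrt r\,L)$, as in property~(vi); (b) $K_n|h_*^{i,j}(n)(\zeta_i(n))|$ is large enough that the arc length $4\epsilon^*/(\sqrt r\,K_n|h_*^{i,j}(n)(\zeta_i(n))|)$ produced by Lemma~\ref{lem.kronecker}(b) is at most $\epsilon/K_n$; (c) the between-brick ratio $|h(n)(\zeta_i(n))/h'(n)(\zeta_i(n))|$ for $h\in\mathcal B_{i,j}$, $h'\in\mathcal B_{i,j-1}$ (property~(vii)) is small enough that the arc selected at step $(i,j)$ survives the later tightening at $(i,j-1)$ on the same $\zeta_i(n)$. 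All three are asymptotic, so they hold cofinitely in $A$.

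For a fixed $n\in B$ initialize $\phi(\mu)=\mathbb T$ and then modify it in two preliminary rounds. For $\mu\in\supp\psi$ that is not any $\zeta_i(n)$, reset $\phi(\mu)$ to an arc of length $\epsilon/K_n$ concentric with $\psi(\mu)/K_n$; for $\mu\in\bigcup_h\supp h(n)$ that is neither any $\zeta_i(n)$ nor in $\supp\psi$, set $\phi(\mu)$ to an arbitrary single point. Now process the bricks in the stated order. At step $(i,j)$ split the equation for $h\in\mathcal B_{i,j}$ as $K_n h(n)(\zeta_i(n))\phi(\zeta_i(n))+R_h\subseteq W_h$, where $R_h$ collects the terms with $\mu\ne\zeta_i(n)$. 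By property~(iv), every such $\mu$ is either some $\zeta_{i'}(n)$ with $i'>i$ (already fixed at a higher-level step) or a non-$\zeta$ point (fixed in the initialization), so $R_h$ is determined. The target arc $V_h=W_h-R_h$ has length $\epsilon$. Apply Lemma~\ref{lem.kronecker}(b) with $a_h=K_n h(n)(\zeta_i(n))$, $\theta_h=\theta^h_{h_*^{i,j}}$, with $J$ the current arc at $\zeta_i(n)$ (inherited from step $(i,j+1)$, or $\mathbb T$ at $j=l_i-1$), and $\mathcal U$ the Euclidean ball of radius $4\epsilon^*$ centered at the tuple of centers of the $V_h$'s, to obtain a shorter arc for $\phi(\zeta_i(n))$, contained in $J$, that satisfies every equation of brick $(i,j)$.

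Finally, let $\gamma_n$ be the minimum of the arc lengths produced over the finitely many $(i,j)$-steps and shrink every non-trivial arc of $\phi$ to length $\gamma_n$ with centers unchanged; since $\gamma_n\le\epsilon/K_n$ the resulting $\phi$ satisfies $K_n\phi\le\psi$, and $\gamma_n$ depends only on $n$ (through $K_n$ and the integers $h(n)(\zeta_i(n))$), not on $\psi$ or $W$. The main difficulty is the bookkeeping that legitimizes the Kronecker application at every iteration: one must check that the inherited arc $J$ at $\zeta_i(n)$ is still of length at least $3L/|a_{h_*^{i,j}}|$ when step $(i,j)$ is reached, and that the ball $\mathcal U$ of radius $4\epsilon^*$ really sits inside every $V_h$ simultaneously. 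Both points rest on the monotonicity built into properties~(v), (vi) and (vii), together with property~(viii) for the low levels $i<k_0$, where the presence of $\chi_{\vec{\nu_i}}$ in some brick forces the Kronecker step of that brick to pin down $\phi(\nu_i)$ before the lower-indexed bricks at that level inherit it through their supports.
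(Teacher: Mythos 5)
You have identified the right architecture --- a double downward recursion over bricks $(i,j)$, with Lemma~\ref{lem.kronecker} used at each brick to pin down $\phi(\zeta_i(n))$, and $\gamma_n$ taken to be the minimum of the resulting lengths --- and this matches the paper's proof. But there is a genuine gap in how you feed data into each Kronecker step. After you initialize $\phi(\mu)$ at the non-$\zeta$ points of $\supp\psi$ to \emph{arcs} of length $\epsilon/K_n$, the remainder $R_h=\sum_{\mu\ne\zeta_i(n)}h(n)(\mu)\phi(\mu)$ is a sum of arcs of total length on the order of $\sum_{\mu}|h(n)(\mu)|\cdot\epsilon/K_n$; since each $h\in K\mathcal C$ has $h(n)$ a $K_n$-multiple of an element of $\mathcal C$ at every coordinate, this quantity can easily equal or exceed $\epsilon$, in which case the set of translates $\{x:x+R_h\subseteq W_h\}$ is empty and the Kronecker step has no target. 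Your assertion that ``the target arc $V_h=W_h-R_h$ has length $\epsilon$'' holds only when $R_h$ is a single point, which under your initialization it is not. Relatedly, you never actually establish the inclusion $\sum_{\mu}h(n)(\mu)\phi(\mu)\subseteq W_h$ before the final shrinking; shrinking to length $\gamma_n$ preserves an inclusion, but you have not proved one to preserve.

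The paper avoids this by committing throughout the recursion to \emph{points} $x_\mu$, chosen so that $K_nx_\mu$ is the center of a refined $\epsilon^*$-arc function $\psi^*\le\psi$. Each $O_h=V_h-\sum_{\mu\ne\zeta_i(n)}h(n)(\mu)x_\mu$ is then a genuine translate of a fixed sub-arc $V_h\subseteq W_h$ of length $4\epsilon^*$, so Lemma~\ref{lem.kronecker} always has a nondegenerate ball to aim for. Only at the very end does the paper replace each $x_\mu$ by the arc of center $x_\mu$ and length $\gamma_n=\epsilon^*/\bigl((|\mathcal A|+1)\max_h\|h(n)\|\bigr)$, and the short estimate $\sum_{\mu}|h(n)(\mu)|\gamma_n<\epsilon^*$ shows the resulting arc $\sum_{\mu}h(n)(\mu)\phi(\mu)$ stays within $3\epsilon^*<\epsilon/2$ of the center of $V_h$, hence inside $W_h$. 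To repair your write-up, run the Kronecker recursion against centers so that each $R_h$ really is a point, and then add the explicit final perturbation estimate that licenses fattening those centers into arcs of length $\gamma_n$; the bookkeeping issues you flag (that the inherited arc at $\zeta_i(n)$ stays long enough, via property~(vii), and that a $4\epsilon^*$-ball fits in each target) are exactly the conditions $(a)$--$(c)$ the paper imposes on the cofinite set $B$, and become routine once the targets are translates of fixed-size arcs rather than Minkowski differences.
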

	
	\begin{proof} For each $(i, j)$ with $i<k_1$ and $j<l_i$, let $u_{i, j} \in \mathcal B_{i, j}$ and $v_{i, j}$ be such that for every $h \in \mathcal B_{i, j}$ and $n \in A$, $|u_{i, j}(n)(\zeta_i(n))|\leq |h(n)(\zeta_i(n))|\leq |v_{i, j}(n)(\zeta_i(n))|$. Fix $ \epsilon^*<\min \{\frac{1}{8} , \frac{1}{8}\epsilon\}$. For each $i<k_1$, $j<l_i$ and $h \in \mathcal B_{i, j}$, let $\theta_h$ be the limit of $\left(\frac{h(n)(\zeta_i(n))}{v_{i, j}(n)(\zeta_i(n))}\right)_{n \in A}$. Let $\theta_{i,j}=(\theta_{h}:h\in\mathcal B_{i,j})$ and let $L$ be a fixed integer greater than $L(\theta_{i,j},\epsilon^*)$ for any $i<k_1$ and $j<l_i$.

		Let $B\subset A$ be the set of $n$'s in $A$ is such that

		$a)$ $\left|\theta_{h} -\frac{h(n)(\zeta_i(n))}{v_{i, j}(n)(\zeta_i(n))}\right| < \frac{\epsilon^*}{\sqrt{|{\mathcal A}|+1}.L}$, for each $i< k_1$, $j<l_i$  and $h \in \mathcal B_{i, j}$;
		
		$b)$ $3.\frac{L}{|v_{i,l_{i}-1}(n)(\zeta_i(n))|}< \epsilon^*$, for
		each $i<k_1$.
		
		$c)$ $\frac{3L}{|u_{i,j-1}(n)(\zeta_i(n))|} \leq\frac{4.\epsilon^*}{\sqrt{|{\mathcal A}|+1}.|v_{i,j}(n)(\zeta_i(n))|}$, for each $i<k_1$ and $j<l_i$, and
		
		$d)$ $\{ \zeta_i(n):\, k_0 \leq i <k_1\}
		\cap D =\void$.
		\medskip
		
		Notice that $B$ is cofinite in $A$, therefore is in $p$. Let $\gamma_n=\frac{\epsilon^*}{(|\mathcal A|+1)\max\{\|h(n)\|: h \in \mathcal B\}}$ for each $n \in B$, where $\|h(n)\|=\sum_{ \mu \in \supp h(n)}|h(n)(\mu)|$. Now let $(W_h: h \in \mathcal C)$ and $\psi$ be given. Fix $n\in B$.

		For each $h \in \mathcal C$, fix $V_{h} \subseteq W_{h}$ with $V_{h}$ an arc of length $4\epsilon^*$.
		
		Given an arbitrary $\epsilon$-arc function $\psi$ as required, fix $\psi^*$ an $\epsilon^*$-arc function such that $\supp \psi^* \supseteq D$, $\supp \psi^* \cap \{\zeta_i(n):\, 0\leq i<k_1\}=\void$, $\psi^* \leq \psi$ and $\supp h(n) \setminus \{ \zeta_i(n):\, k_0\leq i< k_1\} \subseteq \supp \psi^*$, for each $i<k_1$, $j<l_i$ and $h\in \mathcal B_{i, j}$.

		For each $\mu \in \supp \psi^*$ choose $x_\mu$ such that $K_nx_\mu $ is the center of $\psi^*(\mu)$.

		For each $0\leq i<k_1$, $j<l_i$, notice that $ \{ \zeta_{0}(n), \ldots, \zeta_{k_1-1}(n)\} \cap  \supp h \subset\{ \zeta_{i}(n), \ldots , \zeta_{k_1-1}(n)\}$ for each $h \in \mathbb  B_{i,j}$.
		
		We will define by downward recursion, for $0<i\leq k_1-1$,  an arc $Q_{i,0}\subset\mathbb T$.
		
		For the first step $i_\star=k_1-1$, we will define $Q_{i_\star,j}$ for $j<l_{i_\star}$, also by downward recursion. So let $j_\star=l_{i_\star}-1$ be the first step.
		
		Let $O_{h}=V_{h} - \sum_{\mu \supp h(n) \setminus \{ \zeta_{i_*}(n)\}}h(n)(\mu)x_\mu$, for each $h \in \mathcal B_{i_*,j*}$.

		Fix an arbitrary arc $J$ of length at least $3.\frac{L}{|v_{i_\star,j_\star}(n)(\zeta_{i_\star}(n))|}$ and ${\mathcal U}_{i_\star,j_\star}$ the ball of radius $4.\epsilon^*$ contained in the product of $\prod_{h\in\mathcal B_{i_\star,j_\star}}O_h$. By Lemma \ref{lem.kronecker}, there exists
		an interval $Q_{i_\star,j_\star}$ contained in $J$ of length $\frac{4.\epsilon^*}{\sqrt{|{\mathcal A}|+1}.|v_{i_\star,j_\star}(n)(\zeta_{i_\star}(n))|}$ such that $\{(h(n)(\zeta_{i_\star}(n)).x:h\in\mathcal B_{i_\star,j_\star}):\, x \in Q_{i_\star,j_\star}\}\subseteq {\mathcal U}_{i_\star,j_\star}\subseteq \mathbb T^{\mathcal B_{i_\star, j_\star}}$.
		
		Suppose $j'<l_{i_\star}$ and we have defined $Q_{i_\star,j}$ for all $j'\le j<l_{i_\star}$. If $j'=0$ then we are done for step $i_\star=k_1-1$.
		
		If not: by $c)$, it follows that $\frac{3L}{|u_{i_\star,j'-1}(n)(\zeta_{i_\star}(n))|} \leq\frac{4.\epsilon^*}{\sqrt{|{\mathcal A}|+1}.|v_{i_\star,j'}(n)(\zeta_{i_\star}(n))|}$ and $Q_{i_\star,j'}$ has size exactly the right side of the inequality above. Let ${\mathcal U}_{i_\star,j'-1}$ be a ball of length $4\epsilon^*$ contained in $\prod_{h\in\mathcal B_{i_\star,j'-1}} O_h$. Applying Lemma \ref{lem.kronecker}, there exists an arc $Q_{i_\star,j'-1}$  of length $\frac{4.\epsilon^*}{\sqrt{|{\mathcal A}|+1}.|v_{i_\star,j'-1}(n)(\zeta_{i_\star}(n))|}$ contained in $Q_{i_\star,j'}$  such that $\{(h(n)(\zeta_{i_\star}(n)).x:h\in\mathcal B_{i_\star,j'-1}):\, x \in Q_{i_\star,j'-1}\}\subseteq {\mathcal U}_{i_\star,j'-1}$.
		
		We thus obtain, for $i_\star=k_1-1$, $Q_{i_\star,0}$ an arc of length $\frac{4.\epsilon^*}{\sqrt{|{\mathcal A}|+1}.|v_{i_\star,0,}(n)(\zeta_{i_\star}(n))|}$ such that 
		
		$\{(h(n)(\zeta_{i_\star}).x:h\in\mathcal B_{i_\star,j}):\, x \in Q_{i_\star,j}\}\subseteq \prod_{h\in\mathcal B_{i_\star,j}} O_h$, for each $j<l_{i_\star}$. At the end, we will  have defined $Q_{i_\star,0}$.
		
		Let $x_{\zeta_{i_\star}(n)}$ be the center of  $Q_{i_\star,0}$.  By  definition of $O_h$ we have
		
		$O_h=V_h - \sum_{\mu\in \supp h(n) \setminus \{ \zeta_{i_\star}(n)\}}h(n)(\mu)x_\mu$, for each $j<l_{i_\star}$ and $h\in\mathcal B_{i_\star,j}$.
		
		Then  $\sum_{\mu \in\supp h(n)}h(n)(\mu)x_\mu  \in 
		\sum_{\mu\in \supp h(n) \setminus \{ \zeta_{i_\star}(n)\}}h(n)  (\mu)x_\mu +O_h =V_h$,  for each $j<l_{i_\star}$ and $h\in\mathcal B_{i_\star,j}$.\\
		
		The first step of the recursion has been carried out. Suppose $0\leq i'<k_1$ and $Q_{i,0}$ has been defined for all $i'\le i<k_1$.

		If $i'=0$ then we are done. Otherwise if $i'-1\geq 0$: 
		
		Let $O_h=V_h - \sum_{\mu\in \supp h(n) \setminus \{ \zeta_{i'-1}(n)\}}h(n)(\mu)x_\mu$, for each $j<l_{i'-1}$ and $h\in\mathcal B_{i'-1,j}$.
		
		It is important here that $ \{ \zeta_{0}(n), \ldots, \zeta_{i'-1}(n)\} \cap \ \supp h =\{ \zeta_{i'-1}(n)\}$, for each $j<l_{i'-1}$ and $h\in\mathcal B_{i'-1,j}$.
		
		We are in similar conditions as for $i'$ to obtain $Q_{i'-1,0}$ and $x_{\zeta_{i'-1(n)}}$ the center of   $Q_{i'-1,0}$ an arc of length 
		$\frac{4.\epsilon^*}{\sqrt{|{\mathcal A}|+1}.|v_{i'-1,0}(n)(\zeta_{i'-1}(n))|}$ such that 
		
		$\sum_{\mu\in \supp h(n) }h(n)(\mu)x_\mu  \in 
		\sum_{\mu\in \supp h(n) \setminus \{ \zeta_{i'-1}(n)\}}h(n)  (\mu)x_\mu +O_h =V_h$,  for each $j<l_{i'-1}$ and $h\in\mathcal B_{i'-1,j}$.
		
		This ends the construction of $x_\mu$ for each $\mu \in \supp \psi^* \cup \{ \zeta_0(n) , \ldots, \zeta_{k_1-1}(n)\}$. Choose an arbitrary $x_\mu$ for $ \mu \in D\setminus (\supp \psi^* \cup \{\zeta_0(n), \ldots, \zeta_{k_0-1}(n)\})$.
		
		Let $\phi(\mu)$ be the arc of center $x_\mu$ and length $\gamma_n$. We show that $\phi$ is the solution we are looking for. 
		
		By the choice of $x_\mu$ and since $\psi^*\le\psi$, it follows  $K_n.\phi \leq \psi$.
		
		Secondly, if $h \in \mathcal B_{i,j}$ then	$\sum_{\mu\in \supp h(n) }h(n)(\mu)x_\mu  \in 
		V_{h}$.  It follows that the center of 	$\sum_{\mu\in \supp h(n) }h(n)(\mu)\phi(\mu)$ is contained in $V_h$ and this arc has length at most $\sum_{\mu\in \supp h(n) }|h(n)(\mu)|.\gamma_n=\sum_{\mu\in \supp h(n) }|h(n)(\mu)|.\frac{\epsilon^*}{(|\mathcal A|+1)\max\{\|h(n)\|: h \in \mathcal B\}} <\epsilon^*$. Therefore, a point of the arc $\sum_{\mu\in \supp h(n) }h(n)(\mu)\phi(\mu)$ is at a distance smaller than $2\epsilon^*+\epsilon^*$ from the center of $V_h$. Since $W_h$ and $V_h$ have the same center and $3\epsilon^*< \frac{\epsilon}{2}$ it follows that $\sum_{\mu\in \supp h(n) }h(n)(\mu)\phi(\mu)\subseteq W_h$. Thus, 
		$\phi$ is as required.

	\end{proof}
	
	\section*{Dedication}
	
	The third listed author would like to dedicate this work to his long term collaborator and friend, Dr. Salvador Garcia Ferreira on the occasion of his 60th birthday.

	\bibliography{grupos}{}
	\bibliographystyle{amsplain}
	
\end{document}